\documentclass[11pt]{article}
\usepackage{amssymb,amsmath,amsthm}
\usepackage[mathscr]{eucal}
\usepackage[cm]{fullpage}
\usepackage[english]{babel}
\usepackage[latin1]{inputenc}
\usepackage{float}
\usepackage{tikz}
\usepackage{xcolor}
\usepackage{mathrsfs}

\newtheorem{theorem}{Theorem}
\newtheorem{lemma}[theorem]{Lemma}
\newtheorem{proposition}[theorem]{Proposition}
\newtheorem{corollary}[theorem]{Corollary}

\newtheorem{conjecture}{Conjecture}

\def\im{\mathop{\mathrm{Im}}\nolimits} 
\def\dom{\mathop{\mathrm{Dom}}\nolimits}
\def\ker{\mathop{\mathrm{Ker}}\nolimits}
\def\PAP{\mathrm{PAP}}
\def\A{\mathrm{A}}
\def\B{\mathrm{B}}
\def\T{\mathrm{T}}
\def\I{\mathrm{I}}
\def\S{\mathrm{S}}
\def\C{\mathrm{C}}
\def\Z{\mathrm{Z}}
\def\D{\mathrm{D}}
\def\O{\mathrm{O}}
\def\M{\mathrm{M}}
\def\OP{\mathrm{OP}}
\def\OR{\mathrm{OR}}
\def\r{\mathbf{r}}
\def\rank{\mathrm{rank}}
\def\o{\mathbf{o}}
\def\X{\mathscr{X}}

\def\setdif{{\smallsetminus}}
\def\N{\mathbb{N}}

\def\smallt#1{\left(\begin{smallmatrix}#1\end{smallmatrix}\right)}
\def\transf#1{\left(\begin{matrix}#1\end{matrix}\right)}
\def\inv{\mathrm{inv}}

\title{%
Groups of permutations that are even on maximal proper subsets, and related monoids
}

\author{
\textbf{V\'{\i}tor Hugo Fernandes}\\
Center for Mathematics and Applications (NOVA Math) 
and Department of Mathematics,  \\ 
Faculdade de Ci\^{e}ncias e Tecnologia, 
Universidade Nova de Lisboa, 
2829-516 Caparica, 
Portugal\\ 
E-mail: vhf@fct.unl.pt\\
ORCID iD: https://orcid.org/0000-0003-1057-4975\\ 
}

\begin{document}

\maketitle

\begin{abstract}
Let $n$ be a positive integer and let $[n]=\{1,2,\ldots,n\}$. Let $\Gamma_n$ denote the group of permutations on $[n]$ whose restrictions to maximal proper subsets of $[n]$ are even, let $\Sigma_n$ denote the monoid of transformations on $[n]$ whose injective restrictions to maximal proper subsets of $[n]$ are even and let $\Delta_n$ denote the submonoid of $\Sigma_n$ generated by transformations of rank at least $n-1$. 
In this paper, we present descriptions of $\Gamma_n$, $\Delta_n$ and $\Sigma_n$, determine their cardinalities and ranks, and provide minimal generating sets for each of them.
\end{abstract}

\noindent{\small\it Keywords: \rm permutation groups, transformation semigroups, even partial permutations, minimal generators, rank.}  

\medskip 

\noindent{\small 2020 \it Mathematics subject classification: \rm 20M20, 20M10, 20B35.}

\section{Introduction} \label{sec:intro} 

For a positive integer $n$, let us denote the set $\{1,2,\ldots,n\}$ by $[n]$. 
Let $\T_n$ be the monoid of all full transformations on $[n]$, 
$\I_n$ the inverse monoid of all partial permutations (i.e. injective partial transformations) on $[n]$ 
and $\S_n$ the symmetric group on $[n]$. 
If $\alpha$ is a partial transformation on $[n]$, we denote the domain and image of $\alpha$ by $\dom(\alpha)$ and $\im(\alpha)$, respectively. 
Let $\alpha\in\I_n$. By an inversion of $\alpha$ we mean a pair $\{a,b\}\subseteq\dom(\alpha)$ such that $a<b$ and $a\alpha>b\alpha$. 
The set of inversions of $\alpha$ is denoted by $\inv(\alpha)$. We say that $\alpha$ is even [respectively, odd]  if $|\inv(\alpha)|$ is even [respectively, odd]. 
Obviously, this notion consistently extends to partial permutations the usual definition of even/odd permutations. 
Let us now recall a lemma proved in \cite{Fernandes&Vernitski:2026}, which we will also use in this paper. 

\begin{lemma}[{\cite[Lemma 2]{Fernandes&Vernitski:2026}}] \label{lema2} 
Let $\alpha,\beta\in\I_n$ be such that $\dom(\beta)=\im(\alpha)$. If $\alpha$ and $\beta$ are both even or both odd then $\alpha\beta$ is even. 
If one of $\alpha$ and $\beta$ is even and the other is odd then $\alpha\beta$ is odd.
\end{lemma}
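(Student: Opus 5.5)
The plan is to compare inversions of $\alpha$, $\beta$ and $\alpha\beta$ directly, pair by pair, and show that $|\inv(\alpha\beta)| \equiv |\inv(\alpha)|+|\inv(\beta)| \pmod 2$. This congruence gives both assertions of the lemma at once.

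First observe that, since $\dom(\beta)=\im(\alpha)$, we have $\dom(\alpha\beta)=\dom(\alpha)$. Moreover, $\alpha$ is a bijection from $\dom(\alpha)$ onto $\im(\alpha)=\dom(\beta)$. Hence the map $\{a,b\}\mapsto\{a\alpha,b\alpha\}$ is a bijection from the set of $2$-subsets of $\dom(\alpha)$ onto the set of $2$-subsets of $\dom(\beta)$.

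Now fix a $2$-subset $\{a,b\}\subseteq\dom(\alpha)$ with $a<b$, and consider its two images, first under $\alpha$ and then under $\beta$. The pair is an inversion of $\alpha\beta$ exactly when the order of $a,b$ is reversed an odd number of times along the way. More precisely, define
\[
\chi_\alpha(\{a,b\})=\begin{cases}1 & \text{if } \{a,b\}\in\inv(\alpha),\\ 0 & \text{otherwise,}\end{cases}
\qquad
\chi_\beta(\{a\alpha,b\alpha\})=\begin{cases}1 & \text{if } \{a\alpha,b\alpha\}\in\inv(\beta),\\ 0 & \text{otherwise.}\end{cases}
\]
Then $\{a,b\}\in\inv(\alpha\beta)$ if and only if
\[
\chi_\alpha(\{a,b\})+\chi_\beta(\{a\alpha,b\alpha\})=1 .
\]
This holds in each of the four cases: if neither step reverses the order, the order is preserved; if exactly one step reverses it, the order is reversed; if both do, it is restored.

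It follows that, modulo $2$,
\[
|\inv(\alpha\beta)|\equiv\sum_{\{a,b\}}\bigl(\chi_\alpha(\{a,b\})+\chi_\beta(\{a\alpha,b\alpha\})\bigr)=|\inv(\alpha)|+|\inv(\beta)|,
\]
where the sum runs over all $2$-subsets $\{a,b\}$ of $\dom(\alpha)$. The final equality uses the bijection of $2$-subsets noted above, which makes the second summand count $\inv(\beta)$ exactly once. Therefore $\alpha\beta$ is even precisely when $\alpha$ and $\beta$ have the same parity, and odd otherwise.

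There is no real obstacle here. The only point needing care is the hypothesis $\dom(\beta)=\im(\alpha)$: it guarantees that every pair in $\dom(\beta)$ arises as the image of some pair in $\dom(\alpha)$. Without it, $\beta$ could have inversions among elements outside $\im(\alpha)$, and those would not be accounted for in $\inv(\alpha\beta)$.
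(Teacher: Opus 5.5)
Your proof is correct and complete. The case check showing that $\{a,b\}\in\inv(\alpha\beta)$ exactly when $\chi_\alpha(\{a,b\})+\chi_\beta(\{a\alpha,b\alpha\})=1$ is right, since injectivity of $\alpha\beta$ rules out ties. The fact that $s\in\{0,1,2\}$ is odd exactly when $s=1$ then gives the congruence $|\inv(\alpha\beta)|\equiv|\inv(\alpha)|+|\inv(\beta)| \pmod 2$. You also use the hypothesis $\dom(\beta)=\im(\alpha)$ exactly where it is needed: it makes $\{a,b\}\mapsto\{a\alpha,b\alpha\}$ a bijection onto the $2$-subsets of $\dom(\beta)$.

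The paper gives no proof of this lemma. It imports it from the earlier paper of Fernandes and Vernitski, so there is no argument in the text to compare yours with.

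A natural alternative route is to relabel. With $k=|\dom(\alpha)|$, compose $\alpha$ and $\beta$ with the order-preserving bijections between $[k]$ and $\dom(\alpha)$, $\im(\alpha)$, $\im(\beta)$. This gives permutations $\bar\alpha,\bar\beta\in\S_k$ with the same numbers of inversions as $\alpha,\beta$, and $\bar\alpha\bar\beta=\overline{\alpha\beta}$. One then quotes multiplicativity of the sign on $\S_k$. Your direct pair-counting avoids this bookkeeping and in effect reproves multiplicativity of the sign in the partial setting, so it is self-contained.
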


For $1\leqslant t\leqslant n$, define $\Gamma_n^t$ as the set consisting of those permutations on $[n]$ whose every restriction of width $t$ (i.e. to a $t$-element subset of $[n]$) is even. It is clear, due to Lemma \ref{lema2}, that the set $\Gamma_n^t$ is a subgroup of $\S_n$. Moreover, $\Gamma_n^1=\S_n$ and $\Gamma_n^n$ is the alternating group $\A_n$ on $[n]$. 
Let us consider the following permutations of $[n]$: 
the identity transformation $\iota_n$ on $[n]$;
the $n$-cycle $\sigma_n=\smallt{1&2&\cdots&n-1&n\\2&3&\cdots&n&1}$; and 
the reflexion $\rho_n=\smallt{1&2&\cdots&n-1&n\\n&n-1&\cdots&2&1}$. 
Along with the two-row notation for transformations, for convenience, we will also use in this paper the cycle notation for permutations. 
For instance, $\sigma_n=(1,2,\ldots,n)$. 
Let $\Z_2$ be the $2$-element group $\{\iota_n,\rho_n\}$, $\C_n$ the cyclic group of order $n$ generated by $\sigma_n$ and   
$\D_{2n}$ the dihedral group of order $2n$ generated by $\sigma_n$ and $\rho_n$. 

Interestingly, the groups $\Gamma_2,\ldots,\Gamma_{n-2}$ exhibit the following periodic behavior \cite[Theorem 7]{Fernandes&Vernitski:2026}: 
for $2\leqslant t \leqslant n-2$, 
if $t = 2  \,(\mathrm{mod} \ 4)$, then $\Gamma_n^t$ is the trivial group $\{\iota_n\}$;
if $t = 3  \,(\mathrm{mod} \, 4)$, then $\Gamma_n^t=\C_n$;
if $t = 0  \,(\mathrm{mod} \, 4)$, then $\Gamma_n^t=\Z_2$; and 
if $t = 1  \,(\mathrm{mod} \, 4)$, then $\Gamma_n^t=\D_{2n}$. 

On the other hand, the situation of the group $\Gamma_n^{n-1}$ is quite different. 
This case is related to the group $\PAP_n$ of so-called parity-alternating permutations of $[n]$, i.e. permutations $\sigma\in\S_n$ such that 
$a\sigma$ and $(a+1)\sigma$ have different parities for all $1\leqslant a\leqslant n-1$ \cite{tanimoto2010combinatorics,tanimoto2010parity}. 
Observe that, $\PAP_n=\{\sigma\in\S_n\mid  [n]_\mathrm{o}\sigma= [n]_\mathrm{o}\}=\{\sigma\in\S_n\mid  [n]_\mathrm{e}\sigma= [n]_\mathrm{e}\}$, if $n$ is odd, and 
$\PAP_n=\{\sigma\in\S_n\mid\mbox{$ [n]_\mathrm{o}\sigma= [n]_\mathrm{o}$ or $ [n]_\mathrm{o}\sigma= [n]_\mathrm{e}$}\}=\{\sigma\in\S_n\mid\mbox{$ [n]_\mathrm{e}\sigma= [n]_\mathrm{o}$ or $ [n]_\mathrm{e}\sigma= [n]_\mathrm{e}$}\}$, if $n$ is even, 
where $ [n]_\mathrm{o}=\{a\in[n]\mid\mbox{$a$ is odd}\}$ and $ [n]_\mathrm{e}=\{a\in[n]\mid\mbox{$a$ is even}\}$. 
It is clear that the size of $\PAP_n$ is $2(\frac{n}{2})!^2$, if $n$ is even, and $(\frac{n-1}{2})!(\frac{n+1}{2})!$, if $n$ is odd. 

Let $X_a=[n]\setdif\{a\}$ for $a\in[n]$. For each subset $X$ of $[n]$ and $\alpha\in\T_n$, denote the restriction of $\alpha$ to $X$ by $\alpha_{|X}$. 
So, $\Gamma_n^{n-1}$ consists of all permutations $\sigma$ on $[n]$ such that $\sigma_{|X_a}$ is even for all $a \in [n]$. 
Let us consider the \textit{opposite} set $\text{-}\Gamma_n^{n-1}$ of $\Gamma_n^{n-1}$ 
of all  permutations $\sigma$ on $[n]$ such that $\sigma_{|X_a}$ is odd for all $a \in [n]$.
Then, $\PAP_n$ is the disjoint union of $\Gamma_n^{n-1}$ and $\text{-}\Gamma_n^{n-1}$ \cite[Lemma 10]{Fernandes&Vernitski:2026} 
(notice that, this means that, if $\sigma\in\PAP_n$, then $\sigma\in\Gamma_n^{n-1}$ if and only if $\sigma_{|X_a}$ is even for \textit{some} $a\in[n]$) and so, 
in particular, $\Gamma_n^{n-1}\subseteq\PAP_n$; 
$|\Gamma_n^{n-1}| = \frac{1}{2} |\PAP_n|$ for $n \geqslant 3$ \cite[Theorem 11]{Fernandes&Vernitski:2026}; and 
if $n$ is odd, then $\Gamma_n^{n-1} = \PAP_n \cap \A_n$ \cite[Proposition 12]{Fernandes&Vernitski:2026}. 
If $n$ is even, a description of $\Gamma_n^{n-1}$ will be presented in Section \ref{gamma} of this paper. 

We would like to point out that the groups $\Gamma_n^t$, along with the properties we have just recalled here, 
were first presented by A. Vernitski in an early version of the paper \cite{Fernandes&Vernitski:2026}.

Now, for each $1\leqslant t\leqslant n$, let us consider the submonoid $\Sigma_n^t$ of $\T_n$ consisting of those transformations on $[n]$ 
whose all injective restrictions of width $t$ are even. 
Before reviewing the properties of these monoids studied in \cite{Fernandes&Vernitski:2026}, we need to introduce some notation. 

Denote the rank of a transformation $\alpha\in\T_n$, i.e. the size of its image set, by $\r(\alpha)$. 
Notice that, in turn, we denote the \textit{rank} of a semigroup, monoid, or group $S$ (i.e., its minimum number of generators) by $\rank(S)$. 
For a subset $S$ of $\T_n$ and $1\leqslant k\leqslant n$, let $S(\r=k)$ [respectively, $S(\r  \leqslant k)$, $S(\r  \geqslant k)$] be 
the subset of $S$ consisting of those mappings whose rank is equal to $k$ [respectively, is less than or equal to $k$, is greater than or equal to $k$]. 

From the definition, it is immediately clear that $\Sigma_n^1=\T_n$, $\Sigma_n^n=\T_n(\r  \leqslant n-1)\cup \A_n$, 
$\Gamma_n^t=\Sigma_n^t\cap \S_n$, i.e. $\Gamma_n^t$ is the group of units of $\Sigma_n^t$, for $1\leqslant t\leqslant n$, and  
$\Sigma_n^t(\r  \leqslant t-1) = \T_n(\r  \leqslant t-1)$ for $2\leqslant t\leqslant n$ \cite[Proposition 13]{Fernandes&Vernitski:2026}. 
What is not quite as obvious is the way these monoids embed into one another: for $2\leqslant p,q\leqslant n$,  
$\Sigma_n^p\subseteq\Sigma_n^q$ if and only if $p\leqslant q$ and 
at least one of the following conditions is true, $p = 2  \,(\mathrm{mod} \ 4)$ or $q = 1  \,(\mathrm{mod} \, 4)$ or $p = q  \, (\mathrm{mod} \, 4)$
\cite[Theorem 20]{Fernandes&Vernitski:2026}. 

Next, recall that a transformation $\alpha\in\T_n$ is said to be order-preserving 
[respectively, order-reversing] if $a\leqslant b$ implies $a\alpha\leqslant b\alpha$
[respectively, $a\alpha\geqslant b\alpha$] for all $a,b \in [n]$, 
and it is said to be orientation-preserving [respectively, orientation-reversing]
if there exists no more than one $a\in [n]$ such that
$a\alpha >(a+1)\alpha$ [respectively, $a\alpha<(a+1)\alpha$],
where $n+1$ denotes $1$ here. 
A transformation on $[n]$ that is order-preserving or order-reversing 
[respectively, orientation-preserving or orientation-reversing] is also said to be monotone [respectively, oriented]. 
Let us denote by $\O_n$ [respectively, $\M_n$, $\OP_n$ and $\OR_n$]  
the submonoid of $\T_n$ 
of all order-preserving [respectively, monotone, order-preserving, oriented] transformations. 
Semigroups of monotone, order-preserving, orientation-preserving and oriented transformations 
have been studied massively for more than six decades. 
See, for example, \cite{AR,Aizenstat:1962,Aizenstat:1962b,Catarino&Higgins:1999,
Fernandes:1997,Fernandes:2002a,fernandes2023CKMS,Fernandes&Gomes&Jesus:2006,Fernandes&al:2010,Fernandes&Volkov:2010,
Gomes&Howie:1992,Higgins:1995,higgins2022orientation,Howie:1971,Laradji&Umar:2006,McA,Vernitskii&Volkov:1995}. 
 As we will see below, these monoids are closely related to the monoids $\Sigma_n^2,\ldots,\Sigma_n^{n-2}$.

In fact, the equalities $\Sigma_n^2=\O_n$ \cite[Proposition 14]{Fernandes&Vernitski:2026} and $\Sigma_n^3 = \T_n(\r  \leqslant 2) \cup \OP_n$ \cite[Proposition 17]{Fernandes&Vernitski:2026} give us the closest relationships. In addition, for $1\leqslant t\leqslant n$, we have $\O_n\subseteq\Sigma_n^t$, and 
for $2\leqslant t\leqslant n-2$, we have: $\OP_n\subseteq\Sigma_n^t$, if $t$ is odd; $\M_n\subseteq\Sigma_n^t$, if $t = 0  \,(\mathrm{mod} \, 4)$ or 
$t = 1  \,(\mathrm{mod} \, 4)$; and $\OR_n\subseteq\Sigma_n^t$, if $t = 1  \,(\mathrm{mod} \, 4)$ \cite[Proposition 16]{Fernandes&Vernitski:2026}. 
Furthermore, for $2\leqslant t\leqslant n-2$, we also have \cite[Proposition 21]{Fernandes&Vernitski:2026}: 
$\Sigma_n^t(\r  \geqslant t+2)  = \O_n(\r  \geqslant t+2)$, if $t = 2  \,(\mathrm{mod} \, 4)$; 
$\Sigma_n^t(\r  \geqslant t+2) = \OP_n(\r  \geqslant t+2)$, if $t = 3  \,(\mathrm{mod} \, 4)$;  
$\Sigma_n^t(\r  \geqslant t+2) = \M_n(\r  \geqslant t+2)$, if $t = 0  \,(\mathrm{mod} \, 4)$; and 
$\Sigma_n^t(\r  \geqslant t+2) = \OR_n(\r  \geqslant t+2)$, if $t = 1  \,(\mathrm{mod} \, 4)$. 
Notice that, for $2\leqslant t\leqslant n-2$, describing sets $\Sigma_n^t(\r  = t)$ and $\Sigma_n^t(\r  = t+1)$ remains an open problem. 
Nevertheless, the properties listed above led the authors to consider the following alternative family of monoids. 

For $1\leqslant t\leqslant n$, let $\Delta_n^t$ be the monoid generated by $\Sigma_n^t(\r  \geqslant n-1)$. Clearly, as well as for $\Sigma_n^t$, 
the group of units of the  monoid $\Delta_n^t$ is the group $\Gamma_n^t$, for $1\leqslant t\leqslant n$. 
We have $\Delta_n^1=\Sigma_n^1=\T_n$ (since $\T_n$ is generated by its transformations of rank greater than or equal to $n-1$) 
and $\Delta_n^n=\Sigma_n^n=\T_n(\r  \leqslant n-1)\cup \A_n$. 
But the most interesting fact is that the monoids $\Delta_n^2,\ldots,\Delta_n^{n-3}$ exhibit periodic behavior similar to that of the groups $\Gamma_n^2,\ldots,\Gamma_n^{n-2}$. Indeed, for $2\leqslant t\leqslant n-3$, we have \cite[Theorem 22]{Fernandes&Vernitski:2026}: 
$\Delta_n^t=\O_n$, if $t = 2  \,(\mathrm{mod} \, 4)$;  
$\Delta_n^t=\OP_n$, if $t = 3  \,(\mathrm{mod} \, 4)$;  
$\Delta_n^t=\M_n$, if $t = 0  \,(\mathrm{mod} \, 4)$; and 
 $\Delta_n^t=\OR_n$, if $t = 1  \,(\mathrm{mod} \, 4)$. 

Notice that, we have not mentioned anything non-immediate about the monoids $\Delta_n^{n-2}$ and $\Delta_n^{n-1}$, 
nor about the monoids $\Sigma_n^{n-2}$ and $\Sigma_n^{n-1}$. 
In fact, the descriptions of these monoids were left as an open problem in \cite{Fernandes&Vernitski:2026}. 

This paper is specifically devoted to the monoids $\Delta_n^{n-1}$ and $\Sigma_n^{n-1}$, as well as their group of units $\Gamma_n^{n-1}$. 
In Section \ref{gamma}, we begin by describing the group $\Gamma_n^{n-1}$ when $n$ is even, 
then proceed to obtain a set of minimal generators (i.e. a generating set of minimum size) for $\Gamma_n^{n-1}$ and determine its rank. 
In Section \ref{deltasigma}, we present complete descriptions of $\Delta_n^{n-1}$ and $\Sigma_n^{n-1}$ and calculate their cardinalities. 
Finally, in Section \ref{rankdeltasigma}, we present a minimal generating set and determine the ranks of the monoids $\Delta_n^{n-1}$ and $\Sigma_n^{n-1}$. 

From now on, for convenience, we denote $\Gamma_n^{n-1}$, $\Delta_n^{n-1}$ and $\Sigma_n^{n-1}$ simply by  $\Gamma_n$, $\Delta_n$ and $\Sigma_n$, respectively. 

It is easy to see that 
\begin{itemize}

\item $\Gamma_1=\PAP_1=\Delta_1=\Sigma_1=\T_1=\{\iota_1\}$;

\item $\Gamma_2=\PAP_2=\S_2$ and $\Delta_2=\Sigma_2=\T_2$; 

\item $\PAP_3=\left\{\iota_3,\smallt{1&2&3\\3&2&1}\right\}$, $\Gamma_3=\{\iota_3\}$ and 
\begin{align*}
\Delta_3=\Sigma_3=\left\{ 
\iota_3, \smallt{1&2&3\\1&2&2}, \smallt{1&2&3\\1&3&3}, \smallt{1&2&3\\2&3&3},
\smallt{1&2&3\\1&1&2}, \smallt{1&2&3\\1&1&3}, \smallt{1&2&3\\2&3&3}, 
\smallt{1&2&3\\1&1&1}, \smallt{1&2&3\\2&2&2}, \smallt{1&2&3\\3&3&3}
\right\}. 
\end{align*}
We have $|\Delta_3|=10$ and (as a monoid) $\Delta_3$ has rank $3$, 
where for exemple $\left\{\smallt{1&2&3\\1&2&2}, \smallt{1&2&3\\1&1&3}, \smallt{1&2&3\\2&3&3}\right\}$ is a minimal generating set. 
This statement can be easily verified through extensive calculations using an appropriate computational tool. 
\end{itemize}
So, from this point forward, we will focus our attention primarily on $n\geqslant4$. 

For general background on Semigroup
Theory and standard notation, we refer the reader to Howie's book \cite{Howie:1995}. 
We would also like to mention the use of computational tools, namely GAP \cite{GAP} and its package \cite{sgpviz}.

\section{The groups $\Gamma_n$}\label{gamma}

In this section, our main objectives are to provide a description of $\Gamma_n$, 
particularly for the previously missing case of an even $n$, 
to discuss minimal generating sets, and to determine the rank of $\Gamma_n$.
For convenience, we will introduce and study some properties of other subgroups of $\S_n$.  

Let 
$$
\PAP_n^+=\{\sigma\in\S_n\mid  [n]_\mathrm{o}\sigma= [n]_\mathrm{o}\}=\{\sigma\in\PAP_n\mid 1\sigma\in [n]_\mathrm{o}\}
$$
and
$$
\PAP_n^-=\{\sigma\in\S_n\mid [n]_\mathrm{o}\sigma= [n]_\mathrm{e}\}=\{\sigma\in\PAP_n\mid 1\sigma\in [n]_\mathrm{e}\}.
$$
Then, obviously, $\PAP_n$ is a disjoint union of $\PAP_n^+$ and $\PAP_n^-$. In addition, 
for an odd $n$, $\PAP_n^+(=\PAP_n$, since $\PAP_n^-=\emptyset$) is isomorphic to $\S_{\frac{n+1}{2}}\times\S_{\frac{n-1}{2}}$ and, 
for an even $n$, $\PAP_n^+$ is a subgroup of $\PAP_n$ isomorphic to $\S_{\frac{n}{2}}\times\S_{\frac{n}{2}}$. 
Let $\Gamma_n^+=\PAP_n^+\cap\A_n$ and $\Gamma_n^-=\PAP_n^-\cap(\S_n\setdif\A_n)$. 
Observe that, for an odd $n$, as $\PAP_n^-=\emptyset$, we also have $\Gamma_n^-=\emptyset$. 
The following result completes and generalizes \cite[Proposition 12]{Fernandes&Vernitski:2026}. 

\begin{theorem}\label{gama_n}
For any positive integer $n$, $\Gamma_n=\{\sigma\in\PAP_n\mid \sigma\in\A_n\Leftrightarrow \mbox{$1\sigma$ is odd}\}$ 
and is a disjoint union of $\Gamma_n^+$ and $\Gamma_n^-$.  
\end{theorem}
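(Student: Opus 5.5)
Since $\Gamma_n\subseteq\PAP_n$ (by \cite[Lemma 10]{Fernandes&Vernitski:2026}), and since for $\sigma\in\PAP_n$ we have $\sigma\in\Gamma_n$ if and only if $\sigma_{|X_a}$ is even for \emph{some} $a\in[n]$, the plan is to pick a convenient $a$ and compute the parity of $\sigma_{|X_a}$ from the parity of $\sigma$. The natural choice is $a=1$. For any $\sigma\in\S_n$, the inversions of $\sigma$ split into those not involving $1$, which are exactly the inversions of $\sigma_{|X_1}$, and those of the form $\{1,b\}$ with $b>1$ and $1\sigma>b\sigma$. The number of the latter equals the number of values smaller than $1\sigma$, namely $1\sigma-1$. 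Hence
$$|\inv(\sigma)|=|\inv(\sigma_{|X_1})|+1\sigma-1,$$
so $\sigma_{|X_1}$ is even if and only if the parity of $\sigma$ agrees with that of $1\sigma-1$. Equivalently, $\sigma_{|X_1}$ is even if and only if either $\sigma\in\A_n$ and $1\sigma$ is odd, or $\sigma\notin\A_n$ and $1\sigma$ is even. This is precisely the condition ``$\sigma\in\A_n\Leftrightarrow 1\sigma$ is odd''.

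Combining this with the reduction above gives $\Gamma_n=\{\sigma\in\PAP_n\mid \sigma\in\A_n\Leftrightarrow 1\sigma \mbox{ is odd}\}$. For the disjoint union, I would use $\PAP_n=\PAP_n^+\,\dot\cup\,\PAP_n^-$ together with the characterisations $\PAP_n^{+}=\{\sigma\in\PAP_n\mid 1\sigma\mbox{ odd}\}$ and $\PAP_n^{-}=\{\sigma\in\PAP_n\mid 1\sigma\mbox{ even}\}$. Elements of $\PAP_n^+$ satisfy the condition exactly when they lie in $\A_n$, which gives $\Gamma_n^+$. Elements of $\PAP_n^-$ satisfy it exactly when they are odd, which gives $\Gamma_n^-$. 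The two pieces are disjoint because $\PAP_n^+$ and $\PAP_n^-$ are.

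The only step needing care is the inversion count involving $1$, and it is immediate once one notes that every $b\neq 1$ satisfies $b>1$. The rest is bookkeeping, so I do not expect any real obstacle. The small cases $n\leqslant 2$ fit the same argument, since Lemma 10 is stated for all $n$; for example, for $n=1$ the identity has $1\sigma=1$ odd and lies in $\A_1$. If needed, these cases can also be checked directly.
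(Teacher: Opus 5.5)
Your proof is correct, but it takes a genuinely different route from the paper. The paper handles odd $n$ by quoting the earlier result $\Gamma_n=\PAP_n\cap\A_n$. For even $n$ it transfers that result to $\PAP_n^+$ by extending $\sigma$ to $\hat\sigma\in\S_{n+1}$ with $(n+1)\hat\sigma=n+1$. It then treats $\PAP_n^-$ by multiplying by $\pi=(1,2)(3,4)\cdots(n-1,n)$, with separate cases $n=0$ and $n=2\,(\mathrm{mod}\,4)$ according to whether $\pi$ lies in $\A_n$ or in $\Gamma_n$.

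You replace all of this with the single identity $|\inv(\sigma)|=|\inv(\sigma_{|X_1})|+1\sigma-1$, valid for every $\sigma\in\S_n$. Beyond that you use only the ingredient both proofs share: Lemma 10, which lets membership in $\Gamma_n$ be tested on $X_1$ for $\sigma\in\PAP_n$. This treats all $n$ uniformly, needs no case analysis, and reproves the odd case instead of assuming it. The paper's route mainly buys reuse of known results and an explicit link between $\Gamma_n$ and $\Gamma_{n+1}$.

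Your counting also extends to make the argument fully self-contained. For an arbitrary $a$, the inversions of $\sigma$ involving $a$ are indexed by the symmetric difference of $\{b\neq a\mid b<a\}$ and $\{b\neq a\mid b\sigma<a\sigma\}$. Hence $|\inv(\sigma_{|X_a})|\equiv|\inv(\sigma)|+a+a\sigma\ (\mathrm{mod}\,2)$. From this, both $\Gamma_n\subseteq\PAP_n$ and the constancy of the parity of $\sigma_{|X_a}$ on $\PAP_n$ (Lemma 10) follow at once.
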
 
\begin{proof}
If $n$ is odd, then the result follows immediately from \cite[Proposition 12]{Fernandes&Vernitski:2026}. So, let us suppose that $n$ is even. 
For $\sigma\in\S_n$, define $\hat{\sigma}\in\S_{n+1}$ by $i\hat{\sigma}=i\sigma$ for $i\in[n]$ and $(n+1)\hat{\sigma}=n+1$.  
Let $\sigma\in\PAP_n^+$. Then, clearly, $\hat{\sigma}\in\PAP_{n+1}$, $\inv(\hat{\sigma})=\inv(\sigma)$ and 
$\inv(\hat{\sigma}_{|X_i\cup\{n+1\}})=\inv(\sigma_{|X_i})$ for any $i\in[n]$. 
One the other hand, as $n+1$ is odd, $\Gamma_{n+1}=\PAP_{n+1}\cap\A_{n+1}$. 
Hence, 
$$
\sigma\in\A_n\Longleftrightarrow \hat{\sigma}\in\A_{n+1} \Longleftrightarrow \hat{\sigma}\in\Gamma_{n+1}
\Longleftrightarrow \mbox{$\hat{\sigma}_{|X_1\cup\{n+1\}}$ is even} \Longleftrightarrow \mbox{$\sigma_{|X_1}$ is even} 
\Longleftrightarrow \sigma\in\Gamma_n. 
$$

Now, suppose that $\sigma\in\PAP_n^-$. Let $\pi=(1,2)(3,4)\cdots(n-1,n)$. Then, $\pi\in\PAP_n^-$, $\pi$ is a composition of $\frac{n}{2}$ transpositions and 
$\pi_{|X_1}$ has $\frac{n}{2}-1$ inversions.  Hence, $\pi\in\A_n$ if and only if $n=0\,(\mathrm{mod}\,4)$, and $\pi\in\Gamma_n$ if and only if $n=2\,(\mathrm{mod}\,4)$.
On the other hand, $\sigma\pi\in\PAP_n^+$ and so, as we have shown above, 
$$
\begin{array}{rl}
\sigma\pi\in\Gamma_n  &\Longleftrightarrow \sigma\pi\in\A_n \Longleftrightarrow \mbox{$\sigma,\pi\in\A_n$ or $\sigma,\pi\not\in\A_n$} \\ 
& \Longleftrightarrow \mbox{($\sigma\in\A_n$ and $n=0\,(\mathrm{mod}\,4)$) or ($\sigma\not\in\A_n$ and $n=2\,(\mathrm{mod}\,4)$).}
\end{array}
$$
Let us consider each of the two cases for $n$ mentioned just behind. 

\noindent{\sc case 1}. $n=0\,(\mathrm{mod}\,4)$. In this case, we have $\pi\in\A_n$ and $\pi\not\in\Gamma_n$. 
If $\sigma\in\A_n$, then $\sigma\pi\in\Gamma_n$ and so $\sigma\not\in\Gamma_n$. 
Conversely, suppose that $\sigma\not\in\Gamma_n$. Then, $\sigma_{|X_1}$ is odd. 
Since $\pi_{|X_1\sigma}$ is also odd, by \ref{lema2},  
$(\sigma\pi)_{|X_1}=\sigma_{|X_1}\pi_{|X_1\sigma}$ is even, whence $\sigma\pi\in\Gamma_n$ and so $\sigma\in\A_n$. 

\noindent{\sc case 2}. $n=2\,(\mathrm{mod}\,4)$. Now, we have $\pi\not\in\A_n$ and $\pi\in\Gamma_n$. 
If $\sigma\in\Gamma_n$, then $\sigma\pi\in\Gamma_n$ and so $\sigma\not\in\A_n$. 
Conversely, suppose that $\sigma\not\in\A_n$. Then, $\sigma\pi\in\Gamma_n$ and so $\sigma=(\sigma\pi)\pi^{-1}\in\Gamma_n$. 

Thus, for any $\sigma\in\PAP_n^-$, we have $\sigma\in\Gamma_n$ if and only if $\sigma\not\in\A_n$, which concludes the proof. 
\end{proof} 

\bigskip 

Although our focus is on the groups $\Gamma_n$, it will be useful to consider the groups we will define below. 

In what follows, it is convenient to adopt the following notations: 
if $X$ is a non-empty set, then $\S(X)$ and $\A(X)$ denote the symmetric group and the alternating group on $X$, respectively. 
For a non-empty subset $Y$ of $X$, it is also convenient to identify the elements of $\S(Y)$ with the elements of its natural injection into $\S(X)$. 
Let $[n']=\{1',2',\ldots,n'\}$ be a set of $n$-elements (i.e. a copy of $[n]$). 
Let $m,n\geqslant2$ and define 
$$
\S_{m\oplus n}=\{\sigma\in\S([m]\cup[n'])\mid[m]\sigma=[m]\}
=\{\sigma\in\S([m]\cup[n'])\mid[n']\sigma=[n']\}.
$$ 
Then, $\S_{m\oplus n}$ is a subgroup of $\S([m]\cup[n'])$ isomorphic to the direct product $\S_m\times\S_n$. 
Define also 
$$
\Gamma_{m\oplus n}=\S_{m\oplus n}\cap\A([m]\cup[n']).
$$
Given the identification we established earlier, it is clear that 
\begin{equation}\label{gmn} 
\Gamma_{m\oplus n}=\A([m])\A([n'])\cup(\S([m])\setdif\A([m]))(\S([n'])\setdif\A([n'])), 
\end{equation}
$\A([m]),\A([n'])\subseteq\Gamma_{m\oplus n}$ and $|\Gamma_{m\oplus n}|=\frac{m!n!}{2}$. 
Moreover, $\Gamma_{m\oplus n}$ and $\Gamma_{n\oplus m}$ (just like $\S_{m\oplus n}$ and $\S_{n\oplus m}$) are naturally isomorphic groups. 
Furthermore,  it is clear that, for $n\geqslant4$, if $n$ is odd, then $\Gamma_n$ is isomorphic to $\Gamma_{\frac{n+1}{2}\oplus\frac{n-1}{2}}$ and,  
if $n$ is even, then $\Gamma_n^+$ is isomorphic to $\Gamma_{\frac{n}{2}\oplus\frac{n}{2}}$. 

Recall that $\S_n$ is generated by $\{(1,2),(1,2,\ldots,n)\}$ and also by  $\{(1,2),(2,3,\ldots,n)\}$. 
Let $\lambda_0=(1,2)\in\S([m])$, $\lambda'_0=(1',2')\in\S([n'])$ and 
$\lambda=\lambda_0\lambda'_0$. Then, $\lambda\in\Gamma_{m\oplus n}$. 
Further, let 
$$
\mu=\left\{\begin{array}{ll}
(1,2,\ldots,m) & \mbox{if $m$ is odd}\\
(2,3,\ldots,m) & \mbox{if $m$ is even}\\
\end{array}\right.
~\mbox{and}~
\mu'=\left\{\begin{array}{ll}
(1',2',\ldots,n') & \mbox{if $n$ is odd}\\
(2',3',\ldots,n') & \mbox{if $n$ is even.}\\
\end{array}\right. 
$$
Then, $\mu\in\A([m])$ and $\mu'\in\A([n'])$, whence $\mu,\mu'\in\Gamma_{m\oplus n}$. 

Let us denote the order of a permutation $\sigma$ by $\o(\sigma)$. 

\begin{proposition}\label{gamamn}
Let $m,n\geqslant2$. Then, $\Gamma_{m\oplus n}=\langle\lambda,\mu,\mu'\rangle$. 
\end{proposition}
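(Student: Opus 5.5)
The plan is to prove the two inclusions separately, and to obtain the nontrivial one from the decomposition \eqref{gmn}. Write $G=\langle\lambda,\mu,\mu'\rangle$. We already know that $\lambda,\mu,\mu'\in\Gamma_{m\oplus n}$, so $G\subseteq\Gamma_{m\oplus n}$. For the reverse inclusion, note that $\lambda$ is the product of an odd permutation of $[m]$ and an odd permutation of $[n']$. Hence \eqref{gmn} can be rewritten as
$$\Gamma_{m\oplus n}=\A([m])\A([n'])\cup\lambda\,\A([m])\A([n']).$$
It therefore suffices to show that $\A([m])\subseteq G$ and $\A([n'])\subseteq G$. By the symmetry between $[m]$ and $[n']$, it is enough to treat $\A([m])$.

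The key device is conjugation of $\lambda$ by powers of $\mu$. Since $\mu$ fixes every element of $[n']$, we get
$$\mu^{-k}\lambda\mu^{k}=(1\mu^k,2\mu^k)(1',2').$$
Since $(1',2')^2=\iota$ and $(1',2')$ commutes with $\S([m])$, it follows that
$$\lambda\cdot\mu^{-k}\lambda\mu^{k}=(1,2)(1\mu^k,2\mu^k)\in G.$$
This element lies in $\S([m])$, so the primed part has been removed.

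We then split according to the parity of $m$.
\begin{itemize}
\item If $m$ is odd, then $\mu=(1,2,\ldots,m)$. Taking $k=1$ gives $(1,2)(2,3)$, which is a $3$-cycle on $\{1,2,3\}$. Conjugating this $3$-cycle by powers of $\mu$ gives all the consecutive $3$-cycles $(i,i+1,i+2)$, with indices taken mod $m$.
\item If $m$ is even, then $\mu=(2,3,\ldots,m)$ fixes $1$. The conjugates of $(1,2)$ are then $(1,j)$ for $2\leqslant j\leqslant m$. The products $(1,2)(1,j)$ are the $3$-cycles on $\{1,2,j\}$ for $3\leqslant j\leqslant m$.
\end{itemize}

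In both cases we then invoke the standard fact that $\A_m$ is generated by the $3$-cycles $(1,2,j)$, $3\leqslant j\leqslant m$. It is equally generated by the $3$-cycles $(i,i+1,i+2)$, $1\leqslant i\leqslant m-2$. This gives $\A([m])\subseteq G$. The degenerate case $m=2$ is trivial, since $\A([2])$ is the trivial group; for $m=3$ the odd case applies directly.

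The main obstacle is bookkeeping rather than depth. One must check that the primed factors really cancel, which uses that $\mu$ acts trivially on $[n']$ and that $(1',2')$ commutes with everything in $\S([m])$. One must also check that the $3$-cycles produced cover a generating family of $\A_m$ for the small values of $m$. The symmetric argument, with $\mu'$ in place of $\mu$, gives $\A([n'])\subseteq G$. Combining this with $\lambda\in G$ and the rewritten form of \eqref{gmn} yields $\Gamma_{m\oplus n}\subseteq G$, which completes the proof.
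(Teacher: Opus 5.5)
Your proof is correct, but it takes a different route from the paper's.

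\textbf{The paper's route.} It takes an arbitrary $\sigma=\theta\theta'\in\Gamma_{m\oplus n}$ and uses only $\S([m])=\langle\lambda_0,\mu\rangle$ to write $\theta$ as a word in $\lambda_0$ and $\mu$. It then replaces every $\lambda_0$ by $\lambda$. Because $\lambda'_0$ commutes with $\lambda_0$ and $\mu$, the new word equals $\theta{\lambda'_0}^{r_1+r_2+r-1}$, which is $\theta$ or $\theta\lambda'_0$ according to the parity of $\theta$. The same is done for $\theta'$, and the two words are concatenated. In the odd--odd case an extra $\lambda$ absorbs the stray factor $\lambda'_0\lambda_0$.

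\textbf{Your route.} You observe that $\lambda\cdot\mu^{-k}\lambda\mu^{k}$ is a commutator in which the primed transposition cancels, because $\mu$ fixes $[n']$. These elements give $3$-cycles that generate $\A([m])$, and symmetrically $\A([n'])$. You then finish with the coset decomposition $\Gamma_{m\oplus n}=\A([m])\A([n'])\cup\lambda\,\A([m])\A([n'])$, which holds since $\lambda'_0$ commutes with $\A([m])$.

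\textbf{Comparison.} The paper's argument is uniform in the parities of $m$ and $n$. It needs only the generation of a symmetric group by a transposition and a cycle, and it yields explicit words for every element. Your argument needs a parity case split and the (equally standard) generation of $\A_m$ by $3$-cycles. In exchange, it gives sharper structural information: $\langle\lambda,\mu\rangle$ alone already contains $\A([m])$, and $\Gamma_{m\oplus n}$ is generated by $\A([m])\A([n'])$ together with the single element $\lambda$.

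\textbf{Cosmetic point.} Under the paper's right-action convention, $(1,2)(2,3)=(1,3,2)$. So the conjugates you obtain in the odd case are $(i,i+2,i+1)=(i,i+1,i+2)^{-1}$ rather than $(i,i+1,i+2)$. This does not change the generated subgroup.
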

\begin{proof}
Let $\sigma\in\Gamma_{m\oplus n}$. 
It suffices to show that $\sigma\in \langle\lambda,\mu,\mu'\rangle$. In view of (\ref{gmn}), 
$\sigma=\theta\theta'$ for some ($\theta\in\A([m])$ and $\theta'\in\A([n'])$) or 
($\theta\in\S([m])\setdif\A([m])$ and $\theta'\in\S([n'])\setdif\A([n'])$). 
In particular, $\theta\in\S([m])=\langle \lambda_0,\mu\rangle$ and $\theta'\in\S([n'])=\langle \lambda'_0,\mu'\rangle$. 
Hence, $\theta=\lambda_0^{r_1}\mu^{k_1}\lambda_0\mu^{k_2}\lambda_0\cdots\lambda_0\mu^{k_r}\lambda_0^{r_2}$ for some 
$r\geqslant0$, $1\leqslant k_1,\ldots,k_r\leqslant\o(\mu)$ and $0\leqslant r_1,r_2\leqslant 1$. 
Observe that, as $\mu\in\A([m])$, then $\theta\in\A([m])$ if and only if $r_1+r_2+r-1$ is even. 
Hence, 
$$
\lambda^{r_1}\mu^{k_1}\lambda\mu^{k_2}\lambda\cdots\lambda\mu^{k_r}\lambda^{r_2} = \theta{\lambda'_0}^{r_1+r_2+r-1}
= \left\{\begin{array}{ll} 
\theta & \mbox{if $\theta\in\A([m])$}\\ 
\theta\lambda'_0 & \mbox{if $\theta\in\S([m])\setdif\A([m])$}, 
\end{array}\right.
$$
since $\lambda_0\lambda'_0=\lambda'_0\lambda_0$ and $\lambda'_0\mu=\mu\lambda'_0$. 
In a similar manner, 
$\theta'={\lambda'_0}^{r'_1}{\mu'}^{k'_1}{\lambda'_0}{\mu'}^{k'_2}{\lambda'_0}\cdots{\lambda'_0}{\mu'}^{k'_{r'}}{\lambda'_0}^{r'_2}$ for some 
$r'\geqslant0$, $1\leqslant k'_1,\ldots,k'_{r'}\leqslant\o(\mu')$ and $0\leqslant r'_1,r'_2\leqslant 1$, and so 
$$
{\lambda}^{r'_1}{\mu'}^{k'_1}{\lambda}{\mu'}^{k'_2}{\lambda}\cdots{\lambda}{\mu'}^{k'_{r'}}{\lambda}^{r'_2} 
= {\lambda_0}^{r'_1+r'_2+r'-1}\theta' 
= \left\{\begin{array}{ll} 
\theta' & \mbox{if $\theta'\in\A([n'])$}\\ 
\lambda_0\theta' & \mbox{if $\theta'\in\S([n'])\setdif\A([n'])$}, 
\end{array}\right.
$$
since we also have $\lambda_0\mu'=\mu'\lambda_0$. Thus, 
if $\theta\in\A([m])$ and $\theta'\in\A([n'])$, then 
$$
\sigma=\lambda^{r_1}\mu^{k_1}\lambda\mu^{k_2}\lambda\cdots\lambda\mu^{k_r}\lambda^{r_2} 
{\lambda}^{r'_1}{\mu'}^{k'_1}{\lambda}{\mu'}^{k'_2}{\lambda}\cdots{\lambda}{\mu'}^{k'_{r'}}{\lambda}^{r'_2} \in \langle\lambda,\mu,\mu'\rangle,  
$$
and if $\theta\in\S([m])\setdif\A([m])$ and $\theta'\in\S([n'])\setdif\A([n'])$, then 
$$
\begin{array}{rcl}
\sigma &= &\lambda^{r_1}\mu^{k_1}\lambda\mu^{k_2}\lambda\cdots\lambda\mu^{k_r}\lambda^{r_2} \lambda'_0\lambda_0 
{\lambda}^{r'_1}{\mu'}^{k'_1}{\lambda}{\mu'}^{k'_2}{\lambda}\cdots{\lambda}{\mu'}^{k'_{r'}}{\lambda}^{r'_2} \\
&=& 
\lambda^{r_1}\mu^{k_1}\lambda\mu^{k_2}\lambda\cdots\lambda\mu^{k_r}\lambda^{r_2+r'_1+1} 
{\mu'}^{k'_1}{\lambda}{\mu'}^{k'_2}{\lambda}\cdots{\lambda}{\mu'}^{k'_{r'}}{\lambda}^{r'_2} \in \langle\lambda,\mu,\mu'\rangle, 
\end{array} 
$$
as required. 
\end{proof} 

Notice that $\Gamma_{2\oplus2}=\{\iota,(1,2)(1',2')\}=\langle(1,2)(1',2')\rangle$, where $\iota$ denotes the identity of $\Gamma_{2\oplus2}$. 
So, $\Gamma_{2\oplus2}$ has rank $1$. On the other hand, if $m>2$ or $n>2$, then $\rank(\Gamma_{m\oplus n})\geqslant2$. 
In fact, let $\sigma=\theta\theta'\in \Gamma_{m\oplus n}$, with $\theta\in\S([m])$ and $\theta'\in\S([n'])$. Since $\theta\theta'=\theta'\theta$, 
any element of $\langle\sigma\rangle$ is of the form $\theta^k{\theta'}^k$, for some $k\in\N$. If $\Gamma_{m\oplus n}=\langle\sigma\rangle$, then 
$\lambda_0,\mu\in\langle\theta\rangle$ and $\lambda'_0,\mu'\in\langle\theta'\rangle$, 
since $\lambda,\mu,\mu'\in\Gamma_{m\oplus n}$, and so $\langle\theta\rangle=\S([m])$ and $\langle\theta'\rangle=\S([n'])$, 
whence $m=n=2$. 

As $\mu\mu'=\mu'\mu$, if $\gcd(\o(\mu),\o(\mu'))=1$, then $\langle\mu,\mu'\rangle=\langle\mu\mu'\rangle$. So, we have the following consequence of the previous result. 

\begin{corollary}\label{gcdmn} 
Let $m,n\geqslant 2$ be such $\gcd(\o(\mu),\o(\mu'))=1$. Then, $\Gamma_{m\oplus n}=\langle\lambda,\mu,\mu'\rangle$. 
Moreover, in this case, if $m>2$ or $n>2$, then $\rank(\Gamma_{m\oplus n})=2$. 
\end{corollary}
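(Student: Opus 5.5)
The plan is to deduce the corollary from Proposition~\ref{gamamn} together with the lower bound on the rank established just before the statement. In particular, I will show that $\Gamma_{m\oplus n}$ is generated by the two elements $\lambda$ and $\mu\mu'$. I read the first assertion as $\Gamma_{m\oplus n}=\langle\lambda,\mu,\mu'\rangle=\langle\lambda,\mu\mu'\rangle$, which is where the hypothesis is used.

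\emph{Step 1: $\langle\mu,\mu'\rangle=\langle\mu\mu'\rangle$.} Put $a=\o(\mu)$ and $b=\o(\mu')$, so that $\gcd(a,b)=1$. The permutations $\mu\in\S([m])$ and $\mu'\in\S([n'])$ have disjoint supports, so they commute. Hence $(\mu\mu')^k=\mu^k{\mu'}^k$ for every $k\in\N$, which gives $\langle\mu\mu'\rangle\subseteq\langle\mu,\mu'\rangle$.

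For the reverse inclusion I use the Chinese Remainder Theorem. It gives $k$ with $k\equiv1\pmod a$ and $k\equiv0\pmod b$, and then $(\mu\mu')^k=\mu$. Symmetrically, it gives $k'$ with $k'\equiv0\pmod a$ and $k'\equiv1\pmod b$, and then $(\mu\mu')^{k'}=\mu'$.

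The degenerate cases need no separate argument. If $m=2$, then $\mu=(2)$ is the identity and $a=1$; similarly $b=1$ if $n=2$. In these cases the CRT step is trivial.

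\emph{Step 2: conclude the generation and the rank.} By Step~1 and Proposition~\ref{gamamn},
$$\Gamma_{m\oplus n}=\langle\lambda,\mu,\mu'\rangle=\langle\lambda,\mu\mu'\rangle,$$
so $\rank(\Gamma_{m\oplus n})\leqslant 2$.

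Now assume $m>2$ or $n>2$. The argument preceding the corollary shows that $\Gamma_{m\oplus n}$ is not cyclic. That argument writes a putative generator as $\theta\theta'$ with commuting factors and forces $\langle\theta\rangle=\S([m])$ and $\langle\theta'\rangle=\S([n'])$, which is possible only when $m=n=2$. Therefore $\rank(\Gamma_{m\oplus n})\geqslant2$, and equality holds.

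I do not expect a real obstacle. The only point needing care is Step~1: the coprimality hypothesis is exactly what makes the CRT exponent available, and the cases where $\mu$ or $\mu'$ is trivial must be seen to fit the same argument.
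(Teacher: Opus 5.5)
Your proof is correct and follows the paper's route. Since $\mu$ and $\mu'$ commute and have coprime orders, $\langle\mu,\mu'\rangle=\langle\mu\mu'\rangle$, so Proposition~\ref{gamamn} gives $\Gamma_{m\oplus n}=\langle\lambda,\mu\mu'\rangle$, and the non-cyclicity argument placed before the corollary supplies the lower bound. Your reading of the statement as $\langle\lambda,\mu\mu'\rangle$ is the intended one, as the later application with $\zeta_n=\mu\mu'$ confirms, and your explicit Chinese Remainder step and check of the trivial case $m=2$ or $n=2$ only spell out details the paper leaves implicit.
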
 

Notice that
$$
\o(\mu)=\left\{\begin{array}{ll}
m & \mbox{if $m$ is odd}\\
m-1 &  \mbox{if $m$ is even}
\end{array}\right. 
~\mbox{and}~
\o(\mu')=\left\{\begin{array}{ll}
n & \mbox{if $n$ is odd}\\
n-1 &  \mbox{if $n$ is even.}
\end{array}\right. 
$$

Using GAP \cite{GAP}, we were able to verify through extensive calculations that $\Gamma_{3\oplus3}$, $\Gamma_{4\oplus3}$, and $\Gamma_{4\oplus4}$ are not generated by any two of their elements, hence 
$\rank(\Gamma_{3\oplus3})=\rank(\Gamma_{4\oplus3})=\rank(\Gamma_{4\oplus4})=3$,  taking into account Proposition \ref{gamamn}. 
On the other hand, in addition to the examples arising from Corollary \ref{gcdmn} and Theorem \ref{gamman} below, all the examples we have thoroughly checked using GAP (for example, $\Gamma_{5\oplus5}$, $\Gamma_{6\oplus6}$, $\Gamma_{7\oplus7}$, $\Gamma_{8\oplus8}$, $\Gamma_{9\oplus3}$, $\Gamma_{9\oplus4}$, etc.) have rank $2$. 
Thus, these results lead us to the following conjecture, which we leave as an open problem. 

\begin{conjecture}
Let $m\geqslant n\geqslant 2$ be such that $(m,n)\not\in\{(2,2),(3,3),(4,3),(4,4)\}$. Then, $\rank(\Gamma_{m\oplus n})=2$. 
\end{conjecture} 

We now turn our attention once again to the groups $\Gamma_n$. For $n\geqslant4$, consider the following permutations on $[n]$: 
$$
\theta_n=(1,3)(2,4),~ \eta_n=(3,5,\ldots,n)(2,4,\ldots,n-1) ~ \mbox{and}~ \zeta_n=(1,3,\ldots,n)(4,6,\ldots,n-1). 
$$
Observe that, if $n$ is odd, then $\theta_n,\eta_n,\zeta_n\in\Gamma_n$ and, if $n$ is even, then $\theta_n,\sigma_n\in\Gamma_n$.  
Also, notice that $\eta_n\theta_n=\zeta_n$ and $\zeta_n\theta_n=\eta_n$. 

Let us start by assuming that $n$ is odd. Hence, as previously noted, $\Gamma_n$ is isomorphic to $\Gamma_{\frac{n+1}{2}\oplus\frac{n-1}{2}}$. 
If $n=1\,(\mathrm{mod}\,4)$, then $\frac{n+1}{2}$ is odd and $\frac{n-1}{2}$ is even, whence 
$$
\Gamma_n=\langle (1,3)(2,4),(1,3,\ldots,n),(4,6,\ldots,n-1)\rangle,
$$
by Proposition \ref{gamamn}. Moreover, in this case, $\gcd(\frac{n+1}{2},\frac{n-3}{2})=1$ and so, by Corollary \ref{gcdmn}, 
\begin{equation}\label{1mod4}
\Gamma_n=\langle \theta_n,\zeta_n\rangle = \langle \theta_n,\eta_n\rangle. 
\end{equation}
On the other hand, suppose that $n=3\,(\mathrm{mod}\,4)$. Then, $\frac{n+1}{2}$ is even and $\frac{n-1}{2}$ is odd, whence 
$$
\Gamma_n=\langle (1,3)(2,4),(3,5,\ldots,n),(2,4,\ldots,n-1)\rangle,
$$
by Proposition \ref{gamamn}. 
We have $\zeta_n^{\frac{n-3}{2}}=(1,3,\ldots,n)^{\frac{n-3}{2}}=(1,3,\ldots,n)^{-2}$, whence 
$\eta_n\zeta_n^{\frac{n-3}{2}}\eta_n=(1,n,3)(2,4,\ldots,n-1)^2$ and so 
$$
(\eta_n\zeta_n^{\frac{n-3}{2}}\eta_n)^{\frac{n-3}{4}}=(1,n,3)^{\frac{n-3}{4}}(2,4,\ldots,n-1)^{\frac{n-3}{2}}=(1,n,3)^{\frac{n-3}{4}}(2,4,\ldots,n-1)^{-1}. 
$$
Since 
$$
(1,n,3)^{\frac{n-3}{4}}=\left\{\begin{array}{ll}
\iota_n & \mbox{if $n=3\,(\mathrm{mod}\,12)$}\\
(1,n,3) & \mbox{if $n=7\,(\mathrm{mod}\,12)$}\\
(1,3,n) & \mbox{if $n=11\,(\mathrm{mod}\,12)$,}
\end{array}\right. 
$$
we have 
$$
(\eta_n\zeta_n^{\frac{n-3}{2}}\eta_n)^{\frac{n-3}{4}}\eta_n = \left\{\begin{array}{ll}
(3,5,\ldots,n) & \mbox{if $n=3\,(\mathrm{mod}\,12)$}\\
(1,n,3)(3,5,\ldots,n) & \mbox{if $n=7\,(\mathrm{mod}\,12)$}\\
(1,3,n)(3,5,\ldots,n) & \mbox{if $n=11\,(\mathrm{mod}\,12)$.}
\end{array}\right. 
$$
In addition, if $n=11\,(\mathrm{mod}\,12)$, then $(3,5,\ldots,n)=\theta_n (\eta_n\zeta_n^{\frac{n-3}{2}}\eta_n)^{\frac{n-3}{4}}\eta_n \theta_n$ and, 
if $n=7\,(\mathrm{mod}\,12)$, then 
$$
\eta_n^4 \nu_n^{\frac{n-5}{2}} \eta_n^{\frac{n-5}{2}} \nu_n^3 \eta_n^{\frac{n-5}{2}} \nu_n^{\frac{n-5}{2}} = 
\eta_n^4 \nu_n^{-1} \eta_n^{-2} \nu_n^3 \eta_n^{-2} \nu_n^{-1} = 
\left\{\begin{array}{ll}
\iota_n & \mbox{if $n=7$}\\
(3,5,\ldots,n) & \mbox{if $n>7$,}
\end{array}\right. 
$$
where $\nu_n= (\eta_n\zeta_n^{\frac{n-3}{2}}\eta_n)^{\frac{n-3}{4}}\eta_n =(1,3)(5,7,\ldots,n)$. 
Hence, for $n>7$ we showed that $(3,5,\ldots,n)\in\langle \theta_n,\eta_n\rangle$, 
whence also $(2,4,\ldots,n-1)=(3,5,\ldots,n)^{-1}\eta_n\in \langle \theta_n,\eta_n\rangle$ and so 
\begin{equation}\label{3mod4}
\Gamma_n=\langle \theta_n,\eta_n\rangle=\langle \theta_n,\zeta_n\rangle.
\end{equation} 
Recall that, we already noticed that $\Gamma_{4\oplus3}$ has rank $3$, so the same holds for $\Gamma_7$.  

Next, we suppose that $n$ is even. In this case, we have that $\Gamma_n^+$ is isomorphic to $\Gamma_{\frac{n}{2}\oplus\frac{n}{2}}$. 
Let 
$$
\mu_n=\left\{\begin{array}{ll}
(3,5,\ldots,n-1) & \mbox{if $n=0\,(\mathrm{mod}\,4)$}\\
(1,3,\ldots,n-1) & \mbox{if $n=2\,(\mathrm{mod}\,4)$}\\
\end{array}\right.
~\mbox{and}~
\mu'_n=\left\{\begin{array}{ll}
(4,6,\ldots,n) & \mbox{if $n=0\,(\mathrm{mod}\,4)$}\\
(2,4,\ldots,n) & \mbox{if $n=2\,(\mathrm{mod}\,4)$}\\
\end{array}\right. 
$$
Then, by Proposition \ref{gamamn}, we have 
$$
\Gamma_n^+=\langle \theta_n,\mu_n,\mu'_n\rangle. 
$$
\begin{lemma}\label{gama+}
If $n$ is even and $\Gamma_n^+\subseteq\langle\theta_n,\sigma_n\rangle$, then $\Gamma_n=\langle\theta_n,\sigma_n\rangle$.
\end{lemma}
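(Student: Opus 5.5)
Since $\theta_n,\sigma_n\in\Gamma_n$ (as observed above for even $n$), the inclusion $\langle\theta_n,\sigma_n\rangle\subseteq\Gamma_n$ is immediate. It therefore suffices to prove $\Gamma_n\subseteq\langle\theta_n,\sigma_n\rangle$. By Theorem~\ref{gama_n}, $\Gamma_n$ is the disjoint union of $\Gamma_n^+$ and $\Gamma_n^-$, and $\Gamma_n^+\subseteq\langle\theta_n,\sigma_n\rangle$ by hypothesis. So it remains to handle $\Gamma_n^-$.

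The plan is to show that $\Gamma_n^-$ is a single coset $\Gamma_n^+\sigma_n$ (equivalently $\sigma_n\Gamma_n^+$). First note that $\sigma_n\in\PAP_n^-$, since it sends every odd number to an even number ($n$ even, so $n-1\mapsto n$). Also $\sigma_n$ is an $n$-cycle with $n$ even, so $\sigma_n\notin\A_n$; hence $\sigma_n\in\Gamma_n^-$.

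Now let $\tau\in\Gamma_n^-$ and put $\gamma=\tau\sigma_n^{-1}$.
\begin{itemize}
\item Parity classes: $\tau$ maps $[n]_\mathrm{o}$ onto $[n]_\mathrm{e}$, and $\sigma_n^{-1}$ maps $[n]_\mathrm{e}$ onto $[n]_\mathrm{o}$. Hence $[n]_\mathrm{o}\gamma=[n]_\mathrm{o}$, so $\gamma\in\PAP_n^+$.
\item Sign: $\tau$ and $\sigma_n^{-1}$ are both odd, so $\gamma$ is even, i.e. $\gamma\in\A_n$.
\end{itemize}
Thus $\gamma\in\PAP_n^+\cap\A_n=\Gamma_n^+\subseteq\langle\theta_n,\sigma_n\rangle$. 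It follows that $\tau=\gamma\sigma_n\in\langle\theta_n,\sigma_n\rangle$.

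Combining the two parts gives $\Gamma_n=\Gamma_n^+\cup\Gamma_n^-\subseteq\langle\theta_n,\sigma_n\rangle$, and hence equality. No step here is a real obstacle. The only points needing care are the parity bookkeeping (that $\sigma_n$ swaps the odd and even classes when $n$ is even) and the use of composition left to right, as in the paper. These have been checked directly above.
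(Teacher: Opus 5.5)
Your proof is correct and follows the paper's argument. You reduce to $\Gamma_n^-$ via Theorem~\ref{gama_n}, then move an element of $\Gamma_n^-$ into $\Gamma_n^+$ using the odd, parity-swapping $n$-cycle $\sigma_n$; the only cosmetic difference is that the paper uses $\sigma\sigma_n\in\Gamma_n^+$ and multiplies back by $\sigma_n^{-1}$, whereas you use $\tau\sigma_n^{-1}\in\Gamma_n^+$ and multiply back by $\sigma_n$.
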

\begin{proof}
First, notice that, since $\theta_n,\sigma_n\in\Gamma_n$, we have $\langle\theta_n,\sigma_n\rangle\subseteq\Gamma_n$. 
On the other hand, to prove the converse inclusion, given the hypothesis and Theorem \ref{gama_n}, 
it suffices to show that $\Gamma_n^-\subseteq \langle\theta_n,\sigma_n\rangle$. 
So, let us take $\sigma\in\Gamma_n^-$. Then, $\sigma\sigma_n\in\Gamma_n^+$, whence $\sigma\sigma_n\in \langle\theta_n,\sigma_n\rangle$, 
by hypothesis, and so $\sigma=(\sigma\sigma_n)\sigma_n^{-1}\in\langle\theta_n,\sigma_n\rangle$, as required. 
\end{proof} 

For $n=4$, we have $\Gamma_4^+=\{\iota_4,\theta_4\}=\langle\theta_4\rangle\subseteq\langle\theta_4,\sigma_4\rangle=\langle\sigma_4\rangle$, 
whence $\Gamma_4=\langle\sigma_4\rangle$. So, let us suppose that $n\geqslant6$ (and $n$ is even). 
If $n=2\,(\mathrm{mod}\,4)$, then 
$$
\mu_n=(\sigma_n^{-1}(\theta_n\sigma_n\theta_n\sigma_n^{n-3})^{\frac{n-4}{2}}\sigma_n^{n-3})^{\frac{n-2}{4}}
~\mbox{and}~ \mu'_n=\mu_n^{-1}\sigma_n^2,
$$
whence $\mu_n,\mu'_n\in \langle\theta_n,\sigma_n\rangle$, and so $\Gamma_n^+\subseteq\langle\theta_n,\sigma_n\rangle$. 
Alternatively, if $n=0\,(\mathrm{mod}\,4)$, then 
$$
\mu_n=((\sigma_n^{n-3}\theta_n\sigma_n\theta_n)^{\frac{n-6}{2}}\sigma_n^{n-6})^{\frac{n-4}{4}}
~\mbox{and}~ \mu'_n=\mu_n^{-1}\sigma_n^2\theta_n,
$$
whence $\mu_n,\mu'_n\in \langle\theta_n,\sigma_n\rangle$, and so once again $\Gamma_n^+\subseteq\langle\theta_n,\sigma_n\rangle$. 
Therefore, in both cases, by Lemma \ref{gama+}, we get
\begin{equation}\label{evenn}
\Gamma_n=\langle\theta_n,\sigma_n\rangle.
\end{equation} 

Our next theorem summarizes what we have just proven in (\ref{1mod4}), (\ref{3mod4}) and (\ref{evenn}). 

\begin{theorem}\label{gamman}
For $n=5$ and $n\geqslant9$ when $n$ is odd, $\Gamma_n=\langle \theta_n,\eta_n\rangle=\langle \theta_n,\zeta_n\rangle$
and $\Gamma_n$ has rank $2$. 
For $n\geqslant6$ when $n$ is even, $\Gamma_n=\langle \theta_n,\sigma_n\rangle$ and 
$\Gamma_n$ has rank $2$. In addition,  $\Gamma_4$ is the cyclic group of order $4$ generated by $\sigma_4$,  
$\Gamma_7=\langle (1,3)(2,4), (3,5,7), (2,4,6)\rangle$ and $\Gamma_7$ has rank $3$. 

\end{theorem}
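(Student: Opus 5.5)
The theorem collects what has already been established in (\ref{1mod4}), (\ref{3mod4}) and (\ref{evenn}). What remains is to record the rank statements and to handle the small cases $n=4$ and $n=7$.

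\emph{Generating sets.} For odd $n$ with $n=1\pmod 4$ (this includes $n=5$), equation (\ref{1mod4}) comes from Corollary \ref{gcdmn}. The corollary applies through the isomorphism $\Gamma_n\cong\Gamma_{\frac{n+1}{2}\oplus\frac{n-1}{2}}$, because $\gcd(\frac{n+1}{2},\frac{n-3}{2})=1$. Here one checks that $\theta_n$ plays the role of $\lambda$ and $\zeta_n$ that of $\mu\mu'$, and that $\eta_n=\zeta_n\theta_n$. For $n=3\pmod 4$ with $n\geqslant 11$, equation (\ref{3mod4}) was obtained from the explicit words producing $(3,5,\ldots,n)$ in $\langle\theta_n,\eta_n\rangle$. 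These words were split according to $n$ modulo $12$; for $n=7\pmod{12}$ the word needs $n>7$. For even $n\geqslant6$, equation (\ref{evenn}) follows from Lemma \ref{gama+} together with the explicit words for $\mu_n$ and $\mu'_n$ in $\theta_n$ and $\sigma_n$.

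\emph{Lower bound.} For rank exactly $2$ it suffices to show that $\Gamma_n$ is not cyclic. This uses the remark after Proposition \ref{gamamn}: $\Gamma_{m\oplus n}$ has rank at least $2$ whenever $m>2$ or $n>2$. For odd $n\geqslant5$ we apply this to $\Gamma_n\cong\Gamma_{\frac{n+1}{2}\oplus\frac{n-1}{2}}$, where $\frac{n+1}{2}\geqslant3$. For even $n\geqslant6$ we apply it to the subgroup $\Gamma_n^+\cong\Gamma_{\frac n2\oplus\frac n2}$ with $\frac n2\geqslant3$. The subgroup argument is not immediate, since subgroups of cyclic groups are cyclic but we need the converse direction. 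So instead we note that a cyclic group is abelian, while $\Gamma_n^+$ is not abelian when $\frac n2\geqslant3$: it contains $\A([\frac n2])$-type elements, for instance $3$-cycles, that do not commute. This reduces the even case to a one-line check that $\Gamma_n$ is nonabelian.

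\emph{Small cases.} For $n=4$ the text before Lemma \ref{gama+} gives $\Gamma_4=\langle\sigma_4\rangle$. Its order is $\frac12|\PAP_4|=\frac12\cdot2\cdot(2!)^2=4$, so $\Gamma_4$ is cyclic of order $4$. For $n=7$, the isomorphism $\Gamma_7\cong\Gamma_{4\oplus3}$ together with Proposition \ref{gamamn}, with $m=4$ even and $n=3$ odd, gives the generators $(1,3)(2,4)$, $(3,5,7)$ and $(2,4,6)$. The claim $\rank(\Gamma_7)=3$ rests on the GAP computation quoted for $\Gamma_{4\oplus3}$. This is the main obstacle: I see no short structural argument that no two elements generate $\Gamma_{4\oplus3}\cong$ an index-$2$ subgroup of $\S_4\times\S_3$. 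I would rely on the cited computation, or try to argue through the quotient onto $\S_3\times\S_3$ (via $\S_4\to\S_3$), with the parity constraint showing that two generators cannot cover both factors compatibly.
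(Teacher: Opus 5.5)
Your proposal follows the paper's route. Theorem \ref{gamman} is only a summary of (\ref{1mod4}), (\ref{3mod4}) and (\ref{evenn}), the observation $\Gamma_4^+=\langle\theta_4\rangle\subseteq\langle\sigma_4\rangle$, Proposition \ref{gamamn} applied to $\Gamma_7\cong\Gamma_{4\oplus3}$, and the GAP fact that $\Gamma_{4\oplus3}$ is not $2$-generated. Your identifications are also right: $\theta_n$ as $\lambda$, $\zeta_n$ as $\mu\mu'$, the split of $n$ modulo $12$, and the restriction $n>7$.

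The lower bound for even $n\geqslant 6$ is muddled, though. The subgroup argument you set aside is immediate and is the right one. If $\Gamma_n$ were cyclic, its subgroup $\Gamma_n^+\cong\Gamma_{\frac n2\oplus\frac n2}$ would be cyclic, contradicting the remark after Proposition \ref{gamamn} since $\frac n2\geqslant3$. That is exactly the direction that ``subgroups of cyclic groups are cyclic'' supplies, and your abelian detour uses the same direction anyway. Moreover, your witness is false for $n=6$. There $\A(\{1,3,5\})$ and $\A(\{2,4,6\})$ are cyclic of order $3$ and commute with each other, so $\Gamma_6^+$ contains no pair of non-commuting $3$-cycles. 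If you prefer non-commutativity, use $\theta_n$ and $\sigma_n$ themselves: $2\theta_n\sigma_n=5\neq1=2\sigma_n\theta_n$.

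Your tentative quotient argument for $\Gamma_7$ does work, and it would make the rank-$3$ claim independent of GAP. Since $\S_4\to\S_3$ preserves signs, the map $\S_4\times\S_3\to\S_3\times\S_3$ sends $\Gamma_{4\oplus3}$ onto $\Gamma_{3\oplus3}$. That group, of order $18$, has the abelian subgroup $N=\A_3\times\A_3$ of index $2$. Every element outside $N$ is an involution (a pair of transpositions) acting on $N$ by inversion. Two elements of $N$ generate at most $N$. If one generator $r$ lies outside $N$ and the other is $g$, then $\langle r,g\rangle=\langle z\rangle\langle r\rangle$ with $z\in\{g,rg\}\cap N$, which has order at most $6$. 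So $\Gamma_{3\oplus3}$ is not $2$-generated, hence neither is its preimage-source $\Gamma_{4\oplus3}$, and $\rank(\Gamma_7)=3$ follows together with Proposition \ref{gamamn}. The same quotient, applied to both factors, also removes the computer check for $\Gamma_{3\oplus3}$ and $\Gamma_{4\oplus4}$.
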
 

\section{The monoids $\Delta_n$ and $\Sigma_n$}\label{deltasigma}

Throughout this section, we provide complete descriptions of the elements of the monoids $\Delta_n$ and $\Sigma_n$.  
In addition, we determine the cardinalities of these monoids. 

For each $\alpha\in\T_n(\r=n-1)$, let us denote by $[\alpha]$ the only non-trivial kernel class of $\alpha$.  

\begin{proposition}\label{n-1}
Let $n\geqslant3$. Let $\alpha\in\T_n(\r=n-1)$ and let $[\alpha]=\{x,y\}$. 
\begin{enumerate}
\item If $\alpha\in\Sigma_n$, then $x+y$ is odd. 
\item  If $x+y$ is odd, then: 
\begin{enumerate}
\item $\alpha\in\Sigma_n$ if and only if $\alpha_{|X_x}$ is even; 
\item $\alpha\in\Sigma_n$ if and only if  $\alpha_{|X_y}$ is even;  
\item Exactly half of the elements of the $\mathscr{R}$-class of $\alpha$ in $\T_n$ belong to $\Sigma_n$. 
\end{enumerate}
\end{enumerate}
\end{proposition}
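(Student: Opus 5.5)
The whole argument rests on one fact. For $\alpha\in\T_n(\r=n-1)$ with $[\alpha]=\{x,y\}$, the injective restrictions of $\alpha$ of width $n-1$ are exactly $\alpha_{|X_x}$ and $\alpha_{|X_y}$. Indeed, a restriction to $X_a$ with $a\notin\{x,y\}$ still contains both $x$ and $y$, so it is not injective. Hence $\alpha\in\Sigma_n$ if and only if both $\alpha_{|X_x}$ and $\alpha_{|X_y}$ are even. The plan is therefore to compare the parities of these two partial permutations.

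\textbf{Comparing the two restrictions.} Assume $x<y$ and set $c=x\alpha=y\alpha$. The restrictions $\alpha_{|X_y}$ and $\alpha_{|X_x}$ agree on $Z=[n]\setdif\{x,y\}$. One has domain $Z\cup\{x\}$, the other $Z\cup\{y\}$, and both send the extra point to $c$.
\begin{itemize}
\item Inversions inside $Z$ are common to both, so they cancel.
\item Consider a point $z\in Z$ lying outside the interval $(x,y)$. Then $z$ sits on the same side of $x$ as of $y$, so $\{z,x\}$ is an inversion of $\alpha_{|X_y}$ exactly when $\{z,y\}$ is an inversion of $\alpha_{|X_x}$.
\item Consider a point $z$ with $x<z<y$. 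The order relation of $z$ to the extra point flips, so $\{z,x\}$ is an inversion of $\alpha_{|X_y}$ exactly when $\{z,y\}$ is \emph{not} an inversion of $\alpha_{|X_x}$.
\end{itemize}
It follows that
\[
|\inv(\alpha_{|X_x})|+|\inv(\alpha_{|X_y})|\equiv y-x-1 \pmod 2.
\]
So the two restrictions have equal parity if and only if $y-x$ is odd, that is, if and only if $x+y$ is odd.

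\textbf{Parts 1, 2(a) and 2(b).} If $\alpha\in\Sigma_n$, both restrictions are even, so they have equal parity and $x+y$ is odd; this gives part 1. If $x+y$ is odd, the two restrictions have the same parity. Then $\alpha\in\Sigma_n$ holds if and only if $\alpha_{|X_x}$ is even, if and only if $\alpha_{|X_y}$ is even; this gives 2(a) and 2(b).

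\textbf{Part 2(c).} The $\mathscr{R}$-class of $\alpha$ in $\T_n$ is the set of $\beta$ with the same kernel as $\alpha$. Each such $\beta$ satisfies $[\beta]=\{x,y\}$, with $x+y$ odd. Fix a transposition $\tau$ of the image points, for instance swapping $c$ with some other element of $\im(\alpha)$; this exists since $n\geqslant3$. The map $\beta\mapsto\beta\tau$ is then a bijection of the $\mathscr{R}$-class onto itself. Now $(\beta\tau)_{|X_x}$ equals $\beta_{|X_x}$ followed by $\tau$ restricted to $\im(\beta)$, and that restriction is an odd permutation of $\im(\beta)$. By Lemma~\ref{lema2}, $(\beta\tau)_{|X_x}$ has parity opposite to $\beta_{|X_x}$. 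By 2(a), this bijection therefore interchanges members and non-members of $\Sigma_n$, so exactly half the class lies in $\Sigma_n$.

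\textbf{Main obstacle.} The step needing care is the inversion count. One must check that the contributions of $z$ outside $(x,y)$ match exactly, and keep track of what happens when $z\alpha=c$ is impossible. It is impossible because $\alpha$ has rank $n-1$, so no $z\in Z$ is sent to $c$ and no pair involving the extra point is a tie.
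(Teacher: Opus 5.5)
Your parts 1, 2(a) and 2(b) are correct and follow the paper's argument: the paper's set $A=\{x+1,\ldots,y-1\}$ is your interval $(x,y)$, and you handle the impossibility of ties correctly.

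The gap is in 2(c). You fix one transposition $\tau=(c,d)$, with $c=x\alpha$ and $d\in\im(\alpha)$. You then claim that for every $\beta$ in the $\mathscr{R}$-class, $\tau_{|\im(\beta)}$ is an odd permutation of $\im(\beta)$. But the $\mathscr{R}$-class contains elements with every $(n-1)$-element image. When $\im(\beta)$ omits $c$ or $d$, $\tau_{|\im(\beta)}$ is not a permutation of $\im(\beta)$: it maps $\im(\beta)$ onto a different set. Its number of inversions is then $|c-d|-1$, one for each integer strictly between $c$ and $d$, all of which lie in $\im(\beta)$. This is even whenever $c+d$ is odd, and then $\beta\mapsto\beta\tau$ preserves membership in $\Sigma_n$ instead of swapping it.

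A concrete failure: take $n=3$, $\alpha=\smallt{1&2&3\\1&1&2}$, $c=1$, $d=2$. Then $\beta=\smallt{1&2&3\\1&1&3}$ and $\beta\tau=\smallt{1&2&3\\2&2&3}$ both lie in $\Sigma_3$. Meanwhile $\smallt{1&2&3\\3&3&1}$ and its image $\smallt{1&2&3\\3&3&2}$ both lie outside $\Sigma_3$. So your involution does not interchange members and non-members, and the half-count does not follow from it as written.

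The repair is small, and there are two ways to do it.
\begin{itemize}
\item Let the transposition depend on the image. For each $(n-1)$-subset $Y$, choose a transposition $\tau_Y$ of two points of $Y$ (possible since $n\geqslant3$), and use $\beta\mapsto\beta\tau_{\im(\beta)}$. This preserves images, so it is an involution on each $\mathscr{H}$-class.
\item Keep a single fixed $\tau=(c,d)$ but require $c+d$ to be even, for example $\tau=(1,3)$. By the count above, $\tau_{|\im(\beta)}$ is then odd for every $\beta$ in the class, whether or not it permutes $\im(\beta)$.
\end{itemize}
Either way, Lemma~\ref{lema2} and 2(a) give the parity swap.

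The corrected argument is more direct than the paper's. The paper first uses that the group $\mathscr{H}$-class $H_x\cong\S_{n-1}$ lies half in $\Sigma_n$. It then transports this to each other $\mathscr{H}$-class $H_a$ via the Green's-lemma bijection $\xi\mapsto\xi\beta$, for an explicitly constructed $\beta\in\Sigma_n(\r=n-1)$.
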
 
\begin{proof}
Let us suppose that $x<y$ and let $A=\{x+1,\ldots,y-1\}$. Notice that $\alpha$ has two transversals, namely $X_x$ and $X_y$. 
Hence, for each $a\in A$, $\alpha_{|X_x}$ has one more or one less inversion than $\alpha_{|X_y}$, and so 
if $|A|$ is even (i.e. $x+y$ is odd), then $\alpha_{|X_x}$ is even if and only if $\alpha_{|X_y}$ is even, and if $|A|$ is odd (i.e. $x+y$ is even), 
then $\alpha_{|X_x}$ is even if and only if $\alpha_{|X_y}$ is odd. 
This immediately proves \textit{1}, \textit{2(a)} and \textit{2(b)}. 

As for \textit{2(c)}, let us consider the $\mathscr{H}$-class $H_x$ contained in the $\mathscr{R}$-class of $\alpha$ of the elements that have $X_x$ as their image. 
Then, clearly, $H_x$ is a group isomorphic to $\S_{n-1}$, 
from which it follows that exactly half of its restrictions to $X_x$ are even. Therefore, by \textit{2(a)},  
exactly half of the elements of $H_x$ belong to $\Sigma_n$.
Now, let $a\in X_x$ and let $H_a$ be the $\mathscr{H}$-class contained in the $\mathscr{R}$-class of $\alpha$ of the elements that have $X_a$ as their image. 
Let 
$$
\varepsilon=\smallt{1&\cdots&x-1&x&x+1&\cdots&n\\1&\cdots&x-1&y&x+1&\cdots&n}, 
\quad
\beta=\left\{\begin{array}{ll}
\smallt{1&\cdots&a-1&a&a+1&\cdots&x-1&x&x+1&\cdots&n\\1&\cdots&a-1&a+1&a+2&\cdots&x&x&x+1&\cdots&n} &\mbox{if $a<x$}\\ [3mm] 
\smallt{1&\cdots&x&x+1&\cdots&a-1&a&a+1&\cdots&n\\1&\cdots&x&x&\cdots&a-2&a-1&a+1&\cdots&n} &\mbox{if $x<a$}
\end{array}\right. 
$$
and $\gamma=\varepsilon\beta$. 
Then, it is easy to see that $\varepsilon\in H_x\cap\Sigma_n$, $\beta\in\Sigma_n(\r=n-1)$ and $\gamma\in H_a\cap\Sigma_n$. 
Furthermore, by Green's Lemma, the mapping $H_x\longrightarrow H_a$, $\xi\longmapsto\xi\beta$, is a bijection. 
Let $\xi\in H_x$. Then, $X_x\xi=X_x$, whence $(\xi\beta)_{|X_x}=\xi_{|X_x}\beta_{|X_x}$ and so, as $\beta_{|X_x}$ is even, 
we have, by \textit{2(a)} and Lemma \ref{lema2}, 
$$
\xi\beta\in\Sigma_n\Longleftrightarrow \mbox{$(\xi\beta)_{|X_x}$ is even}  \Longleftrightarrow \mbox{$\xi_{|X_x}$ is even}   \Longleftrightarrow \xi\in\Sigma_n.
$$
Hence, $H_x\cap\Sigma_n\longrightarrow H_a\cap\Sigma_n$, $\xi\longmapsto\xi\beta$, is also a bijection, from which follows that 
exactly half of the elements of $H_a$ belong to $\Sigma_n$ as well. Therefore, 
exactly half of the elements of the $\mathscr{R}$-class of $\alpha$ in $\T_n$ belong to $\Sigma_n$. 
\end{proof} 

Since $\Sigma_n$ has $\Gamma_n$ as its group of units and $\T_n(\r\leqslant n-2)\subseteq\Sigma_n$, the previous result completes the description of $\Sigma_n$. 
Now, as we have $\frac{n-1}{2}\frac{n+1}{2}$ pairs of $[n]$ such that $x+y$ is odd, if $n$ is odd, and $(\frac{n}{2})^2$ pairs of $[n]$ such that $x+y$ is odd, if $n$ is even, then 
\begin{equation}
|\Sigma_n(\r=n-1)| = \left\{
\begin{array}{ll}
\frac{n!(n-1)(n+1)}{8} & \mbox{if $n$ is odd}\\   
\frac{n!n^2}{8} & \mbox{if $n$ is even}   
\end{array}
\right.
\end{equation} 
for $n\geqslant3$, by Proposition \ref{n-1}. 
On the other hand, from $|\Sigma_n(\r\leqslant n-2)|=|\T_n(\r\leqslant n-2)|= n^n  - (1+ \binom{n}{2})n!$  
and $\Sigma_n(\r=n)=\Gamma_n$, 
it follows that 
\begin{equation}\label{sigmasize} \hspace*{-1mm}
|\Sigma_n| = \left\{
\begin{array}{ll}
n^n-(1+\binom{n}{2})n!+  \frac{1}{2}(\frac{n-1}{2})!(\frac{n+1}{2})!  +  \frac{n!(n-1)(n+1)}{8} & \mbox{if $n$ is odd}\\ [1mm] 
n^n-(1+\binom{n}{2})n!+ (\frac{n}{2})!^2  +  \frac{n!n^2}{8} & \mbox{if $n$ is even} 
\end{array}
\right.
\end{equation} 
for $n\geqslant3$. 

Now, let $\X=\{\alpha\in\T_n(\r\leqslant n-2)\mid\mbox{if $x\alpha=y\alpha$, then $x+y$ is even, for all $x,y\in [n]$}\}$. 

\begin{theorem}\label{delta}
For $n\geqslant3$, $\Delta_n=\Sigma_n\setdif\X$, i.e. 
$\Delta_n$ consists of all transformations of $\Sigma_n$ that are either permutations or have at least one kernel class containing elements of distinct parities. 
\end{theorem}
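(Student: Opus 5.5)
We prove the two inclusions $\Delta_n\subseteq\Sigma_n\setdif\X$ and $\Sigma_n\setdif\X\subseteq\Delta_n$ separately.

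\emph{First inclusion.} By definition, $\Delta_n$ is generated by $\Sigma_n(\r\geqslant n-1)=\Gamma_n\cup\Sigma_n(\r=n-1)$. Hence $\Delta_n\subseteq\Sigma_n$, and it suffices to show that no product of generators lies in $\X$. Call a transformation \emph{good} if it is a permutation or has a kernel class containing two elements of distinct parities. By Proposition \ref{n-1}(1), every generator of rank $n-1$ is good. If $\alpha$ is good and $\beta\in\T_n$ is arbitrary, then $\alpha\beta$ is good: either $\alpha$ is a permutation and $\beta$ is a generator, or $\ker\alpha\subseteq\ker(\alpha\beta)$, so a mixed-parity kernel class of $\alpha$ lies inside a kernel class of $\alpha\beta$. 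A product of generators is therefore good by induction on its length. Since elements of $\X$ have rank at most $n-2$ and are not good, we get $\Delta_n\cap\X=\emptyset$.

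\emph{Second inclusion.} Permutations and rank $n-1$ elements of $\Sigma_n$ lie in $\Delta_n$ by definition. So let $\alpha\in\T_n(\r=k)$ with $k\leqslant n-2$, and suppose some kernel class of $\alpha$ contains $x<y$ with $x+y$ odd. We must write $\alpha$ as a product of generators. The plan is to factor $\alpha=\varepsilon\gamma$, where:
\begin{itemize}
\item $\varepsilon\in\Sigma_n(\r=n-1)$ with $[\varepsilon]=\{x,y\}$, obtained via Proposition \ref{n-1}(2)(a) by adjusting the parity of $\varepsilon_{|X_x}$ with a transposition of two images;
\item $\gamma$ is any transformation of rank $k$ with $\gamma$ agreeing with $\alpha$ through $\varepsilon$.
\end{itemize}
We then need $\gamma\in\Delta_n$. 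Since $\T_n(\r\leqslant n-2)\subseteq\Sigma_n$, the key claim becomes the following: every transformation of rank at most $n-2$ with some mixed-parity kernel pair lies in $\Delta_n$. This reduces the problem to a statement about the ideal of $\Delta_n$, and we can aim to prove it by downward induction on rank.

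\emph{Inductive step.} For $\alpha$ of rank $k\leqslant n-2$, choose a kernel class of $\alpha$ with a mixed pair $x,y$. Then pick a second kernel class, or a second element of the same class if that class has size at least $3$, and write $\alpha=\beta\delta$. Here $\beta$ has rank $k+1$ and is obtained by splitting one kernel class of $\alpha$ into two, in such a way that $\beta$ still has a mixed-parity pair, and $\delta$ has rank $n-1$ and merges two points of $\im\beta$ into one. The requirements are:
\begin{itemize}
\item $\delta\in\Sigma_n$, so its merged pair must have odd sum; this is arranged by choosing the image of $\beta$ freely, since any $k+1$ points can be used as $\im\beta$;
\item the parity of $\delta$ on a transversal is fixed using Proposition \ref{n-1}(2)(a), by composing with a transposition in the image;
\item $\beta$ must remain good. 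The split is done so as to keep the pair $x,y$ together; if the class of $x,y$ is exactly $\{x,y\}$, we split a different class instead, which exists since $k\leqslant n-2$ forces either another nontrivial class or a class of size $\geqslant3$.
\end{itemize}
The base case is rank $n-1$, which is handled by the definition together with Proposition \ref{n-1}(1).

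The main obstacle is the bookkeeping in the inductive step. One must check that a split of a kernel class preserving a mixed pair always exists, and the delicate case is when the only nontrivial class is exactly $\{x,y\}$ and $k=n-2$. That case is impossible, since rank $n-2$ forces kernel classes summing to two extra elements. One must also check that the parity constraint on $\delta$ can always be met without destroying the required form of $\beta\delta=\alpha$. The planned way to handle the parity constraint is to absorb an even/odd correction into a permutation of $\im\beta$, which does not change $\ker\beta$.
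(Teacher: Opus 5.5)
\textbf{First inclusion.} This is essentially the paper's argument. However, the sentence ``if $\alpha$ is good and $\beta\in\T_n$ is arbitrary, then $\alpha\beta$ is good'' is false as stated: take $\alpha=\iota_n$ and $\beta\in\X$. What you need is the following. A permutation that is a product of generators lies in $\Gamma_n\subseteq\PAP_n$. So for such a $\sigma$ and a generator $\beta$ with $[\beta]=\{a,b\}$, the kernel class $\{a\sigma^{-1},b\sigma^{-1}\}$ of $\sigma\beta$ again has odd sum. This is exactly the point the paper makes.

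\textbf{Second inclusion: the genuine gap.} The gap is at rank $n-2$, and every lower rank in your induction depends on that step. (The preliminary reduction $\alpha=\varepsilon\gamma$ only restates the claim and can be dropped.)

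Your base case asserts that every rank-$(n-1)$ transformation with a mixed-parity kernel pair lies in $\Delta_n$, citing Proposition~\ref{n-1}(1). But that item is the converse. By Proposition~\ref{n-1}(2), only half of these transformations lie in $\Sigma_n$. For instance, $\smallt{1&2&3&4\\1&1&4&3}\notin\Sigma_4$, since its restriction to $X_1$ has one inversion. As $\Delta_n(\r=n-1)=\Sigma_n(\r=n-1)$, when $k=n-2$ your factor $\beta$ has rank $n-1$ and must satisfy its own evenness condition, not merely be good. So two parities have to be controlled simultaneously.

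Your correction $(\beta,\delta)\mapsto(\beta\pi,\pi^{-1}\delta)$, with $\pi$ permuting $\im\beta$, does not leave $\beta$ alone. By Lemma~\ref{lema2} it flips the parity of $\beta$ exactly when $\pi$ is odd. For example, a transposition of two points of $\im\beta$ outside the pair $\{p,q\}$ merged by $\delta$ flips both factors at once.

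\textbf{The missing idea.} This is the heart of the paper's proof. After normalizing with Lemma~\ref{delta2}, the paper uses a transposition of two points identified by the second factor: $(1,2)$ in type A and $(a,b_1)$ in type B. The second factor absorbs this transposition, so it fixes the first factor's parity alone. The second factor's parity is handled separately: in type A by choosing suitably between the factorizations $\alpha'_1\alpha'_2$ and $\alpha''_1\alpha''_2$; in type B the second factor is a type A element of rank $n-2$, already known to be in $\Delta_n$.

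In your notation, $\beta(p,q)$ flips $\beta$ alone, since $(p,q)\delta=\delta$. Combining it with a transposition $\pi$ as above, i.e.\ using $(\beta\pi(p,q),\pi\delta)$, flips $\delta$ alone. This would close the step for $n\geqslant5$. For $n=4$, the set $\im\beta\setminus\{p,q\}$ is a single point, so this trick is unavailable and a separate argument is needed.

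For $k\leqslant n-3$ your scheme works as stated, and it is a different decomposition from the paper's Lemma~\ref{delta1}. In that range $\beta$ has rank at most $n-2$, so it carries no parity condition. The parity of $\delta$ can then be flipped by swapping the images of two points outside $\im\beta$.
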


In fact, this theorem also holds for $n=1$ and $n=2$, since in these cases we can assume that $\T_n(\r\leqslant n-2)=\emptyset$. 

Observe that,  if $\alpha\in\Sigma_n(\r\leqslant n-1)$, then 
$\alpha\in\Sigma_n\setdif\Delta_n$ if and only if each kernel class of $\alpha$ consists only of even numbers or only of odd numbers. 

Before proving Theorem \ref{delta}, we present two lemmas. 

\begin{lemma}\label{delta1} 
Let $n\geqslant4$ and let $\alpha\in\T_n(\r\leqslant n-2)\setdif\X$. Then, there exist $\alpha_1,\alpha_2\in\T_n\setdif\X$ such that $\alpha=\alpha_1\alpha_2$, 
$\r(\alpha_1)=\r(\alpha)+1$ and $\r(\alpha_2)=n-2$. In particular, if $\r(\alpha)\leqslant n-3$, then 
$\alpha_1,\alpha_2\in\Sigma_n\setdif\X$ and $\r(\alpha)<\r(\alpha_1),\r(\alpha_2)$. 
\end{lemma}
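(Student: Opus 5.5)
The plan is a direct construction. Let $k=\r(\alpha)\leqslant n-2$ and let $A_1,\ldots,A_k$ be the kernel classes of $\alpha$. Since $\alpha\notin\X$, we may assume $A_1$ contains $x,y$ with $x+y$ odd. We build $\alpha_1$ by splitting exactly one kernel class of $\alpha$ into two nonempty parts $B,C$. The split is chosen so that some kernel class of $\alpha_1$ still contains two elements of distinct parities. We then build $\alpha_2$ so that it glues the images of $B$ and $C$ back together and adds just enough extra collapsing to reach rank $n-2$.

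\textbf{Choice of the split.} If $|A_1|\geqslant3$, pick $z\in A_1\setdif\{x,y\}$ and split $A_1$ into $B=A_1\setdif\{z\}$ and $C=\{z\}$. Then $B\supseteq\{x,y\}$ remains a mixed-parity class. If $|A_1|=2$, then $\sum|A_i|=n$ with $k\leqslant n-2$ forces some other class $A_j$, $j\neq1$, with $|A_j|\geqslant2$. Split $A_j$ into a singleton $C$ and $B=A_j\setdif C$, leaving $A_1=\{x,y\}$ intact. In either case the kernel of $\alpha_1$ will have $k+1$ classes, one of which contains a pair of distinct parities.

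\textbf{Definition of $\alpha_1$.} Choose $p,q\in[n]$ of distinct parities; these exist since $n\geqslant2$. Let $\alpha_1$ map $B$ to $p$ and $C$ to $q$. Map each of the remaining $k-1$ kernel classes to distinct points outside $\{p,q\}$, which is possible since $k+1\leqslant n$. Then $\r(\alpha_1)=k+1$, and $\alpha_1\notin\X$: either $\r(\alpha_1)=n-1$, so $\alpha_1\notin\X$ trivially, or it has a mixed kernel class.

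\textbf{Definition of $\alpha_2$.} On $\im(\alpha_1)$ the map $\alpha_2$ is forced by $\alpha=\alpha_1\alpha_2$. It sends $p$ and $q$ both to the common $\alpha$-image of $B\cup C$, and sends each other image point to the $\alpha$-image of its class. This makes $\{p,q\}$ lie in one kernel class of $\alpha_2$, a mixed-parity class, so $\alpha_2\notin\X$. On $\im(\alpha_1)$ the map $\alpha_2$ has rank $k$. The remaining $n-k-1\geqslant1$ points of $[n]\setdif\im(\alpha_1)$ are used to raise the rank to exactly $n-2$. Send one of them to $p\alpha_2$. Send the other $n-k-2$ injectively into $[n]\setdif\im(\alpha)$, which has $n-k$ elements, so there is room. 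Then $\r(\alpha_2)=k+(n-k-2)=n-2$ and $\alpha_1\alpha_2=\alpha$ by construction.

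\textbf{The ``in particular'' part.} If $k\leqslant n-3$, then $\r(\alpha_1)=k+1\leqslant n-2$ and $\r(\alpha_2)=n-2$. Since $\T_n(\r\leqslant n-2)\subseteq\Sigma_n$, both maps lie in $\Sigma_n\setdif\X$, and clearly $k<\r(\alpha_1),\r(\alpha_2)$.

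I expect the only delicate point to be the choice of which class to split. A careless split, for instance splitting $A_1=\{x,y\}$ itself, would destroy the unique mixed pair and could land $\alpha_1$ in $\X$. The case distinction on $|A_1|$ handles this. The rank count for $\alpha_2$ is routine bookkeeping.
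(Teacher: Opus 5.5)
Your proof is correct and follows essentially the same route as the paper: split off a singleton $\{u\}$, with $u\notin\{x,y\}$, from a non-trivial kernel class of $\alpha$, then let $\alpha_2$ glue the two resulting images back together and add one mixed-parity collapse to reach rank $n-2$. The difference is cosmetic. The paper keeps $\alpha_1$ equal to $\alpha$ except at $u$, which it sends to $b_1\notin\im(\alpha)$, and takes $\alpha_2$ close to the identity, so its mixed pair is $\{b_2-1,b_2\}$ or $\{b_1,b_2\}$ after a case split on whether $b_1=b_2-1$; your free choice of $p,q$ of distinct parities makes the mixed pair $\{p,q\}$ in $\ker(\alpha_2)$ immediate and avoids that case split.
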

\begin{proof}
Since $\alpha\not\in\X$, there exist $x,y\in[n]$ such that $x\alpha=y\alpha$ and $x+y$ is odd. On the other hand, since $\r(\alpha)\leqslant n-2$, there exist  
$u\in[n]\setdif\{x,y\}$ such that the kernel class $(u\alpha)\alpha^{-1}$ is non-trivial (notice that we may or may not have $u\alpha=x\alpha$) 
and $b_1,b_2\in[n]\setdif\im(\alpha)$ such that $b_1<b_2$. 
Define $\alpha_1\in\T_n$ by 
$$
i\alpha_1=\left\{ \begin{array}{ll} 
b_1 & \mbox{if $i=u$}\\
i\alpha & \mbox{otherwise}. 
\end{array}\right. 
$$
Then, $\im(\alpha_1)=\im(\alpha)\cup\{b_1\}$ and $x\alpha_1=y\alpha_1$, whence $\r(\alpha_1)=\r(\alpha)+1$ and $\alpha_1\not\in\X$. 

If $b_1<b_2-1$, then define $\alpha_2\in\T_n$ by 
$$
j\alpha_2=\left\{ \begin{array}{ll} 
u\alpha & \mbox{if $j=b_1$}\\
b_2-1 & \mbox{if $j=b_2$}\\
j & \mbox{otherwise}. 
\end{array}\right. 
$$
Hence, $\im(\alpha_2)=[n]\setdif\{b_1,b_2\}$ and so $\r(\alpha_2)=n-2$. Moreover, as $(b_2-1)\alpha_2=b_2-1=b_2\alpha_2$, it follows that $\alpha_2\not\in\X$. 
On the other hand, if $b_1=b_2-1$, then define $\alpha_2\in\T_n$ by 
$$
j\alpha_2=\left\{ \begin{array}{ll} 
u\alpha & \mbox{if $j=b_1,b_2$}\\
j & \mbox{otherwise}. 
\end{array}\right. 
$$
Also, in this case, $\im(\alpha_2)=[n]\setdif\{b_1,b_2\}$, whence $\r(\alpha_2)=n-2$, and $(b_2-1)\alpha_2=u\alpha=b_2\alpha_2$, so $\alpha_2\not\in\X$. 
Clearly, in both cases, we have $\alpha=\alpha_1\alpha_2$. 

Finally, observe that, if $\r(\alpha)\leqslant n-3$, then $\r(\alpha_1),\r(\alpha_2)\leqslant n-2$, 
and so $\alpha_1,\alpha_2\in\T_n\setdif\X$ implies $\alpha_1,\alpha_2\in\Sigma_n\setdif\X$, as required. 
\end{proof} 

\begin{lemma}\label{delta2} 
Let $n\geqslant4$ and let $\alpha\in\T_n(\r\leqslant n-1)$ be such that $x\alpha=y\alpha$ for some $x,y\in[n]$ with $x+y$ an odd number. 
Then, there exist $\sigma,\tau\in\Gamma_n$ and $c\in\{1,2\}$ such that $(1)\sigma\alpha\tau=(2)\sigma\alpha\tau=c$. 
Moreover, if $x\alpha$ is odd or $n$ is even, we can guarantee the result with $c=1$. 
\end{lemma}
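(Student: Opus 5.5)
The plan is to build $\sigma$ and $\tau$ separately, using the description of $\Gamma_n$ in Theorem \ref{gama_n}. Composition is read left to right, so $(1)\sigma\alpha\tau=((1\sigma)\alpha)\tau$. It therefore suffices to find $\sigma\in\Gamma_n$ with $\{1\sigma,2\sigma\}=\{x,y\}$, and then $\tau\in\Gamma_n$ with $(x\alpha)\tau=c$.

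\emph{Step 1: construction of $\sigma$.} Since $x+y$ is odd, one of $x,y$ is odd and the other is even; by symmetry assume $x$ is odd.
\begin{itemize}
\item Take any bijection $[n]_\mathrm{o}\to[n]_\mathrm{o}$ sending $1\mapsto x$ and any bijection $[n]_\mathrm{e}\to[n]_\mathrm{e}$ sending $2\mapsto y$. Together they give some $\sigma\in\PAP_n^+$.
\item By Theorem \ref{gama_n}, such a $\sigma$ lies in $\Gamma_n$ exactly when it is even. If it is odd, we compose it with a transposition of two odd points both different from $x$.
\item Such a pair exists when $|[n]_\mathrm{o}|\geqslant3$, i.e. $n\geqslant5$. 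For even $n\geqslant6$ we could alternatively use two even points different from $y$.
\item For $n=4$ the construction is rigid, and I would argue directly instead. By Theorem \ref{gamman}, $\Gamma_4=\langle\sigma_4\rangle$. The pairs $\{x,y\}$ with $x+y$ odd are $\{1,2\},\{2,3\},\{3,4\},\{1,4\}$, which are exactly the cyclically consecutive pairs. Hence some power $\sigma_4^k$ maps $\{1,2\}$ onto $\{x,y\}$.
\end{itemize}
With this $\sigma$ we get $(1)\sigma\alpha=(2)\sigma\alpha=x\alpha=:z$.

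\emph{Step 2: construction of $\tau$.} We need an element of $\Gamma_n$ sending $z$ to $1$ or to $2$.
\begin{itemize}
\item If $z$ is odd, choose $\tau\in\PAP_n^+$ with $z\tau=1$. If $\tau$ is odd, correct its parity by a transposition of two even points. This is possible since $|[n]_\mathrm{e}|\geqslant2$ for $n\geqslant4$, and the correction does not affect $z\tau=1$. This gives $c=1$.
\item If $z$ is even and $n$ is even, choose $\tau\in\PAP_n^-$ with $z\tau=1$, mapping evens onto odds. By Theorem \ref{gama_n} we need $\tau$ odd; if it is even, compose with a transposition of two points whose image is not $1$. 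Again $c=1$.
\item If $z$ is even and $n$ is odd, $\PAP_n^-$ is empty, so $z$ can only go to an even point. Choose $\tau\in\PAP_n^+$ with $z\tau=2$, correcting parity with a transposition of two odd points (available since there are at least $3$ odd points). This gives $c=2$.
\end{itemize}
This case analysis also yields the last sentence of the lemma: if $x\alpha$ is odd or $n$ is even, then $c=1$.

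The main obstacle is the parity bookkeeping, together with the small case $n=4$, where no free transposition is available. I would handle it by the explicit cyclic description of $\Gamma_4$. For Step 2 with $n=4$, the powers of $\sigma_4$ act transitively on $[4]$, so $c=1$ is attainable directly.
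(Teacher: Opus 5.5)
Your proof is correct and follows the same two-step plan as the paper: first find $\sigma\in\Gamma_n$ with $\{1\sigma,2\sigma\}=\{x,y\}$, then find $\tau\in\Gamma_n$ with $(x\alpha)\tau=c$, checking membership via Theorem~\ref{gama_n}. The paper writes down explicit permutations for each case (e.g.\ $(2,y)(3,5)$, $(1,x)(2,y)$, $(1,b)(2,4)$, an explicit $n$-cycle when $b$ and $n$ are even, and ad hoc choices for $n=4,5$), whereas you argue existence by extending a parity-respecting bijection and fixing the sign with a transposition. One wording fix is needed in the case $z$ even, $n$ even: the correcting transposition must swap two points of the same parity (e.g.\ replace $\tau$ by $\tau(2,4)$), since otherwise $\tau$ leaves $\PAP_n^-$.
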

\begin{proof}
Let $b=x\alpha$.   
Without loss of generality, let us assume that $x$ is odd and $y$ is even.  
We begin by considering four cases involving $x$ and $y$. 
If $x=1$ and $y=2$, then take $\sigma=\iota_n$.  
If $x=1$ and $y\neq2$, then take $\sigma=(2,y)(3,5)\in\Gamma_n$ for $n\geqslant5$; 
if $n=4$, then $y=4$, and so take $\sigma=(4,3,2,1)\in\Gamma_4$. 
If $x\neq1$ and $y=2$, then take $\sigma=(1,x)(4,6)\in\Gamma_n$ for $n\geqslant6$; 
if $n=4$, then $x=3$, and so take $\sigma=(1,2,3,4)\in\Gamma_4$; 
if $n=5$, then $x=3$ or $x=5$, and so take $\sigma=(1,3,5)\in\Gamma_5$ in the first case and $\sigma=(1,5,3)\in\Gamma_5$ in the last case. 
Finally, if $x\neq1$ and $y\neq2$, then take $\sigma=(1,x)(2,y)\in\Gamma_n$. 
In any case, we get $(1)\sigma\alpha=(2)\sigma\alpha=b$. 

Now, we consider five cases that depend on $b$ and the parity of $n$. 
If $b=1$, then take $\tau=\iota_n$. 
If $b\neq1$ and $b$ is odd, then take $\tau=(1,b)(2,4)\in\Gamma_n$. 
If $b$ and $n$ are even, then take $\tau=(n,n-1,\ldots,b+1,2,b-1,\ldots,3,b,1)\in\Gamma_n$. 
In any of the cases already considered, we obtain $(1)\sigma\alpha\tau=(2)\sigma\alpha\tau=1$. 
Next, we consider the remaining cases. 
If $b=2$ and $n$ is odd, then take $\tau=\iota_n$. 
Finally, if $b$ is even, $b\neq2$ and $n$ is odd, then take $\tau=(1,3)(2,b)\in\Gamma_n$. 
In these cases, we obtain  $(1)\sigma\alpha\tau=(2)\sigma\alpha\tau=2$. 
\end{proof} 

Let $\alpha\in\T_n(\r=n-2)$. Then, $\alpha$ can only be one of the following two types: 
\begin{itemize}
\item ({\sc type a}) There exists $b\in\im(\alpha)$ such that $|b\alpha^{-1}|=3$;
\item ({\sc type b}) There exist $a,b\in\im(\alpha)$ such that $a\neq b$ and $|a\alpha^{-1}|=|b\alpha^{-1}|=2$. 
\end{itemize}
Observe that, if $\sigma,\tau\in\S_n$, then $\r(\sigma\alpha\tau)=n-2$ and $\alpha$ is of {\sc type a} if and only if $\sigma\alpha\tau$ is of {\sc type a}. 

\begin{proof}(of Theorem \ref{delta}) 
First, notice that, if $n=3$, then $\X=\emptyset$. 
As we have already noted, $\Delta_3=\Sigma_3$, so $\Delta_3=\Sigma_3\setdif\X$. 
Therefore, for the rest of the proof, we assume that $n\geqslant4$. 

Next, let us show that $\Delta_n\subseteq\Sigma_n\setdif\X$. Let $\alpha\in\Delta_n$. 
Then, $\alpha\in\Sigma_n$, given that $\Delta_n$ is generated by $\Sigma_n(\r\geqslant n-1)$. 
If $\r(\alpha)\geqslant n-1$, then $\alpha\not\in\X$, by the definition of $\X$, and so $\alpha\in\Sigma_n\setdif\X$. 
So, suppose that $\r(\alpha)\leqslant n-2$. Hence, there exist $\sigma\in\Gamma_n$ (possibly $\sigma=\iota_n$), 
$\beta\in\Sigma_n(\r=n-1)$ and $\gamma\in\Delta_n$ such that $\alpha=\sigma\beta\gamma$. 
Let $[\beta]=\{a,b\}$. Then, by Proposition \ref{n-1}, $a+b$ is odd. Let $x=a\sigma^{-1}$ and $y=b\sigma^{-1}$. 
As $\sigma^{-1}\in\PAP_n$ and $a+b$ is odd, then $x+y$ is also odd. 
Moreover, $x\alpha=x\sigma\beta\gamma=a\beta\gamma=b\beta\gamma=y\sigma\beta\gamma=y\alpha$ and so $\alpha\not\in\X$. 
Thus, we showed that $\Delta_n\subseteq\Sigma_n\setdif\X$. 

Now, we show the converse inclusion. Let $\alpha\in\Sigma_n\setdif\X$. 
If $\r(\alpha)\geqslant n-1$, then $\alpha\in\Delta_n$, as $\Delta_n$ is generated by $\Sigma_n(\r\geqslant n-1)$.
If $\r(\alpha)\leqslant n-3$, by Lemma \ref{delta1} and using simple inductive reasoning, $\alpha$ can be written as a product of transformations of  
$\Sigma_n\setdif\X$ of rank $n-2$. Thus, to complete the proof, it suffices to show that if $\r(\alpha)=n-2$, then $\alpha\in\Delta_n$.  
Moreover, in view of Lemma \ref{delta2}, we may assume $1\alpha=2\alpha=b\in\{1,2\}$. 
Let $b_1,b_2\in[n]$ be such that $b_1<b_2$ and $[n]\setdif\im(\alpha)=\{b_1<b_2\}$. 

First, suppose that $\alpha$ is of {\sc type a}. Let $u\in[n]$ be such that $u\geqslant3$ and $u\alpha=b$. 
Let us consider \textit{two} cases. 

\noindent{\sc case} 1. $b=1$ and $2\in\im(\alpha)$ [respectively, $b=2$ and $1\in\im(\alpha)$]. Observe that, in this case, $b_1\geqslant3$. 
Define $\alpha'_1,\alpha'_2,\alpha''_1,\alpha''_2\in\T_n$ by 
$$
i\alpha'_1=\left\{ \begin{array}{ll} 
1 & \mbox{if $i=1,2$}\\
2 & \mbox{if $i=u$}\\
b_2 & \mbox{if $i=v$}\\
i\alpha & \mbox{otherwise},  
\end{array}\right. 
j\alpha'_2=\left\{ \begin{array}{ll} 
1 & \mbox{if $j=1,2$}\\
2 & \mbox{if $j=b_2$}\\
j & \mbox{otherwise},  
\end{array}\right. 
i\alpha''_1=\left\{ \begin{array}{ll} 
1 & \mbox{if $i=1,2$}\\
2 & \mbox{if $i=u$}\\
b_1 & \mbox{if $i=v$}\\
i\alpha & \mbox{otherwise} 
\end{array}\right. 
~\mbox{and}~
j\alpha''_2=\left\{ \begin{array}{ll} 
1 & \mbox{if $j=1,2$}\\
2 & \mbox{if $j=b_1$}\\
b_1 & \mbox{if $j=b_2$}\\
j & \mbox{otherwise}, 
\end{array}\right. 
$$ 
with $v\in[n]$ such that $v\alpha=2$ [respectively, 
$$
i\alpha'_1=\left\{ \begin{array}{ll} 
2 & \mbox{if $i=1,2$}\\
1 & \mbox{if $i=u$}\\
b_2 & \mbox{if $i=v$}\\
i\alpha & \mbox{otherwise},  
\end{array}\right. 
j\alpha'_2=\left\{ \begin{array}{ll} 
2 & \mbox{if $j=1,2$}\\
1 & \mbox{if $j=b_2$}\\
j & \mbox{otherwise},  
\end{array}\right. 
i\alpha''_1=\left\{ \begin{array}{ll} 
2 & \mbox{if $i=1,2$}\\
1 & \mbox{if $i=u$}\\
b_1 & \mbox{if $i=v$}\\
i\alpha & \mbox{otherwise} 
\end{array}\right. 
~\mbox{and}~
j\alpha''_2=\left\{ \begin{array}{ll} 
2 & \mbox{if $j=1,2$}\\
1 & \mbox{if $j=b_1$}\\
b_1 & \mbox{if $j=b_2$}\\
j & \mbox{otherwise}, 
\end{array}\right. 
$$ 
with $v\in[n]$ such that $v\alpha=1$]. 
Then, $\alpha=\alpha'_1\alpha'_2=\alpha''_1\alpha''_2$, $\im(\alpha'_1)=\im(\alpha)\cup\{b_2\}$ and 
$\im(\alpha'_2)=\im(\alpha''_1)=\im(\alpha''_2)=\im(\alpha)\cup\{b_1\}$, and so 
$\r(\alpha'_1)=\r(\alpha'_2)=\r(\alpha''_1)=\r(\alpha''_2)=n-1$. 
Moreover, ${\alpha_2'} _{|X_1}$ has $b_2-3$ [respectively, $b_2-2$] inversions and ${\alpha''_2}_{|X_1}$ has $b_2-4$ [respectively, $b_2-3$] inversions. 
Let us take 
$$
\alpha_2=\left\{ \begin{array}{ll} 
\alpha'_2 & \mbox{if $b_2$ is odd [respectively, even]}\\
\alpha''_2 & \mbox{if $b_2$ is even [respectively, odd]}. 
\end{array}\right. 
$$
Hence, ${\alpha_2}_{|X_1}$ is even and so $\alpha_2\in\Delta_n$. Next, observe that $(1,2)\alpha'_2=\alpha'_2$ and $(1,2)\alpha''_2=\alpha''_2$. 
On the other hand, 
$\inv((\alpha'_1(1,2))_{|X_1})=\inv({\alpha'_1}_{|X_1})\cup\{\{2,u\}\}$ and $\inv((\alpha''_1(1,2))_{|X_1})=\inv({\alpha''_1}_{|X_1})\cup\{\{2,u\}\}$ 
[respectively, 
$\inv({\alpha'_1}_{|X_1})=\inv((\alpha'_1(1,2))_{|X_1})\cup\{\{2,u\}\}$ and $\inv({\alpha''_1}_{|X_1})=\inv((\alpha''_1(1,2))_{|X_1})\cup\{\{2,u\}\}$], 
whence ${\alpha'_1}_{|X_1}$ is even if and only if $(\alpha'_1(1,2))_{|X_1}$ is odd, and 
${\alpha''_1}_{|X_1}$ is even if and only if $(\alpha''_1(1,2))_{|X_1}$ is odd. 
Let us take 
$$
\alpha_1=\left\{ \begin{array}{ll} 
\alpha'_1 & \mbox{if ${\alpha'_1}_{|X_1}$ is even and $b_2$ is odd [respectively, even]}\\
\alpha'_1(1,2) & \mbox{if ${\alpha'_1}_{|X_1}$ is odd and $b_2$ is odd [respectively, even]}\\ 
\alpha''_1 & \mbox{if ${\alpha''_1}_{|X_1}$ is even and $b_2$ is even [respectively, odd]}\\
\alpha''_1(1,2) & \mbox{if ${\alpha'_1}_{|X_1}$ is odd and $b_2$ is even [respectively, odd].}
\end{array}\right. 
$$
Therefore, ${\alpha_1}_{|X_1}$ is even and so $\alpha_1\in\Delta_n$. Moreover, 
$$
\begin{array}{rcl}
\alpha_1\alpha_2 & = & \left\{ \begin{array}{ll} 
\alpha'_1\alpha'_2 & \mbox{if ${\alpha'_1}_{|X_1}$ is even and $b_2$ is odd [respectively, even]}\\
\alpha'_1(1,2)\alpha'_2 & \mbox{if ${\alpha'_1}_{|X_1}$ is odd and $b_2$ is odd [respectively, even]}\\ 
\alpha''_1\alpha''_2 & \mbox{if ${\alpha''_1}_{|X_1}$ is even and $b_2$ is even [respectively, odd]}\\
\alpha''_1(1,2)\alpha''_2 & \mbox{if ${\alpha'_1}_{|X_1}$ is odd and $b_2$ is even [respectively, odd]}
\end{array}\right. \\ [9mm]
 & = &  \left\{ \begin{array}{ll} 
\alpha'_1\alpha'_2 & \mbox{if $b_2$ is odd [respectively, even]}\\
\alpha''_1\alpha''_2 & \mbox{if $b_2$ is even [respectively, odd]}
\end{array}\right. \\ [3mm]
& = & \alpha. 
\end{array}
$$
Thus, $\alpha\in\Delta_n$. 

\noindent{\sc case} 2. $b=1$ and $2\not\in\im(\alpha)$ [respectively, $b=2$ and $1\not\in\im(\alpha)$]. In this case, we have $b_1=2$ [respectively, $b_1=1$]. 
Define $\alpha'_1,\alpha_2\in\T_n$ by 
$$
i\alpha'_1=\left\{ \begin{array}{ll} 
\mbox{$1$ [respectively, $2$]} & \mbox{if $i=1,2$}\\
\mbox{$2$ [respectively, $1$]}  & \mbox{if $i=u$}\\
i\alpha & \mbox{otherwise}  
\end{array}\right. 
~\mbox{and}~ 
j\alpha_2=\left\{ \begin{array}{ll} 
\mbox{$1$ [respectively, $2$]}  & \mbox{if $j=1,2$}\\
j & \mbox{otherwise.}  
\end{array}\right. 
$$ 
Then, $\alpha=\alpha'_1\alpha_2$, $\im(\alpha'_1)=\im(\alpha)\cup\{2\}$ and $\im(\alpha_2)=[n]\setdif\{2\}$ 
[respectively, $\im(\alpha'_1)=\im(\alpha)\cup\{1\}$ and $\im(\alpha_2)=[n]\setdif\{1\}$], 
whence $\r(\alpha'_1)=\r(\alpha_2)=n-1$. Clearly, 
$\alpha_2\in\Delta_n$ (note that $\alpha_2$ is an \textit{order-preserving} transformation) and 
$(1,2)\alpha_2=\alpha_2$. 
On the other hand, 
$\inv((\alpha'_1(1,2))_{|X_1})=\inv({\alpha'_1}_{|X_1})\cup\{\{2,u\}\}$ 
[respectively, 
$\inv({\alpha'_1}_{|X_1})=\inv((\alpha'_1(1,2))_{|X_1})\cup\{\{2,u\}\}$] 
and so ${\alpha'_1}_{|X_1}$ is even if and only if $(\alpha'_1(1,2))_{|X_1}$ is odd. 
Let us take 
$$
\alpha_1=\left\{ \begin{array}{ll} 
\alpha'_1 & \mbox{if ${\alpha'_1}_{|X_1}$ is even}\\
\alpha'_1(1,2) & \mbox{if ${\alpha'_1}_{|X_1}$ is odd.}
\end{array}\right. 
$$
Therefore, ${\alpha_1}_{|X_1}$ is even and so $\alpha_1\in\Delta_n$. Moreover, 
$$
\alpha_1\alpha_2=\left\{ \begin{array}{ll} 
\alpha'_1\alpha_2 & \mbox{if ${\alpha'_1}_{|X_1}$ is even}\\
\alpha'_1(1,2)\alpha_2 & \mbox{if ${\alpha'_1}_{|X_1}$ is odd.}
\end{array}\right. 
=\alpha'_1\alpha_2=\alpha. 
$$
Thus, $\alpha\in\Delta_n$. 

Secondly, suppose that $\alpha$ is of {\sc type b}. Let $u,v, a\in[n]$ be such that $2<u<v$ and 
$u\alpha=v\alpha=a$. Let us consider two cases again.  

\noindent{\sc case} 1. $a$, $b_1$ and $b_2$ do not all have the same parity.  Define $\alpha'_1,\alpha_2\in\T_n$ by 
$$
i\alpha'_1=\left\{ \begin{array}{ll} 
b & \mbox{if $i=1,2$}\\
b_1 & \mbox{if $i=u$}\\
a & \mbox{if $i=v$}\\
i\alpha & \mbox{otherwise}
\end{array}\right. 
~\mbox{and}~ 
j\alpha_2=\left\{ \begin{array}{ll} 
a & \mbox{if $j=b_1,b_2$}\\
j & \mbox{otherwise.}
\end{array}\right. 
$$
Then, $\alpha=\alpha'_1\alpha_2$, $\im(\alpha'_1)=\im(\alpha)\cup\{b_1\}$ and $\im(\alpha_2)=\im(\alpha)$, 
whence $\r(\alpha'_1)=n-1$ and $\r(\alpha_2)=n-2$. Moreover, it is clear that $\alpha_2\in\Sigma_n(\r=n-2)\setdif\X$ 
is of {\sc type a} (notice that, we also have $a\alpha_2=a$) 
and so, based on what we have shown above, $\alpha_2\in\Delta_n$. 

\noindent{\sc case} 2. $a$, $b_1$ and $b_2$ all have the same parity.  In this case, we must have $n\geqslant 5$.  
If $a+c$ is even for any $c\in\im(\alpha)\setdif\{a,b\}$, then $[n]$ has at least $n-1$ elements with the same parity, which holds only for $n \leqslant 3$. 
Hence, there exists $c\in\im(\alpha)\setdif\{a,b\}$ such that $a+c$ is odd. Let $w\in[n]$ be such that $w\alpha=c$. Define 
$\alpha'_1,\alpha_2\in\T_n$ by 
$$
i\alpha'_1=\left\{ \begin{array}{ll} 
b & \mbox{if $i=1,2$}\\
b_1 & \mbox{if $i=u$}\\
a & \mbox{if $i=v$}\\
b_2 & \mbox{if $i=w$}\\
i\alpha & \mbox{otherwise}
\end{array}\right. 
~\mbox{and}~ 
j\alpha_2=\left\{ \begin{array}{ll} 
a & \mbox{if $j=c,b_1$}\\
c & \mbox{if $j=b_2$}\\
j & \mbox{otherwise.}
\end{array}\right. 
$$
Then, $\alpha=\alpha'_1\alpha_2$, $\im(\alpha'_1)=[n]\setdif\{c\}$ and $\im(\alpha_2)=\im(\alpha)$, 
whence $\r(\alpha'_1)=n-1$ and $\r(\alpha_2)=n-2$. Moreover, it is clear that $\alpha_2\in\Sigma_n(\r=n-2)\setdif\X$ 
is of {\sc type a} (notice that, in this case, we also have $a\alpha_2=a$) 
and so, as above, we get $\alpha_2\in\Delta_n$. 

Now, let $\sigma=(a,b_1)\in\S_n$. In both cases, we have $\sigma\alpha_2=\alpha_2$ and so $(\alpha'_1\sigma)\alpha_2=\alpha'_1\alpha_2=\alpha$. 
On the other hand, 
$$
(\alpha'_1\sigma)_{|X_1}={\alpha'_1}_{|X_1}\sigma_{|X_1\alpha'_1}={\alpha'_1}_{|X_1}\sigma_{|[n]\setdif\{d\}},
$$
with $d=b_2$ in {\sc case 1} and $d=c$ in {\sc case 2}.  
Since $d\not\in\{a,b_1\}$, then $\sigma_{|[n]\setdif\{d\}}=(a,b_1)$ is an odd permutation of $[n]\setdif\{d\}$. 
Hence, by \ref{lema2}, if ${\alpha'_1}_{|X_1}$ is odd, then $(\alpha'_1\sigma)_{|X_1}$ is even. 
Take 
$$
\alpha_1=\left\{ \begin{array}{ll} 
\alpha'_1 & \mbox{if ${\alpha'_1}_{|X_1}$ is even}\\
\alpha'_1\sigma & \mbox{if ${\alpha'_1}_{|X_1}$ is odd.}
\end{array}\right. 
$$
Therefore, ${\alpha_1}_{|X_1}$ is even and so $\alpha_1\in\Delta_n$. Moreover, 
$$
\alpha_1\alpha_2=\left\{ \begin{array}{ll} 
\alpha'_1\alpha_2 & \mbox{if ${\alpha'_1}_{|X_1}$ is even}\\
(\alpha'_1\sigma)\alpha_2 & \mbox{if ${\alpha'_1}_{|X_1}$ is odd.}
\end{array}\right. 
=\alpha'_1\alpha_2=\alpha. 
$$
Thus, $\alpha\in\Delta_n$. 
The proof is now complete. 
\end{proof} 

For any integers $a$ and $b$, denote by $S(a,b)$ the Stirling number of the second kind of $a$ and $b$. 
Observe that, for $a>0$, if $b\leqslant0$ or $b>a$, then $S(a,b)=0$. 

Let $1\leqslant r\leqslant n-2$. By Theorem \ref{delta}, if $n$ is odd, for each $1\leqslant s\leqslant\frac{n+1}{2}$, 
we have $S(\frac{n+1}{2},s)S(\frac{n-1}{2},r-s)$ different possible kernels for an element of $\Sigma_n\setdif\Delta_n$ with rank $r$; 
 if $n$ is even, for each $1\leqslant s\leqslant\frac{n}{2}$, 
we have $S(\frac{n}{2},s)S(\frac{n}{2},r-s)$ different possible kernels for an element of $\Sigma_n\setdif\Delta_n$ with rank $r$. 
Therefore, we get 
\begin{equation}\label{deltasize} 
|\Sigma_n\setdif\Delta_n| = \left\{
\begin{array}{ll}
\sum_{r=1}^{n-2}\binom{n}{r}r!\sum_{s=1}^{\frac{n+1}{2}} S(\frac{n+1}{2},s)S(\frac{n-1}{2},r-s)& \mbox{if $n$ is odd}\\ [1mm] 
\sum_{r=1}^{n-2}\binom{n}{r}r!\sum_{s=1}^{\frac{n}{2}} S(\frac{n}{2},s)S(\frac{n}{2},r-s) & \mbox{if $n$ is even}, 
\end{array}
\right.
\end{equation} 
for $n\geqslant3$. 
Obviously,  we get an explicit formula for $|\Delta_n| (=|\Sigma_n|-|\Sigma_n\setdif\Delta_n|)$ 
from (\ref{sigmasize}) and (\ref{deltasize}). 

$$\begin{array}{c|c|c|c|c|c}  
n & |\Gamma_n| & |\S_n|  & |\Delta_n| & |\Sigma_n| & |\T_n|\\ \hline 
1 & 1&  1& 1& 1& 1\\ 
2 & 2&  2& 4& 4 & 4 \\ 
3 & 1&  6& 10& 10 &27\\ 
4 & 4&  24& 128& 140 & 256\\ 
5 & 6&  120& 1911& 2171 & 3125\\ 
6 & 36&  720& 33702& 38412 & 46656\\ 
7 & 72&  5040& 651793& 742975 & 823543\\ 
8 & 576&  40320& 14237912& 15931072 & 16777216\\ 
9 &1440&  362880& 342062865 & 377624169 & 387420489\\ 
10 & 14400&  3628800& 9120890710& 9878449600 & 10000000000\\ 
\end{array}$$

\section{Generators and rank of $\Delta_n$ and $\Sigma_n$}\label{rankdeltasigma}

In the final section of this paper, we present minimal generating sets and determine the ranks of the monoids $\Delta_n$ and $\Sigma_n$. 
We also show that these monoids are both regular. 

Let 
$$
\varepsilon_n=\transf{1&2&3&\cdots&n\\1&1&3&\cdots&n}\quad\mbox{and}\quad\varepsilon'_n=\transf{1&2&3&\cdots&n\\2&2&3&\cdots&n}
$$
for $n\geqslant4$. Clearly, $\varepsilon_n,\varepsilon'_n\in\Sigma_n(\r=n-1)$. 

Let us consider the following subsets of $\Delta_n$: 
$$
\B_n=\{\beta\in\Sigma_n(\r=n-1)\mid \mbox{$1\beta=2\beta=1$ and $2\not\in\im(\beta)$}\}
$$
and 
$$
\B'_n=\{\beta\in\Sigma_n(\r=n-1)\mid \mbox{$1\beta=2\beta=2$ and $1\not\in\im(\beta)$}\}. 
$$
For any $\beta\in \B_n\cup \B'_n$, as $\beta_{|X_1}$ is even, $\beta_{|\{3,\ldots,n\}}$ is an even permutation of $\{3,\ldots,n\}$. 
Therefore, $\B_n$ and $\B'_n$ are subgroups of $\Delta_n$ with identities $\varepsilon_n$ and $\varepsilon'_n$, respectively, 
isomorphic to $\A(\{3,\ldots,n\})$. The mapping $\B_n\longrightarrow\A(\{3,\ldots,n\})$ (respectively, $\B'_n\longrightarrow\A(\{3,\ldots,n\})$), 
$\beta\mapsto \beta_{|\{3,\ldots,n\}}$ is clearly an isomorphism. 

Now, recall that, for $m\geqslant3$, it is well known that 
$$
A_m=\left\{\begin{array}{ll}
\langle (1,2,3),(1,2,\ldots,m)\rangle & \mbox{if $m$ is odd}\\
\langle (1,2,3),(2,3,\ldots,m)\rangle & \mbox{if $m$ is even}, 
\end{array}\right.
$$
so for $n\geqslant 5$ 
$$
\A(\{3,\ldots,n\})=\left\{\begin{array}{ll}
\langle (3,4,5),(3,4,\ldots,n)\rangle & \mbox{if $n$ is odd}\\
\langle (3,4,5),(4,5,\ldots,n)\rangle & \mbox{if $n$ is even}. 
\end{array}\right.
$$
Let 
$$
\beta_1=\transf{1&2  & 3&4&5 & 6&\cdots&n\\1&1  & 4&5&3 & 6&\cdots&n}, 
\quad
\beta_2=\left\{\begin{array}{ll}
\transf{1&2  & 3&\cdots&n-1&n\\1&1  & 4&\cdots&n&3} &\mbox{if $n$ is odd}\\ [3mm] 
\transf{1&2  & 3&4&\cdots&n-1&n\\1&1  &3& 5&\cdots&n&4} &\mbox{if $n$ is even}
\end{array}\right. 
$$
$$
\beta'_1=\transf{1&2  & 3&4&5 & 6&\cdots&n\\2&2  & 4&5&3 & 6&\cdots&n} 
\quad\mbox{and}\quad 
\beta'_2=\left\{\begin{array}{ll}
\transf{1&2  & 3&\cdots&n-1&n\\2&2  & 4&\cdots&n&3} &\mbox{if $n$ is odd}\\ [3mm] 
\transf{1&2  & 3&4&\cdots&n-1&n\\2&2  &3& 5&\cdots&n&4} &\mbox{if $n$ is even} 
\end{array}\right. 
$$
for $n\geqslant 5$. Then, given the isomorphisms mentioned above, we have $\B_n=\langle\beta_1,\beta_2\rangle$ and $\B'_n=\langle\beta'_1,\beta'_2\rangle$. 

\begin{lemma}\label{bes}
For $n\geqslant5$, $\B_n\cup \B'_n\subseteq\langle\Gamma_n,\varepsilon_n,\varepsilon'_n\rangle$.  
\end{lemma}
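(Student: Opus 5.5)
The plan is to reduce everything to $\B_n$ and then produce enough elements of $\B_n$ as products of $\varepsilon_n$ with elements of $\Gamma_n$.

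\emph{Reduction to $\B_n$.} For $\beta\in\B_n$, the product $\beta\varepsilon'_n$ sends $1,2$ to $1\varepsilon'_n=2$ and fixes every $i\beta\geqslant3$. Hence $\beta\varepsilon'_n\in\B'_n$ and has the same restriction to $\{3,\ldots,n\}$ as $\beta$. Since both $\B_n$ and $\B'_n$ are isomorphic to $\A(\{3,\ldots,n\})$ via restriction, this gives $\B'_n=\B_n\varepsilon'_n$. So it suffices to prove $\B_n\subseteq\langle\Gamma_n,\varepsilon_n\rangle$. Because $\B_n$ is a group with identity $\varepsilon_n$, it is enough to find elements of $\langle\Gamma_n,\varepsilon_n\rangle\cap\B_n$ whose restrictions to $\{3,\ldots,n\}$ generate $\A(\{3,\ldots,n\})$.

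\emph{First supply of elements.} Take $\tau\in\Gamma_n$ with $\{1,2\}\tau=\{1,2\}$. Then $\varepsilon_n\tau\varepsilon_n\in\B_n$, and its restriction to $\{3,\ldots,n\}$ is $\tau_{|\{3,\ldots,n\}}$. By Theorem~\ref{gama_n}, either $\tau$ fixes $1$ and $2$, so $\tau\in\PAP_n^+\cap\A_n$, or $\tau$ swaps them, so $\tau\in\PAP_n^-\setdif\A_n$; in both cases the restriction is even. Using both kinds of $\tau$, I expect these restrictions to be exactly the even permutations of $\{3,\ldots,n\}$ that either preserve or interchange the odd and even elements of $\{3,\ldots,n\}$ (interchanging is possible only when $n$ is even). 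This is a proper subgroup $K$ of $\A(\{3,\ldots,n\})$.

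\emph{Second supply of elements.} More generally, for $\sigma\in\Gamma_n$ with $2\sigma\in\{1,2\}$, the product $\varepsilon_n\sigma\varepsilon_n$ has rank $n-1$, since $\varepsilon_n$ is injective on $\im(\varepsilon_n\sigma)=[n]\setdif\{2\sigma\}$. If $1\sigma=k\geqslant3$, this element lies in the $\mathscr{R}$-class of $\varepsilon_n$ but not in $\B_n$: it sends $1,2\mapsto k$, and the unique $j\geqslant3$ with $j\sigma\in\{1,2\}$ goes to $1$. Composing two such elements, or one such element with a suitable $\varepsilon_n\sigma'\varepsilon_n$ (with $\sigma'\in\Gamma_n$ chosen to send $k$ back into $\{1,2\}$), returns to the group $\B_n$. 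The resulting restriction to $\{3,\ldots,n\}$ should mix the two parity classes. The concrete attempt: take $\sigma$ close to $\theta_n=(1,3)(2,4)$ adjusted so that $2\sigma=2$, and aim for a restriction that is a $3$-cycle such as $(3,4,5)$, or at least an element outside $K$.

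\emph{Conclusion and main obstacle.} Finally I will argue that $K$ together with one such parity-mixing element generates $\A(\{3,\ldots,n\})$. The cleanest route is to obtain $(3,4,5)$ explicitly, and then obtain the long cycle $(3,4,\ldots,n)$ or $(4,5,\ldots,n)$ either directly in the same way or by conjugating $(3,4,5)$ with elements of $K$; this would show $\beta_1,\beta_2\in\langle\Gamma_n,\varepsilon_n\rangle$. The main obstacle is this explicit construction. It requires checking that the chosen $\sigma,\sigma'$ really lie in $\Gamma_n$, which is a parity condition depending on whether $1\sigma$ is odd, and that the composite has the intended restriction. The small cases $n=5,6$ may need separate choices.
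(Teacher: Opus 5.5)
There is a genuine gap. Your reduction $\B'_n=\B_n\varepsilon'_n$ is fine, and the paper uses the same step ($\beta'_j=\beta_j\varepsilon'_n$). The problem is the next step, ``it suffices to prove $\B_n\subseteq\langle\Gamma_n,\varepsilon_n\rangle$'': this statement is \emph{false} when $n$ is odd. For odd $n$ we have $\Gamma_n=\PAP_n^+\cap\A_n$, so every element of $\Gamma_n$ maps $[n]_\mathrm{o}$ onto $[n]_\mathrm{o}$. Also $\varepsilon_n$ maps $[n]_\mathrm{o}$ into $[n]_\mathrm{o}$, since the only point it moves is the even point $2$. Hence every element of $\langle\Gamma_n,\varepsilon_n\rangle$ sends odd points to odd points, whereas $\beta_1\in\B_n$ sends $3$ to $4$.

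The same invariant shows that your ``second supply'' cannot produce a parity-mixing element for odd $n$. There $2\sigma\in\{1,2\}$ forces $2\sigma=2$, and every element of $\langle\Gamma_n,\varepsilon_n\rangle\cap\B_n$ has its restriction inside your subgroup $K$. For $n=5$, $K$ is trivial, so you obtain only $\varepsilon_5$ itself. This is exactly why the lemma includes $\varepsilon'_n$, and why Theorem~\ref{deltarank} needs two non-unit generators for odd $n$.

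The missing idea is to place $\varepsilon'_n$ in the middle of a word, so that it carries the odd point $1$ to the even point $2$. The paper does this explicitly: $\beta_1=\varepsilon_n(1,5,3)\varepsilon'_n(2,4)(3,5)\varepsilon_n(1,5,3)$. Similarly $\beta_2=\prod_i\beta_{2,i}$ with $\beta_{2,i}=\varepsilon_n(1,i,3)\varepsilon'_n(2,i-1)(3,i)\varepsilon_n(1,i,3)$ for $n$ odd, and an analogous word for $n$ even. All middle factors in these words lie in $\Gamma_n$.

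For even $n$ your reduction is legitimate. Since $\sigma_n\in\Gamma_n$ and $\varepsilon'_n=(\varepsilon_n\sigma_n^{-1})^{n-1}$, one has $\langle\Gamma_n,\varepsilon_n,\varepsilon'_n\rangle=\langle\Gamma_n,\varepsilon_n\rangle$. Even so, the proposal stops where the actual content of the lemma begins. You give no explicit $\sigma,\sigma'$, and you exhibit no element of $\B_n$ whose restriction lies outside $K$. The claim that $K$ together with one such element generates $\A(\{3,\ldots,n\})$ is asserted without proof; it would need either maximality of $K$ or explicit words for a $3$-cycle and the long cycle. As written, this is a strategy rather than a proof even in the even case.
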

\begin{proof}
We begin by noticing that $\beta_1=\varepsilon_n(1,5,3)\varepsilon'_n(2,4)(3,5)\varepsilon_n(1,5,3)\in\langle\Gamma_n,\varepsilon_n,\varepsilon'_n\rangle$. 
Next, we consider the cases where $n$ is odd and $n$ is even separately. 

First, let us assume that $n$ is odd. For each odd $i\in\{5,7,\ldots,n\}$, define 
$$
\beta_{2,i}=\transf{1&2  & 3 & 4&\cdots&i-2 & i-1&i & i+1&\cdots& n\\1&1  & i-1 & 4&\cdots&i-2 & i&3 & i+1&\cdots& n}
$$
(observe that ${\beta_{2,i}}_{|X_1}$ has $2i-8$ inversions, whence $\beta_{2,i}\in\Sigma_n(\r=n-1)$).  
Then, we have $\beta_2=\beta_{2,5}\beta_{2,7}\cdots\beta_{2,n}$ and 
$$
\beta_{2,i}= \varepsilon_n(1,i,3)\varepsilon'_n(2,i-1)(3,i)\varepsilon_n(1,i,3) \in \langle\Gamma_n,\varepsilon_n,\varepsilon'_n\rangle, 
$$
for $i\in\{5,7,\ldots,n\}$, and so $\beta_2\in\langle\Gamma_n,\varepsilon_n,\varepsilon'_n\rangle$. 

Next, we proceed in a similar manner if $n$ is even. For each even $i\in\{6,8,\ldots,n\}$, define 
$$
\beta_{2,i}=\transf{1&2 &3 & 4 & 5&\cdots&i-2 & i-1&i & i+1&\cdots& n\\1&1 &3 & i-1 & 5&\cdots&i-2 & i&4 & i+1&\cdots& n}
$$
(${\beta_{2,i}}_{|X_1}$ has $2i-10$ inversions, so $\beta_{2,i}\in\Sigma_n(\r=n-1)$).  
Since $\beta_2=\beta_{2,6}\beta_{2,8}\cdots\beta_{2,n}$ and 
$$
\beta_{2,i}= \varepsilon_n(1,3,i-1)\varepsilon'_n(2,i,4)\varepsilon_n(1,i-1,3) \in \langle\Gamma_n,\varepsilon_n,\varepsilon'_n\rangle, 
$$
for $i\in\{6,8,\ldots,n\}$, we get $\beta_2\in\langle\Gamma_n,\varepsilon_n,\varepsilon'_n\rangle$. 

Thus, in both cases, $\beta_1,\beta_2\in\langle\Gamma_n,\varepsilon_n,\varepsilon'_n\rangle$, whence $\B_n\subseteq\langle\Gamma_n,\varepsilon_n,\varepsilon'_n\rangle$. 
On the other hand, $\beta'_1=\beta_1\varepsilon'_n$ and $\beta'_2=\beta_2\varepsilon'_n$, so we also have 
$\B'_n\subseteq\langle\Gamma_n,\varepsilon_n,\varepsilon'_n\rangle$, as required. 
\end{proof} 

Observe that, for an even $n\geqslant6$, we also have 
$
\beta_1 = (\varepsilon_n\sigma_n^{-1})^2\varepsilon_n(1,5,7,\ldots,n-1)(4,6,\ldots,n)
$
and 
$$
\beta_2=(\varepsilon_n(1,n,3,4,\ldots,n-1,2))^{n-4}\varepsilon_n(1,3,\ldots,n-1)^2(4,6,\ldots,n)^2.  
$$
Since $\sigma_n, (1,5,7,\ldots,n-1)(4,6,\ldots,n), (1,n,3,4,\ldots,n-1,2), (1,3,\ldots,n-1)^2, (4,6,\ldots,n)^2\in \Gamma_n$, 
we get $\beta_1,\beta_2\in\langle\Gamma_n,\varepsilon_n\rangle$ and so $\B_n\subseteq\langle\Gamma_n,\varepsilon_n\rangle$. 
Furthermore, for an even $n\geqslant4$, we have $\varepsilon'_n=(\varepsilon_n\sigma_n^{-1})^{n-1}$ and 
$\varepsilon_n=(\varepsilon'_n\sigma_n)^{n-1}$, whence $\langle \sigma_n,\varepsilon_n\rangle=\langle \sigma_n,\varepsilon'_n\rangle$ 
and, in particular, $\langle\Gamma_n,\varepsilon_n,\varepsilon'_n\rangle=\langle \Gamma_n,\varepsilon_n\rangle=\langle \Gamma_n,\varepsilon'_n\rangle$. 

\smallskip 

Next, for each $3\leqslant i\leqslant n$, let 
$$
\lambda_i = \transf{1&2 & 3&4&\cdots &i &i+1& \cdots & n \\ 1&1 & 2&3&\cdots &i-1 &i+1& \cdots & n}. 
$$

\begin{lemma}\label{lambdai}
For $n\geqslant5$, $\lambda_3,\lambda_4,\ldots,\lambda_n\in\langle\Gamma_n,\varepsilon_n,\varepsilon'_n\rangle$.  
\end{lemma}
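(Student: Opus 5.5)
Note first that each $\lambda_i$ lies in $\Sigma_n(\r=n-1)$. Its non-trivial kernel class is $[\lambda_i]=\{1,2\}$, and $1+2$ is odd. Moreover ${\lambda_i}_{|X_1}$ is order-preserving, so it has no inversions and is even. By Proposition \ref{n-1}, $\lambda_i\in\Sigma_n$.

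Its image is $X_i$, so $\lambda_i$ lies in the same $\mathscr{R}$-class of $\T_n$ as $\varepsilon_n$ (image $X_2$) and $\varepsilon'_n$ (image $X_1$), since all three have kernel class $\{1,2\}$. The plan is a Green's-lemma argument: move from the $\mathscr{H}$-class of $\varepsilon_n$ or $\varepsilon'_n$ to the $\mathscr{H}$-class with image $X_i$ by right multiplication by a unit of $\Gamma_n$. Then correct the remaining factor using the groups $\B_n$ and $\B'_n$, which lie in $\langle\Gamma_n,\varepsilon_n,\varepsilon'_n\rangle$ by Lemma \ref{bes}.

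\textbf{Step 1 (choosing a unit $g$).} Choose $g\in\Gamma_n$ and $\varepsilon\in\{\varepsilon_n,\varepsilon'_n\}$ with $\im(\varepsilon)g=X_i$.
\begin{itemize}
\item If $i$ is even, take $\varepsilon=\varepsilon_n$ and require $2g=i$. Such a $g\in\PAP_n^+$ exists because $g$ may permute the evens arbitrarily.
\item If $i$ is odd, take $\varepsilon=\varepsilon'_n$ and require $1g=i$, again with $g\in\PAP_n^+$.
\end{itemize}
By Theorem \ref{gama_n}, such $g$ belongs to $\Gamma_n$ precisely when $g\in\A_n$. We can always arrange this by composing with a transposition of two same-parity points not involved in the prescribed value. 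For $n\geqslant5$ there is enough room: $\{1,3,5\}$ contains two odd points other than the one fixed or prescribed, and when $2g=i$ is prescribed we can use two odd points instead. Then $\varepsilon g\in\langle\Gamma_n,\varepsilon_n,\varepsilon'_n\rangle$, with kernel class $\{1,2\}$ and image $X_i$.

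\textbf{Step 2 (the correcting factor).} By Green's lemma, left multiplication gives a bijection from the $\mathscr{H}$-class $H$ of $\varepsilon$ onto the $\mathscr{H}$-class of $\varepsilon g$. Concretely, $\lambda_i=h\,\varepsilon g$, where $h=\lambda_i g^{-1}$. This $h$ has kernel class $\{1,2\}$ and image $\im(\varepsilon)$, and $h=h\varepsilon$, so $h\in H$.

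It remains to show $h\in\Sigma_n$; then $h\in\B_n$ (if $\varepsilon=\varepsilon_n$) or $h\in\B'_n$ (if $\varepsilon=\varepsilon'_n$). For this, use Lemma \ref{lema2} on restrictions to $X_1$:
$$
h_{|X_1}=(\lambda_i)_{|X_1}\,(g^{-1})_{|X_i}.
$$
The factor $(\lambda_i)_{|X_1}$ is even. The factor $(g^{-1})_{|X_i}$ is even because $g^{-1}\in\Gamma_n$. Hence $h_{|X_1}$ is even, and by Proposition \ref{n-1}(2a), $h\in\Sigma_n$.

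Therefore $\lambda_i=h\,\varepsilon\, g\in\langle\Gamma_n,\varepsilon_n,\varepsilon'_n\rangle$, using Lemma \ref{bes}.

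\textbf{Main obstacle.} The delicate step is Step 1: producing $g$ inside $\Gamma_n$ and not merely in $\PAP_n$. This needs a careful sign adjustment for small $n$, in particular $n=5$, where the even points are only $2$ and $4$; this is why odd points are used for the adjustment. The cases $i=3$ and $i=n$ should also be checked for such collisions. If this fails for some edge case, I would instead write $\lambda_i$ explicitly as $\varepsilon g_1\varepsilon' g_2$, as in the proof of Lemma \ref{bes}.
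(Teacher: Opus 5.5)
Your Step 1 is fine. The gap is in Step 2, at ``then $h\in\B_n$ (if $\varepsilon=\varepsilon_n$) or $h\in\B'_n$''. Having kernel class $\{1,2\}$, image $\im(\varepsilon)$ and lying in $\Sigma_n$ only puts $h$ in the group of elements of the $\mathscr{H}$-class of $\varepsilon$ that belong to $\Sigma_n$. That group has order $\frac{(n-1)!}{2}$. The group $\B_n\cong\A(\{3,\ldots,n\})$ is only a subgroup of index $n-1$ in it, cut out by the extra condition $1h=2h=1$ (resp.\ $1h=2h=2$ for $\B'_n$), and Lemma \ref{bes} says nothing about the remaining elements. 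Since $1h=1g^{-1}$, you additionally need $1g=1$ (for $\varepsilon_n$) or $2g=1$ (for $\varepsilon'_n$). For even $i$ this is easily repaired: take $g=(2,i)(3,5)\in\Gamma_n$. Then your parity argument gives $h=\lambda_ig^{-1}\in\B_n$, and $\lambda_i=hg$.

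For odd $i$, however, no choice of $g$ works. You would need $1g=1$ and $2g=i$ (with $\varepsilon_n$), or $1g=i$ and $2g=1$ (with $\varepsilon'_n$). In both cases $g$ sends an odd point and an even point to odd points, so $g\notin\PAP_n\supseteq\Gamma_n$.

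Conceptually, suppose $\alpha$ has rank $n-1$, value $b$ on its non-trivial kernel class, and image $X_m$. Then $\sigma\alpha\tau$ (with $\sigma,\tau\in\Gamma_n$) has value $b\tau$ and image $X_{m\tau}$. Since $\tau\in\PAP_n$ preserves or swaps all parities, the parity of $b+m$ is invariant. It is odd for every element of $\B_n\cup\B'_n$, but even for $\lambda_i$ with $i$ odd. So $\lambda_i\notin\Gamma_n(\B_n\cup\B'_n)\Gamma_n$, and you genuinely need a product with at least two non-unit factors. Your fallback $\varepsilon g_1\varepsilon' g_2$ points the right way, but that is precisely the part not carried out.

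The paper does it in two steps. First, $\lambda_3=\varepsilon_n(1,5,3)\varepsilon'_n(1,3,5)$. Then, by induction, $\lambda_i=\lambda_{i-1}e_i$, where $e_i$ is the idempotent collapsing $\{i-1,i\}$ onto $i-1$. This idempotent is written as $e_i=\pi\varepsilon_n\pi$ with $\pi=(1,i-1)(2,i)$ for even $i$, and as $e_i=\pi\varepsilon'_n\pi$ with $\pi=(1,i)(2,i-1)$ for odd $i$; in both cases $\pi\in\Gamma_n$.
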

\begin{proof}
We have $\lambda_3=\varepsilon_n(1,5,3)\varepsilon'_n(1,3,5)$ and 
$$
\lambda_i = \left\{\begin{array}{ll}
\lambda_{i-1}(1,i-1)(2,i)\varepsilon_n(1,i-1)(2,i) & \mbox{if $i$ is even}\\[1mm] 
\lambda_{i-1}(1,i)(2,i-1)\varepsilon'_n(1,i)(2,i-1) & \mbox{if $i$ is odd}, 
\end{array}\right. 
$$
for $4\leqslant i\leqslant n$. 
Then, the lemma follows by induction on $i$. 
\end{proof} 

Observe that, for an even $n\geqslant6$, we also explicitly have $\lambda_i=\varepsilon_n(\sigma_n^{-1}\varepsilon_n)^{i-2}\sigma_n^{i-2}\in\langle\Gamma_n,\varepsilon_n\rangle$ 
for $3\leqslant i\leqslant n$.

\begin{theorem}\label{deltarank}
For $n\geqslant4$, $\Delta_n=\langle \Gamma_n, \varepsilon_n\rangle=\langle \Gamma_n, \varepsilon'_n\rangle$, if $n$ is even, 
and $\Delta_n=\langle \Gamma_n, \varepsilon_n,\varepsilon'_n\rangle$, if $n$ is odd. 
Moreover, the rank of $\Delta_n$ is 
$$
\left\{
\begin{array}{ll}
2 & \mbox{if $n=4$}\\
4 & \mbox{if $n$ is odd, $n\geqslant5$ and $n\neq7$}\\
3 & \mbox{if $n$ is even and $n\geqslant6$}\\
5 & \mbox{if $n=7$}. 
\end{array}
\right.
$$
\end{theorem}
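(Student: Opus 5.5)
The plan has two parts: first the generating sets, then the rank bounds.

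\textbf{Generation.} By definition $\Delta_n=\langle\Sigma_n(\r\geqslant n-1)\rangle=\langle\Gamma_n\cup\Sigma_n(\r=n-1)\rangle$, so it suffices to show $\Sigma_n(\r=n-1)\subseteq\langle\Gamma_n,\varepsilon_n,\varepsilon'_n\rangle$. The case $n=4$ will be checked directly. The even case then follows, since the remark after Lemma \ref{bes} gives $\langle\Gamma_n,\varepsilon_n,\varepsilon'_n\rangle=\langle\Gamma_n,\varepsilon_n\rangle=\langle\Gamma_n,\varepsilon'_n\rangle$.

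Take $\alpha\in\Sigma_n(\r=n-1)$ with $[\alpha]=\{x,y\}$. By Proposition \ref{n-1}, $x+y$ is odd. Lemma \ref{delta2} (with $n\geqslant4$, $\r(\alpha)=n-1$) gives $\sigma,\tau\in\Gamma_n$ with $1\sigma\alpha\tau=2\sigma\alpha\tau=c\in\{1,2\}$. So we may assume $1\alpha=2\alpha=c$ and $[\alpha]=\{1,2\}$.

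Let $b$ be the missing image point. We aim to write $\alpha=\beta\lambda$, where $\beta$ lies in $\B_n\cup\B'_n$, possibly composed with some $\lambda_i$ or with a permutation of $\Gamma_n$ fixing the relevant points, and $\lambda$ is a suitable transformation in $\{\lambda_i\}\cup\{\lambda_i^{-1}\text{-type}\}$ that shifts the gap. Concretely:
\begin{itemize}
\item If $c=1$ and $b=2$, the $\mathscr{H}$-class condition together with Proposition \ref{n-1}(2a) forces $\alpha\in\B_n$. Similarly, if $c=2$ and $b=1$, then $\alpha\in\B'_n$.
\item If $c=1$ and $b>2$, compose an element of $\B_n$ with $\lambda_b$-like maps, or better, with a permutation from $\Gamma_n$ moving the gap, using Green's lemma as in Proposition \ref{n-1}(2c).
\end{itemize}
Since exactly half of each $\mathscr{H}$-class lies in $\Sigma_n$, and left multiplication by $\B_n$ permutes that half, it suffices to exhibit a single element of $\Sigma_n$ in each $\mathscr{H}$-class $\{1,2\}$-kernel/image $X_b$. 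The maps $\lambda_i$ of Lemma \ref{lambdai}, multiplied by elements of $\Gamma_n$ (for instance $\theta_n$-type swaps, or $\sigma_n$ when $n$ is even), provide such representatives. Lemmas \ref{bes} and \ref{lambdai} then complete the inclusion. For $n=4$, a direct computation shows $\Delta_4=\langle\sigma_4,\varepsilon_4\rangle$.

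\textbf{Rank.} The upper bounds come from combining generation with Theorem \ref{gamman}. The even case gives $2+1=3$, the odd case $n\neq7$ gives $2+2=4$, $n=7$ gives $3+2=5$, and $n=4$ gives $1+1=2$.

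For the lower bounds, note that the group of units is $\Gamma_n$ and $\Delta_n\setminus\Gamma_n$ is an ideal. Hence any generating set contains a generating set of $\Gamma_n$, which needs $\rank(\Gamma_n)$ elements, plus at least one non-unit. This already settles the even cases.

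For $n$ odd, we need two non-units of rank $n-1$. The key invariant is the class of the image-missing point and the kernel value modulo the action of $\Gamma_n\subseteq\PAP_n$, which preserves parity classes when $n$ is odd. Define $\alpha$ to be of \emph{odd type} or \emph{even type} according to the parity of the common image value $x\alpha=y\alpha$ of its nontrivial kernel class. Then:
\begin{itemize}
\item Right multiplication by $\Gamma_n$ preserves this type, since $\PAP_n^+=\PAP_n$ when $n$ is odd.
\item A product of rank $n-1$ maps with rank $n-1$ result has type determined by its last rank $n-1$ factor composed with units.
\end{itemize}
Since both $\varepsilon_n$ (value $1$, odd) and $\varepsilon'_n$ (value $2$, even) lie in $\Delta_n$, both types must occur among the generators. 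This forces at least two non-units.

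The main obstacle I expect is this type-preservation argument: one must verify that in a product of rank $n-1$, the collapsed value is the image under later units and rank $n-1$ maps in a parity-preserving way. The factors after the one creating the collapse must be permutations in $\Gamma_n$ (otherwise the rank drops), so the argument should reduce to that observation.
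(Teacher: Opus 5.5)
\textbf{Gap 1: the lower bound for odd $n$.} This argument uses an invariant that is not preserved. The parity of the collapsed value $x\alpha=y\alpha$ is not determined by the last rank-$(n-1)$ factor. Your justification, ``the factors after the one creating the collapse must be permutations, otherwise the rank drops'', is false: a further rank-$(n-1)$ factor loses no rank whenever its kernel class contains the current missing image point.

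For example, take $n=5$ and $\beta=\smallt{1&2&3&4&5\\2&1&1&5&4}$. Its kernel class is $\{2,3\}$ and $\beta_{|X_3}$ has two inversions, so $\beta\in\Sigma_5(\r=4)$. The product $\varepsilon_5\beta=\smallt{1&2&3&4&5\\2&2&1&5&4}$ has rank $4$ and collapsed value $2$, although both factors have collapsed value $1$. So your two ``types'' do not force two non-unit generators.

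What does work is the other quantity you mention and then drop: the missing image point. Suppose $\varepsilon=\alpha_1\cdots\alpha_k$ has rank $n-1$, $\alpha_i$ is the last non-unit factor, and $\theta=\alpha_{i+1}\cdots\alpha_k\in\Gamma_n$. Then $\im(\varepsilon)\subseteq\im(\alpha_i)\theta$, and both sets have $n-1$ elements. Hence $\theta$ sends the point missing from $\im(\alpha_i)$ to the point missing from $\im(\varepsilon)$. It preserves parity, because $\Gamma_n\subseteq\PAP_n^+$ for odd $n$. Since $\varepsilon_n$ misses $2$ and $\varepsilon'_n$ misses $1$, every generating set contains two non-units.

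Be aware that the paper's own argument at this point also works with the collapsed value. It takes preimages of both points of $[\alpha_i]$ under $\alpha_1\cdots\alpha_{i-1}$, and these exist only when that prefix is a unit. So the paper's proof does not close this step either; the image-based version above does.

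\textbf{Gap 2: generation for odd $n$.} After normalising to $1\alpha=2\alpha=c$, your orbit idea is sound for $c=1$. The elements of $\Sigma_n$ with $1\alpha=2\alpha=1$ and image $X_b$ form a single $\B_n$-orbit under left multiplication, with $\varepsilon_n$ and $\lambda_b$ as representatives. This needs the normalisation: $|\B_n|=(n-2)!/2$, while half an $\mathscr{H}$-class has $(n-1)!/2$ elements, so ``$\B_n$ permutes that half'' is not transitive without it. Since Lemma~\ref{delta2} always allows $c=1$ for even $n$, your even case is fine.

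For odd $n$, however, you must also handle $c=2$ with $1\in\im(\alpha)$. Your proposed representatives cannot reach this case. Any $\pi\lambda_i\tau$ with $\pi,\tau\in\Gamma_n$ has collapsed value $1\tau$, which is odd (and $\sigma_n\notin\Gamma_n$ here). You need an extra factorisation through $\varepsilon'_n$ followed by a renormalisation. The paper does it as follows:
\begin{itemize}
\item If the missing point $b$ is even, it writes $\alpha=(\alpha\nu'_b)\nu_b$ with $\alpha\nu'_b\in\B_n$ and $\nu_b=\varepsilon'_n(2,b)(3,5)\varepsilon_n(2,b)(3,5)$.
\item If $b$ is odd, it writes $\alpha=(\alpha\xi'_b)\xi_b$. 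Here $\xi_b$ is of type $c=1$, and $\alpha\xi'_b$ has odd collapsed value $b$, so Lemma~\ref{delta2} moves it to $c=1$.
\end{itemize}
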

\begin{proof}
Let $\alpha\in\Sigma_n(\r=n-1)$. Then, by Lemma \ref{delta2}, there exist $\sigma,\tau\in\Gamma_n$ such that 
$(1)\sigma\alpha\tau=(2)\sigma\alpha\tau=c\in\{1,2\}$. Let $\beta=\sigma\alpha\tau$. 
Notice that $\beta\in\Sigma_n(\r=n-1)$. 
Our goal is to prove that $\beta\in \langle\Gamma_n,\varepsilon_n,\varepsilon'_n\rangle$. 

First of all, we consider the case $n=4$ separately.  Note that, since $n$ is even, we can assume that $c=1$.  
Since $\beta_{|X_1}$ is even, it follows that 
$$
\beta\in\left\{ \transf{1&2&3&4\\1&1&2&3}, \transf{1&2&3&4\\1&1&2&4}, \varepsilon_4=\transf{1&2&3&4\\1&1&3&4}\right\}. 
$$
Then, as  
$$
\transf{1&2&3&4\\1&1&2&4}=\varepsilon_4\sigma_4^{-1}\varepsilon_4\sigma_4 \quad\mbox{and}\quad 
\transf{1&2&3&4\\1&1&2&3}=(\varepsilon_4\sigma_4^{-1})^2\varepsilon_4\sigma_4^2, 
$$
we get $\beta\in\langle \Gamma_4,\varepsilon_4\rangle$. 

Now, suppose that $n\geqslant5$. Let us consider the two possible cases for $c$. 

\noindent{\sc case} $c=1$. If $2\not\in\im(\beta)$, then $\beta\in \B_n$, and so $\beta\in \langle\Gamma_n,\varepsilon_n,\varepsilon'_n\rangle$, 
by Lemma \ref{bes}. So, let us suppose that $2\in\im(\beta)$. Then, $\im(\beta)=[n]\setdif\{i\}$ for some $3\leqslant i\leqslant n$. 
Let 
$$
\lambda'_i=\transf{1& 2&3&\cdots&i-1& i&\cdots&n\\1& 3&4&\cdots&i& i&\cdots&n}. 
$$
Clearly, $\lambda'_i\in\Sigma_n(\r=n-1)$, whence $\beta\lambda'_i\in\Delta_n$, and $\beta\lambda'_i\lambda_i=\beta$. 
Moreover, $(1)\beta\lambda'_i=(2)\beta\lambda'_i=1$ and $\im(\beta\lambda'_i)=[n]\setdif\{2\}$, 
whence $\beta\lambda'_i\in \B_n$ and so $\beta\lambda'_i\in \langle\Gamma_n,\varepsilon_n,\varepsilon'_n\rangle$, 
by Lemma \ref{bes}. 
Since $\lambda_i\in\langle\Gamma_n,\varepsilon_n,\varepsilon'_n\rangle$, 
by Lemma \ref{lambdai}, we get $\beta= (\beta\lambda'_i)\lambda_i\in\langle\Gamma_n,\varepsilon_n,\varepsilon'_n\rangle$. 

\noindent{\sc case} $c=2$. If $1\not\in\im(\beta)$, then $\beta\in \B'_n$, and so $\beta\in \langle\Gamma_n,\varepsilon_n,\varepsilon'_n\rangle$, 
by Lemma \ref{bes}. So, suppose that $1\in\im(\beta)$. Then, in this case, we also have $\im(\beta)=[n]\setdif\{i\}$ for some $3\leqslant i\leqslant n$. 

First, suppose that $i$ is odd and let 
$$
\xi_i=\transf{1& 2&3&\cdots&i-1& i& i+1&\cdots&n\\1& 1&3&\cdots&i-1&2& i+1&\cdots&n} 
\quad\mbox{and}\quad 
\xi'_i=\transf{1& 2&3&\cdots&i-1& i& i+1&\cdots&n\\2& i&3&\cdots&i-1&i& i+1&\cdots&n}. 
$$ 
Then, we have $\beta = \beta\xi'_i\xi_i$. 
Since ${\xi_i}_{|X_1}$ has $i-3$ inversions, it is clear that $\xi_i\in\Sigma_n(\r=n-1)$, 
whence $\xi_i\in \langle\Gamma_n,\varepsilon_n,\varepsilon'_n\rangle$, by {\sc case} $c=1$. 
On the other hand, as ${\xi'_i}_{|X_2}$ has no inversions, $2\xi'_i=i\xi'_i=i$, $\im(\xi'_i)=[n]\setdif\{1\}$ and $2+i$ is odd, 
we get  $\xi'_i\in\Sigma_n(\r=n-1)$. 
Moreover, we also have $\im(\beta\xi'_i)=[n]\setdif\{1\}$, which allow us to conclude that $\beta\xi'_i\in\Sigma_n(\r=n-1)$ as well. 
In addition, since $(1)\beta\xi'_i=(2)\beta\xi'_i=i$ is odd, Lemma \ref{delta2} guarantees that 
there exist $\sigma_0,\tau_0\in\Gamma_n$ such that 
$(1)\sigma_0(\beta\xi'_i)\tau_0=(2)\sigma_0(\beta\xi'_i)\tau_0=1$. 
Hence, by {\sc case} $c=1$, we get $\sigma_0\beta\xi'_i\tau_0\in  \langle\Gamma_n,\varepsilon_n,\varepsilon'_n\rangle$ 
and so $\beta = \beta\xi'_i\xi_i = \sigma_0^{-1}(\sigma_0\beta\xi'_i\tau_0)\tau_0^{-1}\xi_i\in  \langle\Gamma_n,\varepsilon_n,\varepsilon'_n\rangle$. 

Next, we suppose that $i$ is even.  In this case, we take 
$$
\nu_i=\transf{1& 2&3&\cdots&i-1& i& i+1&\cdots&n\\2& 2&3&\cdots&i-1&1& i+1&\cdots&n} 
\quad\mbox{and}\quad 
\nu'_i=\transf{1& 2&3&\cdots&i-1& i& i+1&\cdots&n\\i& 1&3&\cdots&i-1&i& i+1&\cdots&n} 
$$ 
and also get $\beta = \beta\nu'_i\nu_i$. Since ${\nu_i}_{|X_1}$ has $i-2$ inversions and ${\nu'_i}_{|X_1}$ has no inversions, 
we have $\nu_i,\nu'_i\in\Sigma_n(\r=n-1)$. It follows that $\beta\nu'_i\in\Sigma_n(\r=n-1)$. 
Moreover, as $(1)\beta\nu'_i=(2)\beta\nu'_i=1$ and $\im(\beta\nu'_i)=[n]\setdif\{2\}$, 
we obtain $\beta\nu'_i\in \B_n$ and so 
$\beta\nu'_i\in \langle\Gamma_n,\varepsilon_n,\varepsilon'_n\rangle$, by Lemma \ref{bes}. 
On the other hand, we have 
$\nu_i=\varepsilon'_n(2,i)(3,5)\varepsilon_n(2,i)(3,5)\in \langle\Gamma_n,\varepsilon_n,\varepsilon'_n\rangle$,
so $\beta=(\beta\nu'_i)\nu_i\in \langle\Gamma_n,\varepsilon_n,\varepsilon'_n\rangle$. 

Thus, we have shown in all cases that $\beta\in \langle\Gamma_n,\varepsilon_n,\varepsilon'_n\rangle$. 
Consequently, $\alpha=\sigma^{-1}\beta\tau^{-1}\in \langle\Gamma_n,\varepsilon_n,\varepsilon'_n\rangle$. 
Therefore, $\Delta_n=\langle\Gamma_n,\varepsilon_n,\varepsilon'_n\rangle$. 
If $n$ is even,  by what we have already observed, we also get  
$\Delta_n=\langle\Gamma_n,\varepsilon_n\rangle=\langle\Gamma_n,\varepsilon'_n\rangle$. 

All that remains now is to discuss the rank of $\Delta_n$. 
Clearly, for an even $n$, we have $\rank(\Delta_n) = \rank(\Gamma_n) + 1$. 
On the other hand, for an odd $n$, if we show that we need at least two generators belonging to $\Delta_n \setdif \Gamma_n$, 
the stated result follows from Theorem \ref{gamman}. 

Suppose that $n$ is odd. Recall that, in this case, for all $i\in[n]$ and $\theta\in\Gamma_n$, $i$ and $i\theta$ have the same parity. 
Let $C$ be a set of generators of $\Delta_n$. Let $c\in\{1,2\}$ and 
$
\varepsilon=\smallt{1&2&3&\cdots&n\\c&c&3&\cdots&n}. 
$
Let $\alpha_1,\ldots,\alpha_k\in C$ ($k\in\N$) be such that $\varepsilon=\alpha_1\cdots\alpha_k$. 
Then, $\alpha_1,\ldots,\alpha_k\in\Sigma_n(\r\geqslant n-1)$ and there exists $1\leqslant i\leqslant k$ such that 
$\alpha_i\in\Sigma_n(\r=n-1)$ and $\alpha_{i+1},\ldots,\alpha_k\in\Gamma_n$. Let $\theta=\alpha_{i+1}\cdots\alpha_k\in\Gamma_n$ 
($\theta$ can be the identity, e.g. if $i=k$). 
Let $[\alpha_i]=\{x,y\}$ and let $x_0,y_0\in[n]$ be such that $(x_0)\alpha_1\cdots\alpha_{i-1}=x$ and $(y_0)\alpha_1\cdots\alpha_{i-1}=y$ 
(if $i=1$, then $\alpha_1\cdots\alpha_{i-1}=\iota_n$, and so $x_0=x$ and $y_0=y$). 
Then, 
$$
x_0\varepsilon = (x_0)\alpha_1\cdots\alpha_{i-1}\alpha_i\theta=(x)\alpha_i\theta=
(y)\alpha_i\theta=(y_0)\alpha_1\cdots\alpha_{i-1}\alpha_i\theta=y_0\varepsilon,
$$
whence $\{x_0,y_0\}=\{1,2\}$ and so $x\alpha_i\theta=c$. Thus, $x\alpha_i=y\alpha_i$ and $c$ have the same parity.  
It follows that $C$ must have at least two elements belonging to $\Sigma_n(\r=n-1)$, as required. 
\end{proof}

Some facts established in the proof of Theorem \ref{deltarank} help us prove the following result. 

\begin{proposition}
The monoids $\Delta_n$ and $\Sigma_n$ are regular. 
\end{proposition}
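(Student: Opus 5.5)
We have to show that every $\alpha$ in $\Delta_n$ (respectively, $\Sigma_n$) has an inverse $\alpha'$ in the same monoid with $\alpha\alpha'\alpha=\alpha$. The small cases $n\leqslant3$ are trivial or were listed explicitly: $\Delta_3=\Sigma_3$ is a finite monoid one can check directly, and for $n\leqslant2$ the monoid is $\T_n$. So we assume $n\geqslant4$. Elements of $\Gamma_n$ are units and hence regular. For $\alpha$ of rank at most $n-2$ in $\Sigma_n$, we have $\T_n(\r\leqslant n-2)\subseteq\Sigma_n$, and $\T_n$ is regular. Any $\alpha'\in\T_n$ with $\alpha\alpha'\alpha=\alpha$ can be chosen to have rank $\r(\alpha)\leqslant n-2$: send each $b\in\im(\alpha)$ to a chosen preimage, and send everything else to one fixed such preimage. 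Then $\alpha'\in\Sigma_n$.

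For $\Delta_n$ and rank $\leqslant n-2$, we need additionally $\alpha'\notin\X$. Take $\alpha'$ as above, but send all of $[n]\setdif\im(\alpha)$ (at least $2$ points) to the chosen preimage $p$ of some $b$. If $n\geqslant4$ we can arrange a kernel class of $\alpha'$ with mixed parities. The class $p\alpha'^{-1}$ contains $b$ together with the complement of $\im(\alpha)$. If this complement contains an element of parity different from $b$, we are done. Otherwise all of the complement has the parity of $b$. In that case choose instead a different image point $b'$ of the opposite parity, which exists since $\im(\alpha)$ has $\geqslant2$ elements and the complement is nonempty; otherwise adjust so that the complement is split between the preimages of two image points $b$ and $b'$ of suitable parity. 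The point is that the complement points can be distributed freely among classes containing image points, and the image contains both parities unless $n$ is small. I expect only routine parity bookkeeping here, since this step works entirely in $\T_n$.

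The main step is rank $n-1$. Let $\alpha\in\Sigma_n(\r=n-1)$ (this set is the same in $\Delta_n$). By Lemma \ref{delta2} there are $\sigma,\tau\in\Gamma_n$ with $\beta=\sigma\alpha\tau$ satisfying $1\beta=2\beta=c\in\{1,2\}$. Since regularity is preserved under multiplying by units, it suffices to treat $\beta$. Following the proof of Theorem \ref{deltarank}, $\beta$ lies in the $\mathscr{L}$-class structure there, as follows.

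\begin{itemize}
\item If $c=1$ and $2\notin\im(\beta)$, or $c=2$ and $1\notin\im(\beta)$, then $\beta\in\B_n$ or $\B'_n$. These are groups, so $\beta$ has a group inverse inside $\Delta_n$.
\item If $c=1$ and $\im(\beta)=[n]\setdif\{i\}$, then $\beta\lambda'_i\in\B_n$ and $\beta=(\beta\lambda'_i)\lambda_i$. Let $\gamma$ be the inverse of $\beta\lambda'_i$ in $\B_n$, and try $\beta'=\lambda'_i\gamma$. Then
\[
\beta\beta'\beta=(\beta\lambda'_i)\gamma(\beta\lambda'_i)\lambda_i=\beta\lambda'_i\lambda_i=\beta,
\]
and $\beta'\in\Delta_n$ as a product of elements of $\Delta_n$.
\item The case $c=2$ is handled analogously, using the factorizations $\beta=\beta\xi'_i\xi_i$ or $\beta=\beta\nu'_i\nu_i$ from the proof of Theorem \ref{deltarank} to reduce to the previous cases.
\end{itemize}

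Alternatively, and more uniformly, one can construct $\beta'$ directly. Pick $\beta'$ of rank $n-1$ mapping $\im(\beta)$ back to a transversal, with the missing point sent into a class so that $[\beta']$ has odd sum. Then use the freedom of composing with the transposition $(1,2)$, which fixes $\beta$ on the left since $1\beta=2\beta$, to fix the parity of $\beta'_{|X_a}$ by Proposition \ref{n-1}. Making this parity adjustment work is where I expect the only real obstacle.
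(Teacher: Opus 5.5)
Your main argument is essentially the paper's proof: units are regular; for rank at most $n-2$ you build a same-rank inverse that sends $[n]\setminus\im(\alpha)$ into a single kernel class of mixed parity; and for rank $n-1$ you normalize by Lemma~\ref{delta2} and transfer regularity from the groups $\B_n$, $\B'_n$ along the factorizations in the proof of Theorem~\ref{deltarank}. Only the justification for $b'$ needs fixing: if the complement of $\im(\alpha)$ has only the parity of $b$, then every element of the other parity lies in $\im(\alpha)$, so $b'$ exists; the size of $\im(\alpha)$ is irrelevant and can even be $1$.

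Your ``uniform'' alternative in fact has no obstacle, and composing with $(1,2)$ would not change the parity anyway. With $\im(\beta)=[n]\setminus\{i\}$, the restriction $\beta'_{|X_i}$ is the inverse of $\beta$ restricted to the transversal $X_1$ or $X_2$, hence even. Proposition~\ref{n-1} then gives $\beta'\in\Sigma_n(\r=n-1)$ at once. The same works for any $\alpha\in\Sigma_n(\r=n-1)$ with $[\alpha]=\{x,y\}$, using the transversal $X_x$, so this is a shorter route than the paper's that avoids Lemma~\ref{delta2} and the case analysis entirely.
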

\begin{proof}
Let us begin by noting that, regarding the proof of the regularity of $\Sigma_n$, since it is well known that $\T_n$ is regular, 
it suffices to show that the elements of $\Sigma_n(\r=n-1)$ are regular in $\Delta_n$ (and thus in $\Sigma_n$). 
Note also that every group-element is regular; in particular, the elements of $\Gamma_n$ are regular (in $\Delta_n$ and in $\Sigma_n$). 
In addition, observe that multiplying an element of a monoid by a unit preserves regularity. 

Let $\alpha\in\Sigma_n(\r=n-1)$. Then, by Lemma \ref{delta2}, there exist $\sigma,\tau\in\Gamma_n$ such that 
$(1)\sigma\alpha\tau=(2)\sigma\alpha\tau=c\in\{1,2\}$. Let $\beta=\sigma\alpha\tau$. 
So, we have $\beta\in\Sigma_n(\r=n-1)$ and $\alpha$ is regular if and only if $\beta$ is regular. 
 
If $n=4$, then we can assume that $c=1$, whence 
$\beta\in\left\{ \smallt{1&2&3&4\\1&1&2&3}, \smallt{1&2&3&4\\1&1&2&4}, \varepsilon_4=\smallt{1&2&3&4\\1&1&3&4}\right\}$. 
Since $\varepsilon_4^2=\varepsilon_4$, 
$\smallt{1&2&3&4\\1&1&2&3}\smallt{1&2&3&4\\1&3&4&4}\smallt{1&2&3&4\\1&1&2&3}=\smallt{1&2&3&4\\1&1&2&3}$,  
$\smallt{1&2&3&4\\1&1&2&4}\smallt{1&2&3&4\\1&3&4&4}\smallt{1&2&3&4\\1&1&2&4}=\smallt{1&2&3&4\\1&1&2&4}$
and $\smallt{1&2&3&4\\1&3&4&4}\in\Sigma_4(\r=3)$, 
then $\beta$ is regular and so $\alpha$ is regular (whether we consider $\alpha\in\Delta_4$ or $\alpha\in\Sigma_4$). 

Now, let us suppose that $n\geqslant5$ and consider the notation and scenarios from the proof of Theorem \ref{deltarank}.  
If $c=1$ and $2\not\in\im(\beta)$, then $\beta\in \B_n$, whence $\beta$ is a group-element, so $\beta$ is regular. 
If $c=1$ and $2\in\im(\beta)$, then $\beta=(\beta\lambda'_i)\lambda_i$ with $\lambda_i,\lambda'_i\in\Sigma_n(\r=n-1)$ and  
$\beta\lambda'_i\in \B_n$, whence $\beta\lambda'_i$ is regular and $\mathscr{R}$-related to $\beta$, so $\beta$ is regular. 
If $c=2$ and $1\not\in\im(\beta)$, then $\beta\in \B'_n$, whence $\beta$ is a group-element and so $\beta$ is regular. 
If $c=2$, $1\in\im(\beta)$ and $i$ is even, then $\beta=(\beta\nu'_i)\nu_i$ with $\nu_i,\nu'_i\in\Sigma_n(\r=n-1)$ and  
$\beta\nu'_i\in \B_n$, whence $\beta\nu'_i$ is regular and $\mathscr{R}$-related to $\beta$, so $\beta$ is regular. 
Let us consider the remaining case, i.e. $c=2$, $1\in\im(\beta)$ and $i$ is odd. 
In this situation in hand, we have $\beta=(\beta\xi'_i)\xi_i$ with $\xi_i,\xi'_i\in\Sigma_n(\r=n-1)$.  
Moreover, $\sigma_0(\beta\xi'_i)\tau_0\in\Sigma_n(\r=n-1)$ falls under the case $c=1$, 
whence $\sigma_0(\beta\xi'_i)\tau_0$ is regular, as previously seen. 
Hence, $\beta\xi'_i$ is regular, since $\sigma_0,\tau_0\in\Gamma_n$, and so $\beta$ is regular, since $\beta\mathscr{R}\beta\xi'_i$. 
Thus, we have shown in all cases that $\beta$ is regular and so $\alpha$ is regular (whether we consider $\alpha\in\Delta_n$ or $\alpha\in\Sigma_n$). 

Next, for $n\geqslant4$, let us take $\alpha\in\Delta_n(\r\leqslant n-2)$. Clearly, there exist $x_0\in\im(\alpha)$ and $y_0\in[n]\setdif\im(\alpha)$ such that 
$x_0+y_0$ is odd. Let $X$ be a transversal of $\ker(\alpha)$. Define $\alpha'\in\T_n$ by: $x\alpha'$ is the element of $x\alpha^{-1}\cap X$, if $x\in\im(\alpha)$, and 
$x\alpha'$ is the element of $x_0\alpha^{-1}\cap X$, if $x\in[n]\setdif\im(\alpha)$. Then, clearly, $\r(\alpha')=\r(\alpha)$ and $\alpha=\alpha\alpha'\alpha$. 
Moreover, $x_0\alpha'=y_0\alpha'$, whence $\alpha'\in\Delta_n(\r\leqslant n-2)$ and so $\alpha$ is regular in $\Delta_n$, as required.  
\end{proof}

We now turn our attention to the monoid $\Sigma_n$. 
Let 
$$
\gamma_n=\transf{1&2&3&4&5&\cdots&n\\1&2&1&2&3&\cdots&n-2}\quad\mbox{for $n\geqslant4$}, 
$$
$$
\gamma'_n=\transf{1&2&3&4&5&6&7&8&\cdots&n\\1&3&1&4&2&5&2&6&\cdots&n-2}\quad\mbox{for $n\geqslant7$}, 
$$
$$
\gamma''_n=\transf{1&2&3&4&5&6&7&8&9&\cdots&n\\3&1&4&1&5&2&6&2&7&\cdots&n-2}\quad\mbox{for $n\geqslant8$}, 
$$
$$
\delta_n=\transf{1&2&3&4&5&6&\cdots&n\\1&2&1&3&1&4&\cdots&n-2}\quad\mbox{for $n\geqslant5$}, 
$$
and 
$$
\delta'_n=\transf{1&2&3&4&5&6&7&\cdots&n\\2&1&3&1&4&1&5&\cdots&n-2}\quad\mbox{for $n\geqslant6$}. 
$$
Clearly, all of these elements belong to $\Sigma_n(\r=n-2)\setdif\Delta_n$. 
Furthermore, the following lemma shows that their kernels, up to multiplication by a unit, are \textit{representatives} of the elements of $\Sigma_n(\r=n-2)\setdif\Delta_n$. 

\begin{lemma}\label{sigma1}
Let $\alpha,\beta\in\Sigma_n(\r=n-2)\setdif\Delta_n$ be such that one of the following two situations occurs: 
\begin{enumerate}
\item $\alpha$ and $\beta$ have two distinct kernel classes of size $2$, 
namely $\{x_1,y_1\}$ and $\{x_2,y_2\}$ of $\alpha$ and $\{x'_1,y'_1\}$ and $\{x'_2,y'_2\}$ of $\beta$,  such that $x_1+x'_1$ and $x_2+x'_2$ are even; 

\item $\alpha$ and $\beta$ have kernel classes $\{x,y,z\}$ and $\{x',y',z'\}$, respectively, of size $3$ such that $x+x'$ is even. 
\end{enumerate}
Then, there exists $\sigma\in\Gamma_n$ such that $\ker(\beta)=\ker(\sigma\alpha)$. 
\end{lemma}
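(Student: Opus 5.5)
The plan is to build the required $\sigma$ explicitly as a parity-preserving even permutation. We then use Theorem \ref{gama_n}, which gives $\Gamma_n^+=\PAP_n^+\cap\A_n\subseteq\Gamma_n$. First I unwind the condition $\ker(\beta)=\ker(\sigma\alpha)$. For $u,v\in[n]$ we have $u\,\sigma\alpha=v\,\sigma\alpha$ if and only if $u\sigma$ and $v\sigma$ lie in the same kernel class of $\alpha$. So it suffices to find $\sigma\in\Gamma_n$ that maps each kernel class of $\beta$ bijectively onto a kernel class of $\alpha$. This is automatic for singletons once the non-trivial classes are matched, since $\sigma$ is a bijection.

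Next I record the parity structure. Since $\alpha,\beta\in\Sigma_n\setdif\Delta_n$ and $\r(\alpha)=\r(\beta)=n-2\leqslant n-2$, the observation after Theorem \ref{delta} says every kernel class of $\alpha$ and of $\beta$ consists of numbers of a single parity. Hence the hypotheses say the following.
\begin{itemize}
\item In situation 1, $\{x'_1,y'_1\}$ and $\{x_1,y_1\}$ are both contained in the same parity class of $[n]$, and likewise $\{x'_2,y'_2\}$ and $\{x_2,y_2\}$.
\item In situation 2, $\{x',y',z'\}$ and $\{x,y,z\}$ are both contained in the same parity class.
\end{itemize}
Consequently the union $U'$ of the non-trivial classes of $\beta$ and the union $U$ of those of $\alpha$ contain the same number of odd elements and the same number of even elements. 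The complements $[n]\setdif U'$ and $[n]\setdif U$ then also have equal numbers of odd and of even elements. So there is a bijection $\sigma_0\in\S_n$ with the following properties.
\begin{itemize}
\item It maps $\{x'_1,y'_1\}$ onto $\{x_1,y_1\}$ and $\{x'_2,y'_2\}$ onto $\{x_2,y_2\}$ (respectively, $\{x',y',z'\}$ onto $\{x,y,z\}$), respecting parity.
\item It maps the odd and even elements of $[n]\setdif U'$ onto those of $[n]\setdif U$.
\end{itemize}
This $\sigma_0$ preserves parity of every point, so $\sigma_0\in\PAP_n^+$, and $\ker(\beta)=\ker(\sigma_0\alpha)$.

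The remaining step is to fix the sign of $\sigma_0$. If $\sigma_0\in\A_n$, take $\sigma=\sigma_0$. Otherwise, put $\sigma=(x'_1,y'_1)\sigma_0$ (respectively $(x',y')\sigma_0$). The transposition swaps two elements of the same parity lying in one kernel class of $\beta$. Hence $\sigma$ is still in $\PAP_n^+$, now lies in $\A_n$, and still maps the classes of $\beta$ onto those of $\alpha$. Thus $\sigma\in\PAP_n^+\cap\A_n=\Gamma_n^+\subseteq\Gamma_n$ by Theorem \ref{gama_n}.

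The only delicate point is the counting step, namely checking that the parity counts of the complements agree. It follows directly from the fact that each pair of corresponding classes has equal size and common parity. This holds even when both classes in situation 1 have the same parity, and it does not depend on whether $n$ is odd or even.
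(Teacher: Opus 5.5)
Your proof is correct and follows essentially the same route as the paper: extend the class-matching to a parity-preserving permutation $\sigma_0\in\PAP_n^+$, then fix its sign with a transposition inside a non-trivial class so that $\sigma\in\PAP_n^+\cap\A_n=\Gamma_n^+\subseteq\Gamma_n$. The differences are minor. You multiply by $(x'_1,y'_1)$ on the left, inside a class of $\beta$, whereas the paper uses $\sigma_0(x_1,y_1)$, inside a class of $\alpha$. You also spell out the counting argument that guarantees the extension $\sigma_0$ exists, which the paper merely asserts.
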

\begin{proof}
Let us first assume that we have $1$.  As $\alpha,\beta\not\in\Delta_n$ and $x_1+x'_1$ and $x_2+x'_2$ are even, 
then $x_1,y_1,x'_1$ and $y'_1$ all have the same parity, and the same is true for $x_2,y_2,x'_2$ and $y'_2$. 
Therefore, there exists $\sigma_0\in\PAP_n^+$ that extends the partial permutation  
$\smallt{x'_1 & y'_1 & x'_2 & y'_2\\x_1 & y_1 & x_2 & y_2}$.  Let $\sigma=\sigma_0$, if $\sigma_0$ is even, and 
$\sigma=\sigma_0(x_1,y_1)$, if $\sigma_0$ is odd. Then, $\sigma\in\A_n$ and so $\sigma\in\Gamma_n$. 
Moreover, clearly, $\ker(\beta)=\ker(\sigma\alpha)$. 
If we have $2$, then $x,y,z,x',y'$ and $z'$ all have the same parity, and the same line of reasoning as in the previous case applies here as well. 
\end{proof}

\begin{theorem}\label{sigmarank}
For $n\geqslant8$,  $\Sigma_n=\langle \Delta_n, \gamma_n,\gamma'_n,\delta_n\rangle$, if $n$ is even, and 
$\Sigma_n=\langle \Delta_n, \gamma_n,\gamma'_n,\gamma''_n,\delta_n,\delta'_n\rangle$, if $n$ is odd. 
In addition,  
$\Sigma_4=\langle \Delta_4, \gamma_4\rangle$, 
$\Sigma_5=\langle \Delta_5, \gamma_5,\delta_5\rangle$,  
$\Sigma_6=\langle \Delta_6, \gamma_6,\delta_6\rangle$ and 
$\Sigma_7=\langle \Delta_7, \gamma_7,\gamma'_7,\delta_7,\delta'_7\rangle$.  
Moreover, the rank of $\Sigma_n$ is 
$$
\left\{
\begin{array}{ll}
3 & \mbox{if $n=4$}\\
6 & \mbox{if $n=5$}\\
5 & \mbox{if $n=6$}\\ 
9 & \mbox{if $n$ is odd and $n\geqslant7$}\\
6 & \mbox{if $n$ is even and $n\geqslant8$}.
\end{array}
\right.
$$
\end{theorem}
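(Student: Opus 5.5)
The plan has two parts: a generation statement and a matching lower bound, mirroring the proof of Theorem~\ref{deltarank}.

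\emph{Generation.} By Theorem~\ref{delta}, $\Sigma_n\setdif\Delta_n=\X\cap\Sigma_n$, and these are exactly the maps of rank at most $n-2$ all of whose kernel classes are parity-homogeneous. First I would handle rank $\leqslant n-3$. For $\alpha\in\X$ of rank $r\leqslant n-3$, I would factor $\alpha=\alpha_1\alpha_2$ with $\alpha_1\in\X$ of rank $r+1$ and $\alpha_2\in\Delta_n$, as in Lemma~\ref{delta1}. The idea is to split one kernel class of $\alpha$ of size $\geqslant2$ by sending one of its points $u$ to a missing image point $b_1$. Then $\alpha_2$ identifies $b_1$ with $u\alpha$ and fixes the rest; choosing $b_1,b_2$ of different parity, or adjusting by a unit, makes $\alpha_2\in\Delta_n$. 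By induction, everything reduces to $\X(\r=n-2)$.

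For rank $n-2$, $\alpha$ is of {\sc type a} (one class of size $3$) or {\sc type b} (two classes of size $2$). Any $\alpha\in\Sigma_n(\r=n-2)$ lies in the $\mathscr{R}$-class of some $\sigma\gamma$, where $\gamma$ is a representative kernel and $\sigma\in\Gamma_n$, by Lemma~\ref{sigma1}. Writing $\alpha=\sigma\gamma\beta$, with $\beta$ a suitable permutation-like correction, I need $\beta\in\Delta_n$. I would take $\beta$ of rank $n-1$, acting injectively on $\im(\gamma)$ and collapsing the two missing points onto an appropriate image point. Since $n-2\geqslant2$ missing/present parities can be arranged, $\beta$ can be chosen in $\Sigma_n(\r=n-1)$ by adjusting with a transposition, as in the proof of Theorem~\ref{delta}.

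It therefore remains to count representative kernels up to the action of $\Gamma_n$. For {\sc type a}, the class $\{x,y,z\}$ is either all odd or all even. For {\sc type b}, the two classes are odd--odd, even--even, or mixed. When $n$ is odd, $\Gamma_n$ preserves parity, giving $2+3=5$ orbits: $\gamma_n,\gamma'_n,\gamma''_n,\delta_n,\delta'_n$. When $n$ is even, $\Gamma_n^-$ swaps parities, which merges all odd with all even, and odd--odd with even--even, leaving $\delta_n,\gamma_n,\gamma'_n$. For small $n$ some types are absent; for example, $n=4$ has no triple of the same parity, and $n\leqslant 6$ has too few points for some configurations. This explains the listed small cases. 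Lemma~\ref{sigma1} shows that kernels in the same orbit give the same $\Gamma_n$-translate.

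\emph{Lower bound.} A generating set must contain $\rank(\Delta_n)$ elements of rank $\geqslant n-1$. The reason is that $\Sigma_n(\r\geqslant n-1)$ is closed under the relevant factorisations: any product landing in rank $\geqslant n-1$ uses only factors of rank $\geqslant n-1$, so the generators of rank $\geqslant n-1$ must generate $\Delta_n$. The key claim is then that each $\Gamma_n$-orbit type of kernel of rank $n-2$ requires its own generator in $\X$. Suppose $\alpha\in\X(\r=n-2)$ is written as $\alpha_1\cdots\alpha_k$. Let $\alpha_i$ be the first factor of rank $<n-1$. Then $\alpha_1\cdots\alpha_{i-1}$ has rank $\geqslant n-1$. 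If that prefix had rank $n-1$, it would merge two points of distinct parity (Proposition~\ref{n-1}), putting $\alpha$ outside $\X$. So the prefix is a unit $\sigma\in\Gamma_n$, and $\ker\alpha\supseteq\ker(\sigma\alpha_i)$ with equal rank. Hence $\ker\alpha=\ker(\sigma\alpha_i)$ and $\alpha_i\in\X(\r=n-2)$ has kernel in the same $\Gamma_n$-orbit as $\alpha$. This gives $\rank(\Sigma_n)=\rank(\Delta_n)+(\text{number of orbits})$. The values are $4+5=9$ for odd $n\geqslant9$; $5+4=9$ for $n=7$, where there are only four orbits; $3+3=6$ for even $n\geqslant8$; $4+2=6$ for $n=5$; $3+2=5$ for $n=6$; and $2+1=3$ for $n=4$.

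The main obstacle is the rank-$(n-2)$ reduction: checking that the correcting factor $\beta$ can always be chosen in $\Delta_n$ for every orbit representative. This needs the same parity bookkeeping of inversions on $X_1$ as in the proof of Theorem~\ref{delta}, and I would first try composing with a transposition of two image points to fix the parity.
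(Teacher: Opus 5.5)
Your architecture matches the paper's: one kernel representative per $\Gamma_n$-orbit in $\X(\r=n-2)$, a factorisation $\alpha=\sigma\gamma\beta$, and a lower bound via the first factor outside $\Delta_n$. Your orbit counts and final ranks are also right. The genuine gap is the step you flag yourself, and the route you propose for it cannot work. A correcting factor $\beta\in\Sigma_n$ of rank $n-1$ need not exist, and no transposition adjustment helps, because $\beta$ is forced on $\im(\gamma)$.

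Take $n=5$ and $\alpha=\smallt{1&2&3&4&5\\2&1&2&1&3}$, so $\ker\alpha=\ker\gamma_5$. Any $\sigma\in\Gamma_5$ with $\ker(\sigma\gamma_5)=\ker\alpha$ preserves parity. Hence it fixes $5$ and stabilises $\{1,3\}$ and $\{2,4\}$, so $\sigma\gamma_5=\gamma_5$ and $\beta$ must send $1\mapsto 2$, $2\mapsto 1$, $3\mapsto 3$. For $\beta$ to have rank $4$, some $m\in\{4,5\}$ must go to a point of $\{4,5\}$. Then $\beta_{|\{1,2,3,m\}}$ is an injective restriction of width $4$ with exactly one inversion, so $\beta\notin\Sigma_5$.

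The paper's remedy removes the obstacle entirely. All five representatives have image $[n-2]$, and the two points $n-1,n$ outside it have opposite parities. So let $\beta$ take the forced values on $[n-2]$ and send both $n-1$ and $n$ to a single point of $\im(\alpha)$. Then $\beta\in\T_n(\r=n-2)\setdif\X\subseteq\Delta_n$ by Theorem~\ref{delta}, with no inversion counting at all. This is the same mixed-class trick you already borrow from Lemma~\ref{delta1} for ranks $\leqslant n-3$. The paper shortcuts that step too, by using that $\T_n(\r\leqslant n-2)$ is generated by its elements of rank $n-2$.

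In the lower bound, you claim $\alpha_1\cdots\alpha_{i-1}$ has rank $\geqslant n-1$ because each factor does. That is false, since products of rank-$(n-1)$ maps can drop rank. The conclusion survives: the prefix is a product of elements of $\Sigma_n(\r\geqslant n-1)$, so it lies in $\Delta_n=\Sigma_n\setdif\X$. If it is not a unit, it already identifies two points of opposite parity, and $\alpha$ inherits this. The paper phrases this by taking $\alpha_i$ to be the first factor outside $\Delta_n$.

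Finally, for even $n$, Lemma~\ref{sigma1} only matches kernels with the same parity pattern. To merge the all-odd and all-even patterns you must first multiply by an element of $\Gamma_n^-$, e.g.\ $\sigma_n$. This is how the paper discards $\gamma''_n$ and $\delta'_n$.
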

\begin{proof}
Let $n\geqslant4$. 
In order to avoid treating small $n$ differently,   let us take, for example, $\gamma'_4=\gamma''_4=\delta_4=\delta'_4=\iota_4$,  
$\gamma'_5=\gamma''_5=\delta'_5=\iota_5$, $\gamma'_6=\gamma''_6=\iota_6$ and $\gamma''_7=\iota_7$. 
Since $\Sigma_n(\r\leqslant n-2)=\T_n(\r\leqslant n-2)=\langle\T_n(\r=n-2)\rangle$ (see e.g. \cite[Lemma 4]{Howie&McFadden:1990}),  
we just need to focus on the elements of  $\Sigma_n(\r=n-2)\setdif\Delta_n$. 

Let $\alpha\in\Sigma_n(\r=n-2)$. 
If $\ker(\alpha)=\ker(\gamma_n)$, then $\alpha=\gamma_n\beta$, with e.g. 
$$
\beta=\transf{1&2&3&\cdots&n-2&n-1&n\\1\alpha&2\alpha&5\alpha&\cdots&n\alpha&1\alpha&1\alpha}\in\Delta_n(\r=n-2),
$$ 
whence $\alpha\in\langle\Delta_n,\gamma_n\rangle$. 
If $\ker(\alpha)=\ker(\gamma'_n)$, then $\alpha=\gamma'_n\beta$, with e.g. 
$$
\beta=\transf{1&2&3&4&5&6&\cdots&n-2&n-1&n\\1\alpha&5\alpha&2\alpha&4\alpha&6\alpha&8\alpha&\cdots&n\alpha&1\alpha&1\alpha}\in\Delta_n(\r=n-2),
$$ 
whence $\alpha\in\langle\Delta_n,\gamma'_n\rangle$. 
If $\ker(\alpha)=\ker(\gamma''_n)$, then $\alpha=\gamma''_n\beta$, with e.g. 
$$
\beta=\transf{1&2&3&4&5&6&7&\cdots&n-2&n-1&n\\2\alpha&6\alpha&1\alpha&3\alpha&5\alpha&7\alpha&9\alpha&\cdots&n\alpha&1\alpha&1\alpha}\in\Delta_n(\r=n-2),
$$ 
whence $\alpha\in\langle\Delta_n,\gamma''_n\rangle$. 
If $\ker(\alpha)=\ker(\delta_n)$, then $\alpha=\delta_n\beta$, with e.g. 
$$
\beta=\transf{1&2&3&4&\cdots&n-2&n-1&n\\1\alpha&2\alpha&4\alpha&6\alpha&\cdots&n\alpha&1\alpha&1\alpha}\in\Delta_n(\r=n-2),
$$ 
whence $\alpha\in\langle\Delta_n,\delta_n\rangle$. 
If $\ker(\alpha)=\ker(\delta'_n)$, then $\alpha=\delta'_n\beta$, with e.g. 
$$
\beta=\transf{1&2&3&4&5&\cdots&n-2&n-1&n\\2\alpha&1\alpha&3\alpha&5\alpha&7\alpha&\cdots&n\alpha&1\alpha&1\alpha}\in\Delta_n(\r=n-2),
$$ 
whence $\alpha\in\langle\Delta_n,\delta'_n\rangle$. 

Now, let $\alpha\in\Sigma_n(\r=n-2)\setdif\Delta_n$. Then, $\alpha$ has exactly either two non-singular kernel classes of size $2$ or 
one non-singular kernel class of size $3$. Hence, by Lemma \ref{sigma1}, 
there exists $\gamma\in\{\gamma_n,\gamma'_n,\gamma''_n,\delta_n,\delta'_n\}$ and $\sigma\in\Gamma_n$ such that $\ker(\gamma)=\ker(\sigma\alpha)$. 
Thus, by the first part of the proof, there exists $\beta\in\Delta_n(\r=n-2)$ such that $\sigma\alpha=\gamma\beta$ and so 
$\alpha=\sigma^{-1}\gamma\beta\in\langle \Delta_n,\gamma\rangle$. 
Therefore, we obtain $\Sigma_n=\langle \Delta_n, \gamma_n,\gamma'_n,\gamma''_n,\delta_n,\delta'_n\rangle$. 

Next, suppose that $n$ is even. Then, $\sigma_n\in\Gamma_n$. 
For $n\geqslant8$, it is clear that $\ker(\gamma'_n)=\ker(\sigma_n\gamma''_n)$, whence $\sigma_n\gamma''_n=\gamma'_n\beta$, for some $\beta\in\Delta_n(\r=n-2)$, and so $\gamma''_n=\sigma_n^{-1}\gamma'_n\beta\in\langle\Delta_n,\gamma'_n\rangle$. 
Similarly, for $n\geqslant6$, $\ker(\delta_n)=\ker(\sigma_n\delta'_n)$, whence $\sigma_n\delta'_n=\delta_n\beta'$, for some $\beta'\in\Delta_n(\r=n-2)$, 
and so $\delta'_n=\sigma_n^{-1}\delta_n\beta'\in\langle\Delta_n,\delta_n\rangle$. 
Therefore, in this case, we get $\Sigma_n=\langle \Delta_n, \gamma_n,\gamma'_n,\delta_n\rangle$. 

At this point, it remains to determine the rank of $\Sigma_n$. 
Clearly, in this context, it is sufficient to calculate how many elements of $\Sigma_n(\r=n-2)\setdif\Delta_n$ we need to add to $\Delta_n$ in order to obtain a set of generators of $\Sigma_n$. 
Let $C$ be a set of generators of $\Sigma_n$. Let $\gamma\in\Sigma_n(\r=n-2)\setdif\Delta_n$. 
Let $\alpha_1,\ldots,\alpha_k\in C$ ($k\in\N$) be such that $\gamma=\alpha_1\cdots\alpha_k$. 
Since $\Sigma_n(\r\geqslant n-1)\subseteq\Delta_n$, there exists $1\leqslant i\leqslant k$ such that $\alpha=\alpha_1\cdots\alpha_{i-1}\in\Delta_n$ 
($\alpha$ can be the identity, e.g. if $i=1$) and $\alpha_i\in\Sigma_n(\r=n-2)\setdif\Delta_n$. 
If $\alpha\not\in\Gamma_n$, then there exist $x,y\in[n]$ such that $x\alpha=y\alpha$ and $x+y$ is odd, 
whence $x\gamma=y\gamma$, which is a contradiction. Thus, $\alpha\in\Gamma_n$. 
As $\ker(\alpha\alpha_i)\subseteq\ker(\gamma)$ and $\r(\alpha\alpha_i)=n-2=\r(\gamma)$, we have $\ker(\alpha\alpha_i)=\ker(\gamma)$. 
First, suppose that $\gamma$ has two distinct kernel classes $\{x_1,y_1\}$ and $\{x_2,y_2\}$ of size $2$. 
Then, $\alpha_i$ also have two distinct kernel classes of size $2$, namely $\{x_1\alpha,y_1\alpha\}$ and $\{x_2\alpha,y_2\alpha\}$. 
Recall that $x_1+y_1$ and $x_2+y_2$ must be even and, for $x\in[n]$,  if $\alpha\in\PAP_n^+$  [respectively, $\alpha\in\PAP_n^-$], 
then $x$ is odd if and only if $x\alpha$ is odd  [respectively, even]. Hence, if $\alpha\in\PAP_n^+$ [respectively, $\alpha\in\PAP_n^-$], then:
\begin{itemize}
\item[(1)] $x_1,y_1,x_2$ and $y_2$ odd ($n\geqslant7$) implies $x_1\alpha,y_1\alpha, x_2\alpha$ and $y_2\alpha$ odd [respectively, even];
\item[(2)] $x_1,y_1,x_2$ and $y_2$ even ($n\geqslant8$) implies $x_1\alpha,y_1\alpha,x_2\alpha$ and $y_2\alpha$ even [respectively, odd];
\item[(3)] $x_1$ and $y_1$ odd and $x_2$ and $y_2$ even ($n\geqslant4$) implies $x_1\alpha$ and $y_1\alpha$ odd [respectively, even] and $x_2\alpha$ and $y_2\alpha$ even [respectively, odd]. 
\end{itemize}
On the other hand, suppose that $\gamma$ has a kernel class $\{x,y,z\}$ of size $3$. 
Then, $x,y$ and $z$ have the same parity and $\{x\alpha,y\alpha,z\alpha\}$ is a kernel class of $\alpha_i$ of size $3$. So, 
if $\alpha\in\PAP_n^+$ [respectively, $\alpha\in\PAP_n^-$], then
\begin{itemize}
\item[(4)] $x,y$ and $z$ odd ($n\geqslant5$) implies $x\alpha,y\alpha$ and $y_2\alpha$ odd [respectively, even];
\item[(5)] $x,y$ and $z$ even ($n\geqslant6$) implies $x\alpha,y\alpha$ and $y_2\alpha$ even [respectively, odd]. 
\end{itemize}
Therefore, if $n$ is odd and $n\geqslant 9$, then $C$ must have at least five different elements of $\Sigma_n(\r=n-2)\setdif\Delta_n$, 
since $\PAP_n^-=\emptyset$. For $n=5$ only situations (3) and (5) are possible, whence $C$ must have at least two different elements of 
$\Sigma_5(\r=4)\setdif\Delta_5$, and for $n=7$ only situation (2) is not possible, whence $C$ must have at least four different elements of $\Sigma_7(\r=5)\setdif\Delta_7$. 
In contrast, if $n$ is even, then $\PAP_n^-\cap\Gamma_n\neq\emptyset$ and, for $n\geqslant8$, 
we can only guarantee that $C$ must have at least three different elements of $\Sigma_n(\r=n-2)\setdif\Delta_n$. 
This number reduces to one, if $n=4$, since only situation (1) is possible, and to two, if $n=6$, since situations (1) and (2) are not possible. 
In light of Theorem \ref{deltarank}, the proof is now complete.
\end{proof}

\section*{Acknowledgment}

This work is funded by national funds through the FCT - Funda\c c\~ao para a Ci\^encia e a Tecnologia, I.P., 
under the scope of the projects UID/00297/2025 (https://doi.org/10.54499/UID/00297/2025) and 
UID/PRR/00297/2025 (https://doi.org/10.54499/UID/PRR/00297/2025) (Center for Mathematics and Applications \-- NOVA Math).  

\section*{Declarations} The author declares no conflicts of interest.

\end{document}